\newcommand{\norme}[1]{\left\Vert #1\right\Vert}
\newtheorem{Lemme}{Lemma}[section]
\newtheorem{Prop}{Proposition}[section]  
\newtheorem{Rmq}{Remark}[section]
\newtheorem{Thm}{Theorem}[section]
\newtheorem{Cor}{Corollary}[section]
\theoremstyle{remark}
\newtheorem*{Proof}{Proof}
\newcommand{\be}{\begin{equation}}
\newcommand{\ee}{\end{equation}}
\newcommand{\ba}{\begin{array}}
\newcommand{\ea}{\end{array}}
\newcommand{\bea}{\begin{eqnarray}}
\newcommand{\eea}{\end{eqnarray}}
\newcommand{\bee}{\begin{eqnarray*}}
\newcommand{\eee}{\end{eqnarray*}}
\newcommand{\B} {\mathbb{B}}
\newcommand{\N} {\mathbb{N}}
\newcommand{\R} {\mathbb{R}}    
\newcommand{\Z} {\mathbb{Z}} 
\newcommand{\cC} {\mathcal{C}}     
\newcommand{\cD} {\mathcal{D}}   
\newcommand{\cH} {\mathcal{H}}     
\newcommand{\cL} {\mathcal{L}}      
\newcommand{\cR} {\mathcal{R}}
\def \with {\quad\!\hbox{with}\!\quad}
\def \andf {\quad\!\hbox{and}\!\quad}
\def\dn{\delta\!n}
\def\du{\delta\!u}
\def\dv{\delta\!v}
\def\dR{\delta\!R}
\def\dU{\delta\!U}
\def\dV{\delta\!V}
\def\d{\partial}
\def\wt{\widetilde}
\def\ddj{\dot\Delta_j}
\begin{document}
\title[Partially dissipative one-dimensional hyperbolic systems]{Partially dissipative one-dimensional
hyperbolic systems in the critical regularity setting, and applications
}
\author{Timoth\'ee Crin-Barat and Rapha\"el Danchin}

\subjclass[2010]{35Q35; 76N10}
\keywords{Hyperbolic systems, critical regularity, time decay, partially dissipative}

\begin{abstract}
   Here we develop a method for investigating 
   global strong  solutions of  partially dissipative 
   hyperbolic systems in the critical regularity setting.
         Compared to the recent  works by  Kawashima and Xu, we use
   \emph{hybrid} Besov spaces with different regularity exponent in low and high 
   frequency. This allows to consider more general data  and  to track the exact dependency on the dissipation parameter for the solution. 
  Our approach enables us to go  beyond the $L^2$ framework in the treatment of  the low frequencies of the solution, which 
  is totally new, to the best of our knowledge. 
   
  Focus is on the one-dimensional setting
 (the multi-dimensional case will be considered in a forthcoming paper)
 and, for expository purpose, the first part of the paper is 
 devoted to a toy model that may be seen as a simplification of the compressible 
 Euler system with damping. 
 More elaborated systems (including the compressible Euler system with 
 general increasing pressure law) are considered at the end  of the paper. 
   \end{abstract}

\maketitle

\section*{Introduction}

The study of the global existence issue for so-called partially dissipative hyperbolic systems of
balance laws goes back to the seminal work  of Kawashima \cite{Kawa1}. Recall that  a general  $n$-component systems of balance laws in $\mathbb{R}^d$ reads:
\begin{equation}
\frac{\partial w}{\partial t} + \sum_{j=1}^d\frac{\partial F_j(w)}{\partial x_j}=Q(w). \label{GEQ}
\end{equation}
Here  the unknown $w=w(t,x)$ with $t\in\R^+$ and $x\in\mathbb{R}^d$  is  valued in  an open convex subset $\mathcal{O}_w$
of $\mathbb{R}^n$ and $Q, F_j :\mathbb{R}^n\rightarrow \mathcal{O}_w$ are given $n$-vector valued smooth functions on~$\mathcal{O}_w$. 
\smallbreak
It is well known that  classical  systems of conservation laws (that is with $Q(w)=0$) 
supplemented with 
smooth data  admit local-in-time strong solutions that may develop singularities (shock waves) in finite time even if the initial data are small  perturbations of a constant solution  (see for instance the works by Majda in \cite{Majda} and Serre in \cite{Serre}).  
A  sufficient condition for  global existence  for small perturbations of a constant
solution $\bar w$ of \eqref{GEQ} is 
the \emph{total dissipation hypothesis}, namely the damping (or dissipation)  term $Q(w)$ acts directly on   each component of the system, making
the whole solution  to tend to $\bar w$  exponentially fast. 
However, in most evolutionary systems coming from physics, that condition   is not verified, and even though global-in-time strong solutions do exist, exponential decay is very unlikely.
 A more reasonable 
assumption is that dissipation acts 
 only on some  components of the system. After suitable change of coordinates, we may write:\begin{equation}
Q(w)=\left(\begin{matrix} 0\\q(w) \end{matrix}\right) \label{PartDissip}
\end{equation} where $0\in \mathbb{R}^{n_1}, q(w)\in \mathbb{R}^{n_2}$, $n_1,n_2\in\mathbb{N}$ and $n_1+n_2=n.$ 
This so-called \emph{partial dissipation hypothesis} arises in many applications such as gas dynamics or numerical simulation of conservation laws by relaxation scheme. A well
known example is the damped compressible Euler system for isentropic flows that we will be investigated at the end
of the paper. For this system, the works by Wang and Yang \cite{Wang} and Sideris, Thomases and Wang \cite{Sideris} pointed out that the dissipative mechanism, albeit  only present in the velocity equation, can prevent the formation of singularities that would occur if $Q\equiv0.$
\smallbreak
Looking for conditions 
on the systems of the form \eqref{GEQ}-\eqref{PartDissip}  guaranteeing   global 
existence of strong solutions for small perturbations of a constant solution $\bar w$ 
  goes back to the paper of Shizuta and Kawashima \cite{SK}, the thesis of Kawashima \cite{Kawa1} and, more recently, to the paper of Yong \cite{Yong}.
  Their researches reveal  the importance of a rather explicit   linear stability criterion, 
  that is nowadays called  the  (SK) (for \textit{Shizuta-Kawashima})  condition 
  and of  the existence  of an entropy that  provides a suitable symmetrisation of the system.
  Roughly speaking, (SK) condition ensures that the 
  partial damping provided by \eqref{PartDissip} acts on  all the components
  of the solution, although indirectly, so that all the solutions of \eqref{GEQ} emanating 
  from small perturbations of $\bar w$  eventually tend to $\bar w,$ while
 the paper by Yong provides tools to get quantitative estimates on the solutions when $Q(\bar{w})=0$.
 As observed by  Bianchini, Hanouzet and Natalini \cite{BHN}, in many   situations, 
   a careful analysis of the Green kernel of the linearized system about $\bar w$
   allows to get explicit (and optimal) algebraic rates of convergence in $L^p$ of smooth global solutions to $\bar w.$
 Let us finally  mention  that a more general  approach
 has been proposed  by  Beauchard and Zuazua in \cite{BZ}, that 
 allows to handle  partially dissipative systems that need not satisfy the (SK) condition. 
\smallbreak
 Recently,  Kawashima and Xu in \cite{XK1} and \cite{XK2} extended all the prior works
on partially dissipative hyperbolic systems satisfying the (SK) and entropy conditions (including  the compressible Euler system with a damping term)
 to `critical'   non-homogeneous Besov spaces of  $L^2$ type. 
 To obtain their results, they symmetrized the system thanks to the entropy hypothesis, applied a frequency localization argument relying on the Littlewood-Paley decomposition and used new properties concerning Chemin-Lerner's spaces.
They took advantage of the  equivalence between the condition (SK) and the existence of a compensating function so as to 
 to exhibit global-in-time $L^2$ integrability properties of all the components of the system. 
\medbreak
The present paper focuses on the particular  situation where  the space dimension is $d=1$ and the number of components is $n=2$ (the multi-dimensional case will be  investigated in a forthcoming  paper  \cite{CBD2} for the  whole class of partially dissipative systems verifying the (SK) condition).
Our goal is to propose a method and a functional framework with \emph{different} regularities for low and high frequencies. 
For the high frequencies, we do not really have the choice as it is known that the optimal regularity for local well-posedness
in the context of general quasilinear hyperbolic systems,  is given by the `critical'   Besov space $\B^{\frac32}_{2,1}.$ The novelty here is that we propose to look 
at the low frequencies of the solution in another space, not necessarily related to $L^2.$  
The advantage is not only that we will be able to consider a larger class of initial data that
may be less decaying at infinity,  but also  that  one can  easily keep  track of  the dependency of the solution 
with respect to the dissipation coefficient, and thus have some informations on the 
large dissipation asymptotics. 
Various considerations lead us to think that a suitable  space for low frequencies is 
the homogeneous Besov space $\dot\B^{\frac1p}_{p,1}$ (with, possibly, $p>2$) that 
corresponds to the critical embedding in $L^\infty.$  
\smallbreak
For expository purpose, we spend most of the paper implementing
our method on a simple `toy model'  that may be seen as a simplification of the
one-dimensional compressible Euler system with damping,  and pressure law $P(\rho)=\frac12\rho^2$, 
namely
$$ \left\{\begin{array}{lll}\d_tu+v\d_xu+\d_xv=0 \quad&\hbox{in}\quad& \R^+\times\R,\\
 \d_tv+v\d_xv+\d_xu +\lambda v=0 \quad&\hbox{in}\quad& \R^+\times\R,\\
(u,v)|_{t=0}=(u_0,v_0).  \quad&\hbox{on}\quad&\R\end{array}\right.\leqno(TM_\lambda)
$$ 
Above, the unknown   $u$  may be seen as the discrepancy to the reference density normalized to $1$,
(then,  the first equation is a simplification of the mass balance), while  the   unknown $v$ stands for the velocity 
of the fluid, and the second  equation corresponds to the evolution of velocity with a friction term of magnitude $\lambda>0$ (which could also be interpreted as a relaxation parameter).  
\smallbreak
In order to have a robust method that can be adapted to more involved systems,  we shall not 
compute explicitly the solution of
 the linearized system $(TM_\lambda)$ about $(0,0),$  but rather use modified energy arguments 
(different from those of S. Kawashima in his thesis \cite{Kawa1}) and suitable change of unknowns. 
More specifically,  we will introduce a `modified' velocity that plays the same role as  the `viscous effective flux'  in the works of Hoff \cite{Hoff} and, more recently, of Haspot  \cite{Boris} dedicated to the compressible Navier-Stokes equations.

Our approach will enable us to obtain more accurate estimates and a weaker smallness condition than
 in prior works (in particular \cite{Kawa1,Yong,BZ,XK1D}). We will see that it is enough to 
 assume that the low frequencies of the data have Besov regularity for some Lebesgue index  that may be greater than $2.$
 Also, we will improve the decay that was obtained for the compressible Euler system with damping in \cite{XK1D} and,  adapting an idea from Xu and Xin in \cite{XuXin} for the compressible Navier-Stokes system
will enable us to  discard the additional smallness assumption on the low frequencies that is usually required to obtain the decay estimates. 
  \medbreak
 The rest of the paper unfolds as follow. 
 In  Section \ref{s:main},  we present our main results for $(TM_\lambda),$   namely the global existence of a solution corresponding to  small data with optimal estimates with respect to 
 the dissipation coefficient, and time decay estimates. 
 In the next section, we focus on the particular case of data with regularity in Besov spaces built on $L^2,$ and prove 
  global existence in this case, as well as the time decay estimates. The method we here propose is different than the one for the general 
 case, and is more easily extendable to the multi-dimensional setting. 
 In Section \ref{s:Lp}, we propose another method that allows to get our global existence result for a larger class of data, not necessarily in $L^2$ type spaces. 
The next two sections are devoted to  adapting our  results, first for the isentropic Euler system with damping, 
and next for a  general class of one-dimensional systems of two conservations laws, with partial damping. 
Some technical lemmas are proved in Appendix. 

\medbreak\noindent{\bf Acknowledgments.}  
 The second author is partially  supported  by the ANR project INFAMIE (ANR-15-CE40-0011).


\section{Main results}\label{s:main}

Before stating the main results,  we need to introduce a few notations.
First, throughout the paper, we fix a homogeneous  Littlewood-Paley decomposition $(\ddj)_{j\in\Z}$
that is defined  by 
$$\ddj\triangleq\varphi(2^{-j}D)\with \varphi(\xi)\triangleq \chi(\xi/2)-\chi(\xi)$$
where $\chi$ stands for a  smooth function  with range in $[0,1],$ supported in  $]-4/3,4/3[$ and
such that $\chi\equiv1$ on $[-3/4,3/4].$ 
We further set 
$$\dot S_j\triangleq \chi(2^{-j}D) \quad\hbox{for all }\ j\in\Z$$
and define  $\mathcal{S}'_h$ to be the set of tempered distributions $z$  such 
that $\dot S_jz\to0$ uniformly when $j\to-\infty.$
\smallbreak
Following  \cite{HJR}, we introduce the  homogeneous Besov semi-norms:
$$\|z\|_{\dot\B^s_{p,r}}\triangleq \bigl\| 2^{js}\|\ddj z\|_{L^p(\R)}\bigr\|_{\ell^r(\Z)},$$
then  define the homogeneous Besov spaces $\dot\B^s_{p,r}$ (for any $s\in\R$ and $(p,r)\in[1,\infty]^2$)
 to be the subset of $z$ in  $\mathcal{S}'_h$ such that $\|z\|_{\dot\B^s_{p,r}}$ is finite. 
\smallbreak
To any element $z$ of $\mathcal{S}'_h,$ we  associate its low and 
high frequency parts with respect to some fixed  threshold  $J_0\in\Z,$ through 
 $$ z^{\ell}\triangleq \sum_{j\leq J_0}\dot{\Delta}_jz= \dot S_{J_0+1}z\andf  z^h\triangleq\sum_{j> J_0}\dot{\Delta}_jz=({\rm Id}-\dot S_{J_0+1})z.$$
In order to emphasize the dependency of the notation with respect to the threshold 
parameter $J_0,$ we use sometimes  the notation $z^{\ell,J_0}$ and $z^{h,J_0}.$ 
Likewise, we  set\footnote{For technical reasons, we need
a small overlap between low and high frequencies.}
if $r<\infty$ 
$$ \norme{z}^{\ell,J_0}_{\dot{\mathbb{B}}^{s}_{p,r}}\triangleq \biggl(\sum_{j\leq J_0}(2^{js}\norme{\dot{\Delta}_jz}_{L^p})^r\biggr)^{\frac{1}{r}} \andf
\norme{z}^{h,J_0}_{\dot{\mathbb{B}}^{s}_{p,r}}\triangleq \biggl(\sum_{j\geq J_0}(2^{js}\norme{\dot{\Delta}_jz}_{L^p})^r\biggr)^{\frac{1}{r}}\cdotp$$ 
Whenever the value of $J_0$ is clear from the context, we shall only write $\norme{z}^\ell_{\dot{\mathbb{B}}^{s}_{p,r}}$
\smallbreak
For any Banach space $X,$ index $\rho$ in $[1,\infty]$  and time $T\in[0,\infty],$ we use the notation 
$\|z\|_{L_T^\rho(X)}\triangleq  \bigl\| \|z\|_{X}\bigr\|_{L^\rho(0,T)}.$
If $T=+\infty$, then we  just  write $\|z\|_{L^\rho(X)}.$
In the case where $z$ has $n$ components $z_j$ in $X,$ we 
slightly abusively keep the notation  $\norme{z}$ to 
mean $\sum_{j\in\{1,\cdots,n\}} \norme{z_j}_X$. 
\smallbreak
Throughout the paper, $C>0$ designates a generic harmless constant, the 
value of which depends on the context. We use the notation 
$p'$ for the conjugate Lebesgue exponent of $p.$ Finally,  we denote by $(c_j)_{j\in\Z}$
nonnegative sequences such that  $\sum_{j\in\Z} c_j=1.$
\medbreak
We can now state our main global existence  result for $(TM_\lambda).$
\begin{Thm} \label{ThmExistLp}
Let  $2\leq p\leq4$. There exist $k=k(p)\in\Z$ and $c_0=c_0(p)>0$ such that for $J_\lambda\triangleq\left\lfloor\rm log_2\lambda\right\rfloor+k,$ if we assume that
$u_0^{\ell,J_\lambda},v_0^{\ell,J_\lambda}\in{\dot{\mathbb{B}}^{\frac{1}{p}}_{p,1}}$ and $u_0^{h,J_\lambda},v_0^{h,J_\lambda}\in{\dot{\mathbb{B}}^{\frac{3}{2}}_{2,1}}$ with
$$ \norme{(u_0,v_0)}^{\ell,J_\lambda}_{\dot{\mathbb{B}}^{\frac{1}{p}}_{p,1}}+ \lambda^{-1}\norme{(u_0,v_0)}^{h,J_\lambda}_{\dot{\mathbb{B}}^{\frac{3}{2}}_{2,1}} \leq c_0,
$$ then System $(TM_\lambda)$ admits a unique global solution $(u,v)$ in the space $E_p^{J_\lambda}$ defined by 
\begin{eqnarray*}
&&u^{\ell,J_\lambda}\in \mathcal{C}_b(\R^+;\dot{\mathbb{B}}^{\frac{1}{p}}_{p,1})\cap L^1(\mathbb{R}^+,\dot{\mathbb{B}}^{\frac{1}{p}+2}_{p,1}), \;\;\;\; u^{h,J_\lambda}\in \mathcal{C}_b(\R^+;\dot{\mathbb{B}}^{\frac{3}{2}}_{2,1})\cap L^1(\mathbb{R}^+,\dot{\mathbb{B}}^{\frac{3}{2}}_{2,1}), \\ &&
v^{\ell,J_\lambda}\in \mathcal{C}_b(\R^+;\dot{\mathbb{B}}^{\frac{1}{p}}_{p,1})\cap L^1(\mathbb{R}^+,\dot{\mathbb{B}}^{\frac{1}{p}+1}_{p,1}), \;\;\;\; v^{h,J_\lambda}\in \mathcal{C}_b(\R^+;\dot{\mathbb{B}}^{\frac{3}{2}}_{2,1})\cap L^1(\mathbb{R}^+,\dot{\mathbb{B}}^{\frac{3}{2}}_{2,1})  \\&& 
\lambda v+\d_xu \in  L^1(\mathbb{R}^+,\dot{\mathbb{B}}^{\frac{1}{p}}_{p,1}) \andf v\in L^2(\mathbb{R}^+,\dot{\mathbb{B}}^{\frac{1}{p}}_{p,1}).
\end{eqnarray*}
Moreover we have the following a priori estimate:
$$X_{p,\lambda}(t)\lesssim\norme{(u_0,v_0)}^{\ell,J_\lambda}_{\dot{\mathbb{B}}^{\frac{1}{p}}_{p,1}}+ \lambda^{-1}\norme{(u_0,v_0)}^{h,J_\lambda}_{\dot{\mathbb{B}}^{\frac{3}{2}}_{2,1}}
\quad\hbox{for all } t\geq 0,$$  where 
$$\displaylines{
X_{p,\lambda}(t)\triangleq\norme{(u,v)}^{\ell,J_\lambda}_{L^\infty_t(\dot{\mathbb{B}}^{\frac{1}{p}}_{p,1})}+\lambda^{-1}\norme{(u,v)}^{h,J_\lambda}_{L^\infty_t(\dot{\mathbb{B}}^{\frac{3}{2}}_{2,1})}
+\lambda^{-1}\norme{u}^{\ell,J_\lambda}_{L^1_t(\dot{\mathbb{B}}^{\frac{1}{p}+2}_{p,1})}+\norme{(u,v)}^{h,J_\lambda}_{L^1_t(\dot{\mathbb{B}}^{\frac{3}{2}}_{2,1})}
\hfill\cr\hfill
+\norme{\lambda v+\d_xu}_{L^1_t(\dot{\mathbb{B}}^{\frac{1}{p}}_{p,1})}+\lambda^{\frac12}\norme{v}_{L^2_t(\dot{\mathbb{B}}^{\frac{1}{p}}_{p,1})}.}$$
\end{Thm}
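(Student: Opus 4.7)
The plan is to establish the estimate encoded in $X_{p,\lambda}$ dyadic block by dyadic block, and then close the nonlinear problem by a continuation argument from a local-in-time solution in $\dot\B^{\frac32}_{2,1}$. The heart of the matter, inspired by the Hoff--Haspot viscous effective flux in compressible Navier--Stokes theory, is the \emph{effective unknown}
$$w := \lambda v + \d_x u.$$
A direct computation on the linear part of $(TM_\lambda)$ yields
$$\d_t u - \lambda^{-1}\d_x^2 u = -\lambda^{-1}\d_x w \andf \d_t w + \lambda w = -\d_x^2 v,$$
so that at frequencies $2^j\ll\lambda$ one reads a parabolic equation for $u$ with effective viscosity $\lambda^{-1}$ and an exponentially damped equation for $w$. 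This is precisely what the $L^1_t$ norms in $X_{p,\lambda}$ encode: the sharp $\lambda^{-1}$ in front of $\norme{u}^{\ell,J_\lambda}_{L^1_t(\dot\B^{\frac1p+2}_{p,1})}$ and the absence of weight on $\norme{\lambda v+\d_x u}_{L^1_t(\dot\B^{\frac1p}_{p,1})}$. The threshold $J_\lambda\approx\log_2\lambda$ separates the regime where damping dominates from the one where the spatial derivative does.

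For the low-frequency regime $j\leq J_\lambda$ I would localize the $(u,w)$ system with $\ddj$ and run an $L^p$ energy method on each block, exploiting the coercivity of $-\d_x^2$ on spectrally localized functions via the standard Bernstein-based maximal regularity argument; the cross-coupling term $-\d_x^2 v$ in the equation for $w$ is controlled using the last term of $X_{p,\lambda}$ together with the damping, which is exactly where the $\lambda^{\frac12}$ weight on $\norme{v}_{L^2_t(\dot\B^{\frac1p}_{p,1})}$ comes from by interpolation. For the high-frequency regime $j>J_\lambda$, the classical Shizuta--Kawashima $L^2$ strategy applies: introduce a Lyapunov functional
$$\cL_j^2 := \norme{\ddj u}_{L^2}^2+\norme{\ddj v}_{L^2}^2+\alpha\,\lambda^{-1}\!\int_{\R}\ddj u\,\d_x\ddj v\,dx$$
for a small $\alpha>0$, equivalent to $\norme{\ddj(u,v)}_{L^2}^2$, and satisfying $\frac{d}{dt}\cL_j^2+c\lambda\,\cL_j^2\lesssim\mathrm{source}$. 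Summing in $j$ with the $\ell^1$ weight and using $2^j>\lambda$ yields exponential decay of rate $\lambda$, producing both the $\lambda^{-1}$ factor in front of $\norme{(u,v)}^{h,J_\lambda}_{L^\infty_t(\dot\B^{\frac32}_{2,1})}$ and the $L^1_t$ control on the high-frequency part.

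It then remains to estimate the convection terms $v\d_x u$ and $v\d_x v$ in the norms of $X_{p,\lambda}$. I would split each product via Bony's paraproduct/remainder decomposition and further cut every factor into low and high frequencies with respect to $J_\lambda$, so as to pair each factor with the norm it naturally lives in. The subtle entries are the low-frequency outputs, because a factor $v^\ell$ must be measured in an $L^p$-type norm while $v^h$ lives in $L^2$-type spaces; to make the two pieces match at the interface frequency $J_\lambda$ one needs the Bernstein embedding $\dot\B^{\frac32}_{2,1}\hookrightarrow\dot\B^{\frac1p+1}_{p,1}$ at high frequencies, which holds precisely when $p\leq 4$, and this is the origin of the restriction on $p$. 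Commutator estimates of the form $\norme{[\ddj,v]\d_x u}_{L^p}$ are needed so as not to lose a derivative on $v$ in the transport structure, and the weights of $\lambda$ in $X_{p,\lambda}$ have been calibrated so that every nonlinear contribution produces a term quadratic in $X_{p,\lambda}$.

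Putting all this together yields an inequality of the form
$$X_{p,\lambda}(t)\leq C\bigl(\norme{(u_0,v_0)}^{\ell,J_\lambda}_{\dot\B^{\frac1p}_{p,1}}+\lambda^{-1}\norme{(u_0,v_0)}^{h,J_\lambda}_{\dot\B^{\frac32}_{2,1}}\bigr)+C\,X_{p,\lambda}(t)^2,$$
which, combined with the smallness assumption and a local well-posedness statement in $\dot\B^{\frac32}_{2,1}$, closes the argument by a standard bootstrap and continuation. The main obstacle will be the bookkeeping in the last step: arranging the paraproduct decomposition so that the $\lambda$-weights are respected, checking that the commutator losses are exactly absorbed by the parabolic gain $\lambda^{-1}\norme{u}^{\ell,J_\lambda}_{L^1_t(\dot\B^{\frac1p+2}_{p,1})}$, and verifying that the smallness threshold $c_0$ does \emph{not} deteriorate as $\lambda\to\infty$.
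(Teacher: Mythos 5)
Your global architecture — introduce the effective unknown $\lambda v+\d_xu$ to isolate a damped mode, run a parabolic $L^p$ estimate on $u$ at low frequencies, close the high frequencies with a weighted $L^2$ energy functional, treat the nonlinearities with Bony's decomposition and commutator estimates, and bootstrap — is precisely what the paper does, and your identification of the frequency threshold $J_\lambda\approx\log_2\lambda$ and of $p\le 4$ as coming from the Bernstein embedding $\dot\B^{3/2}_{2,1}\hookrightarrow\dot\B^{1/p+1}_{p,1}$ at the interface is also correct. There are, however, two concrete gaps.

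First, you never perform the rescaling $(u,v)(t,x)=(\tilde u,\tilde v)(\lambda t,\lambda x)$, which the paper invokes at the outset and which turns $(TM_\lambda)$ into $(TM_1)$ with threshold a fixed $J_0$ independent of $\lambda$; since the Besov seminorms are homogeneous, undoing the rescaling at the end produces exactly the $\lambda$-weights appearing in $X_{p,\lambda}$. This is not merely a cosmetic simplification: you yourself flag ``verifying that the smallness threshold $c_0$ does not deteriorate as $\lambda\to\infty$'' as your main obstacle, and the rescaling is what eliminates that obstacle entirely. Carrying the factors of $\lambda$ through every paraproduct and commutator estimate by hand is precisely the kind of bookkeeping that the paper avoids, and your proposal does not explain how to resolve it.

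Second, your proposed high-frequency Lyapunov functional $\cL_j^2 = \norme{\ddj u}_{L^2}^2+\norme{\ddj v}_{L^2}^2+\alpha\lambda^{-1}\!\int\ddj u\,\d_x\ddj v$ has two problems. The sign of the cross term is wrong: with $\alpha>0$ and $\int u_j\,\d_xv_j=-\int v_j\,\d_xu_j$, differentiating in time produces $-\alpha\lambda^{-1}\norme{\d_xu_j}_{L^2}^2$, i.e.\ anti-dissipation on $u$; the paper's cross term is $\int v_j\,\d_xu_j$, which gives the favourable $+\norme{\d_xu_j}_{L^2}^2$. More fundamentally, even after fixing the sign, a functional built on $\norme{u_j}_{L^2}^2+\norme{v_j}_{L^2}^2$ cannot yield the exponential rate claimed: the time derivative of the cross term produces $-\alpha\lambda^{-1}\norme{\d_xv_j}_{L^2}^2\approx-\alpha\lambda^{-1}2^{2j}\norme{v_j}_{L^2}^2$, a defect that grows without bound in $j$ and eventually overwhelms the fixed damping $\lambda\norme{v_j}_{L^2}^2$. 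This is exactly why the paper replaces the block $L^2$ norms by \emph{derivative} norms at high frequencies, using $\wt\cL_j^2=\norme{(\d_xu_j,\d_xv_j)}_{L^2}^2+\int v_j\,\d_xu_j$ (Proposition~\ref{PropHfLp}); with that functional the $\d_x$ cancellation $\int\d_xu_j\,\d_x^2v_j+\int\d_xv_j\,\d_x^2u_j=0$ removes the problematic term and the dissipation is uniform in $j\ge J_0$. Finally, your sketch stops at the a priori inequality: the statement asks for a unique solution in $E_p^{J_\lambda}$ with data that are \emph{not} in $\dot\B^{3/2}_{2,1}$, and the paper devotes substantial work to a truncation/compactness construction of the solution and to a uniqueness argument in the weaker space $F_p(T)=\{z:z\in\cC([0,T];\dot\B^{2/p-1/2}_{p,1}),\,z^h\in\cC([0,T];\dot\B^{1/2}_{2,1})\}$, neither of which your proposal addresses.
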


\begin{Rmq}  Somehow, the function $\lambda v+\d_xu$ may be seen as a damped mode of the system, 
which explains its better  time integrability.   This   is actually
 the key to closing  the estimates globally in time,  and this enables us to  prove similar results for more general systems (see Sections \ref{s:Euler} and \ref{s:general}).
\end{Rmq}
\begin{Rmq}
 Kawashima and Xu in \cite{XK1} obtained a result in  critical
 nonhomogeneous Besov spaces  built on $L^2$ 
 for a  class of system containing $(TM_\lambda).$
In their functional setting however, it seems difficult  to track  the exact dependency of the smallness condition and of 
the estimates with respect to the damping parameter $\lambda.$
Furthermore, whether  a $L^p$ approach may be performed for the whole classe of systems
that is considered therein, is unclear. 
\end{Rmq}
\begin{Rmq}  In the $L^2$ case, the method we here propose is robust enough to be adapted to higher dimension and to 
systems with more components,  see \cite{Bifluid} and \cite{CBD2}.
\end{Rmq} 
\begin{Rmq}  In  Section \ref{s:Euler} a statement similar to the above 
one is obtained  for  the isentropic compressible Euler system with a damping term in the velocity equation. To our knowledge, it is the first result 
(partially) in the $L^p$ setting for this system. Obtaining a similar result in higher dimension is  a work in progress.
\end{Rmq}
The above  theorem gives us for free some insight on 
the diffusive relaxation limit of $(TM_\lambda)$ 
in the case of fixed initial 
data\footnote{We actually expect our method
to be appropriate for investigating the connections 
between  the compressible Euler system and  the porous media equation,  in the spirit
of \cite{Junca,LC,XK1E}. This is a work in progress.}.
\begin{Cor} \label{CorLambda}Under the hypotheses of  Theorem \ref{ThmExistLp}, we 
 have $u\to u_0$ and $v\to 0$ when $\lambda$ goes to infinity. 
 More precisely, 
$$\norme{v}_{L^2(\dot{\mathbb{B}}^{\frac{1}{p}}_{p,1})}\leq Cc_0\lambda^{-1/2}\andf
\|u(t)-u_0\|_{\dot\B^0_{p,1}}\leq C c_0\biggl(\frac t\lambda\biggr)^{\frac1{2p}}\cdotp$$
\end{Cor}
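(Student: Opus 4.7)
The first inequality is essentially immediate from Theorem \ref{ThmExistLp}: the quantity $\lambda^{\frac12}\|v\|_{L^2_t(\dot\B^{\frac1p}_{p,1})}$ is one of the summands of $X_{p,\lambda}(t)$, which is bounded by $Cc_0$ uniformly in $t$. Letting $t\to+\infty$ yields $\|v\|_{L^2(\dot\B^{\frac1p}_{p,1})}\leq Cc_0\lambda^{-\frac12}$.

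For the second inequality the plan is to start from the first equation of $(TM_\lambda)$ in the integrated form
$$u(t)-u_0=-\partial_x\!\int_0^t v\,ds-\int_0^t v\,\partial_x u\,ds,$$
and to split the $\dot\B^0_{p,1}$ norm of $u(t)-u_0$ into low and high frequencies at a threshold $2^J$ to be optimized. One first observes that, for $2\leq p\leq 4$, the Sobolev embedding $\dot\B^{\frac32}_{2,1}\hookrightarrow\dot\B^{1+\frac1p}_{p,1}$ combined with one high-frequency Bernstein factor $2^{-J_\lambda}\sim\lambda^{-1}$ (which absorbs the $\lambda$ appearing in the high-frequency control inside $X_{p,\lambda}$) yields the uniform estimate $\|u(t)\|_{\dot\B^{\frac1p}_{p,1}}\lesssim c_0$. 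Consequently, the high-frequency contribution obeys
$$\|u(t)-u_0\|^{h,J}_{\dot\B^0_{p,1}}\lesssim 2^{-J/p}\|u(t)-u_0\|_{\dot\B^{\frac1p}_{p,1}}\lesssim c_0\,2^{-J/p}.$$

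For the low-frequency contribution I would use the equation. Applying Bernstein on low frequencies and Cauchy--Schwarz in time to the linear part gives
$$\biggl\|\partial_x\!\int_0^t v\,ds\biggr\|^{\ell,J}_{\dot\B^0_{p,1}}\lesssim 2^{J(1-\frac1p)}\!\int_0^t\|v\|_{\dot\B^{\frac1p}_{p,1}}\,ds\lesssim 2^{J(1-\frac1p)}t^{\frac12}\|v\|_{L^2_t(\dot\B^{\frac1p}_{p,1})}\lesssim c_0\,2^{J(1-\frac1p)}(t/\lambda)^{\frac12}.$$
The nonlinear term is handled by combining the algebra property of $\dot\B^{\frac1p}_{p,1}$ in dimension one with the interpolated bound $\|u\|_{L^2_t(\dot\B^{1+\frac1p}_{p,1})}\lesssim c_0\lambda^{\frac12}$ (obtained by interpolating the $L^\infty_t(\dot\B^{\frac1p}_{p,1})$ and $L^1_t(\dot\B^{\frac1p+2}_{p,1})$ controls in low frequencies, and the $L^\infty_t$ and $L^1_t$ controls in $\dot\B^{\frac32}_{2,1}$ in high frequencies, then transferring to $\dot\B^{1+\frac1p}_{p,1}$ via the Sobolev embedding above). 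This yields $\|v\,\partial_xu\|_{L^1_t(\dot\B^{\frac1p}_{p,1})}\lesssim c_0^2$, after which a Bony paraproduct decomposition produces, for the low-frequency part, a contribution of the same structure as the linear one but dressed with an extra factor $c_0$, hence absorbable thanks to smallness.

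Finally, choosing $2^J\sim(\lambda/t)^{\frac12}$ balances the low- and high-frequency contributions and yields the announced bound $\|u(t)-u_0\|_{\dot\B^0_{p,1}}\lesssim c_0(t/\lambda)^{\frac1{2p}}$. The main obstacle is the low-frequency $\dot\B^0_{p,1}$ control of the nonlinear term: the Bernstein inequality points the wrong way at regularity index $0$ on low frequencies, so a direct bound by $\|v\,\partial_x u\|_{\dot\B^{\frac1p}_{p,1}}$ is unavailable; Bony's paraproduct combined with the algebra structure of $\dot\B^{\frac1p}_{p,1}$ in dimension one is what bypasses this difficulty.
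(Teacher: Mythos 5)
The first inequality is handled exactly as in the paper. For the second, you propose a genuinely different route: integrate the equation for $u$, split $\|u(t)-u_0\|_{\dot\B^0_{p,1}}$ at an optimization threshold $2^J$, bound the high frequencies uniformly by $c_0\,2^{-J/p}$, bound the low frequencies using the equation, and then choose $2^J\sim(\lambda/t)^{1/2}$. The paper instead works with $\partial_t u=-\partial_xv-v\partial_xu$, shows $\|\partial_tu\|_{L^r(\dot\B^0_{p,1})}\lesssim c_0\lambda^{-\frac1{2p}}$ with $\tfrac1r=1-\tfrac1{2p}$ by interpolating the $L^1_t$ and $L^\infty_t$ controls at regularities $\tfrac1p+1$ and $\tfrac1p-1$, and concludes by H\"older in time; no frequency splitting and no optimization are needed. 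Your high-frequency and linear low-frequency estimates are correct.

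The gap is in the low-frequency estimate of the nonlinear term $\int_0^t v\,\partial_xu$. You rely on Bony's decomposition plus the algebra structure of $\dot\B^{\frac1p}_{p,1}$ to claim a contribution of the same form $c_0^2\, 2^{J(1-\frac1p)}(t/\lambda)^{1/2}$, but the remainder piece $R(v,\partial_xu)$ in $v\,\partial_xu=T_v\partial_xu+T_{\partial_xu}v+R(v,\partial_xu)$ does not close with only the $\dot\B^{\frac1p}_{p,1}$ information on $v$ and $u$. The relevant regularity sum is $\tfrac1p+(\tfrac1p-1)=\tfrac2p-1$, which is $\leq 0$ for $p\in[2,4]$, so the standard remainder estimate (which requires a positive sum of regularities) does not apply: the $\dot\B^0_{p,1}$ low-frequency norm of $R(v,\partial_xu)$ cannot be bounded by $\|v\|_{\dot\B^{\frac1p}_{p,1}}\|u\|_{\dot\B^{\frac1p}_{p,1}}$, because the sum over low output frequencies $j'\leq J$ diverges. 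What makes the estimate close is putting $\partial_xu$ at regularity $0$ and using the product law $\dot\B^{0}_{p,1}\times\dot\B^{\frac1p}_{p,1}\to\dot\B^{0}_{p,1}$ of Proposition \ref{LP}; but a \emph{time}-interpolated control on $\|\partial_xu\|_{\dot\B^0_{p,1}}$ is then unavoidable, since the $L^\infty_t$ bound on that quantity alone scales like $\lambda^{1-\frac1p}$. In other words, your sketch silently needs exactly the interpolation step of the paper's proof, and once that is in place the frequency splitting becomes superfluous. As written, the nonlinear low-frequency bound does not follow from what you invoke.
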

\begin{proof} The first inequality follows from the estimate for $X_{p,\lambda}$
in Theorem \ref{ThmExistLp}. 
For the second inequality,  we observe that by interpolation in Besov
spaces and H\"older inequality, 
$$\|\d_xv\|_{L^r(\dot\B^0_{p,1})}\lesssim \|\d_xv\|_{L^1(\dot\B^{\frac1p}_{p,1})}^{1-\frac1p}
\|\d_xv\|_{L^2(\dot\B^{\frac1p-1}_{p,1})}^{\frac1p}\with\frac1r\triangleq 1-\frac1{2p}\cdotp$$
Since 
$$\|\d_xv\|_{L^1(\dot\B^{\frac1p}_{p,1})}\lesssim \|v\|_{L^1(\dot\B^{\frac1p+1}_{p,1})}^\ell+ \|v\|_{L^1(\dot\B^{\frac32}_{2,1})}^h,$$
 Theorem \ref{ThmExistLp} gives us
$$\|\d_xv\|_{L^r(\dot\B^0_{p,1})}\leq Cc_0\lambda^{-\frac1{2p}}\cdotp$$
Similarly,  we have
$$\|\d_xu\|_{L^{\wt r}(\dot\B^0_{p,1})}\lesssim \|\d_xu\|_{L^1(\dot\B^{\frac1p+1}_{p,1})}^{\frac12-\frac1{2p}}
\|\d_xu\|_{L^\infty(\dot\B^{\frac1p-1}_{p,1})}^{\frac12+\frac1{2p}}\with\frac1{\wt r}\triangleq\frac12-\frac1{2p}\cdotp$$
Hence, using that the product maps
$\dot\B^0_{p,1}\times\dot\B^{\frac1p}_{p,1}$ to $\dot\B^0_{p,1}$ and Theorem \ref{ThmExistLp}, 
we deduce that 
$$\|v\d_xu\|_{L^{r}(\dot\B^0_{p,1})} \lesssim \|v\|_{L^{2}(\dot\B^{\frac1p}_{p,1})}
\|\d_xu\|_{L^{\wt r}(\dot\B^0_{p,1})}
\leq Cc_0\lambda^{-\frac1{2p}}\cdotp$$
Since $\d_tu= -\d_xv-v\d_xu,$ we get the desired inequality 
for $u(t)-u_0,$ by time integration and H\"older inequality. 
\end{proof}

\medbreak 
Our second main result concerns the optimal decay estimates of the solution constructed in 
 the first  theorem. For now, we only consider the case $p=2.$
 \begin{Thm} \label{ThmDecayTM} Under the hypotheses of  Theorem \ref{ThmExistLp} with $p=2,$
 there exists a Lyapunov functional associated to the solution 
  $(u,v)$ constructed there, which is equivalent to $\|(u,v)\|_{\dot\B^{\frac12}_{2,1}\cap\dot\B^{\frac32}_{2,1}}.$ 
\medbreak
If,  additionally,  $(u_0,v_0)\in\dot{\mathbb{B}}^{-\sigma_1}_{2,\infty}$ for some
$\sigma_1\in\left(-\frac{1}{2},\frac{1}{2}\right]$ then, there exists 
a constant $C$ depending only on $\sigma_1$ and such that 
$$\norme{(u,v)(t)}_{\dot{\mathbb{B}}^{-\sigma_1}_{2,\infty}}\leq C\norme{(u_0,v_0)}_{\dot{\mathbb{B}}^{-\sigma_1}_{2,\infty}},\quad\forall t\geq 0. $$ 
Furthermore,    there exists a constant $\kappa_0$ depending only on $\sigma_1,$ $\lambda$ and on the norm of the 
data (and that may be taken equal to one in certain regimes, see the remark below) such that, if 
$$\langle  t \rangle\triangleq 1+\kappa_0 t,\quad \alpha_2\triangleq \sigma_1+\frac12
\andf C_{0,\lambda}\triangleq 
\lambda^{1+\alpha_2}\|(u_0,v_0)\|^{\ell,J_\lambda}_{\dot{\mathbb{B}}^{-\sigma_1}_{2,\infty}} + 
\|(u_0,v_0)\|^{h,J_\lambda}_{\dot{\mathbb{B}}^{\frac32}_{2,1}},$$
then we have the following decay estimates: 
$$
\begin{aligned}
\lambda^{\frac{3}{2}-\sigma}\norme{\langle  \lambda t \rangle^{\alpha}(u,v)(t)}^{\ell,J_\lambda}_{\dot{\mathbb{B}}^{\sigma}_{2,1}}&\leq C C_{0,\lambda},\quad  \sigma\in[-\sigma_1,1/2],\quad \alpha\triangleq\frac{\sigma+\sigma_1}2,\\
\norme{\langle\lambda t\rangle^{\alpha_2}(u,v)(t)}^{h,J_\lambda}_{\dot{\mathbb{B}}^{\frac{3}{2}}_{2,\infty}}
&\leq C C_{0,\lambda},\\
\lambda^{\sigma_1+\frac{3}{2}}\norme{\langle \lambda t\rangle^{\alpha_1}
v}^{\ell,J_\lambda}_{\dot{\mathbb{B}}^{-\sigma_1}_{2,\infty}}&\leq CC_{0,\lambda},\qquad
\alpha_1\triangleq\frac12\Bigl(\frac12+\sigma_1\Bigr)\cdotp\end{aligned}$$
\end{Thm}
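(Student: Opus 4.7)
\medbreak\noindent\textbf{Proof plan.}

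\emph{Step 1: Lyapunov functional.} I would work at the level of the dyadic blocks $(\dot\Delta_j u,\dot\Delta_j v)$. A standard $L^2$ energy estimate on the localised system produces dissipation only on $v$ through the friction $\lambda v$. To recover dissipation on $u$, I would add a small cross term $\eta_j\int \dot\Delta_j u\cdot\dot\Delta_j v\,dx$, with $\eta_j$ tuned so that in low frequencies ($j\leq J_\lambda$, i.e.\ $2^j\lesssim\lambda$) it generates a parabolic-type dissipation $\lambda^{-1}2^{2j}\norme{\dot\Delta_j u}_{L^2}^2$ (reflecting the heat-like behaviour of $(TM_\lambda)$ on $u$ once $v$ is eliminated via the effective flux $\lambda v+\d_x u$), whereas in high frequencies the damping on $v$ is already enough. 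Summing the resulting dyadic inequalities with weights $2^{j/2}$ for $j\leq J_\lambda$ and $2^{3j/2}$ for $j>J_\lambda$ produces a quantity $\mathcal{L}(t)$ equivalent to $\norme{(u,v)(t)}_{\dot\B^{1/2}_{2,1}\cap\dot\B^{3/2}_{2,1}}$ and satisfying a differential inequality $\frac{d}{dt}\mathcal{L}+\kappa\mathcal{D}\leq\text{(nonlinear remainder)}$, where $\mathcal{D}$ collects the dissipative pieces making up the norm $X_{2,\lambda}$ of Theorem \ref{ThmExistLp} (in particular $\lambda^{-1}\norme{u}^\ell_{\dot\B^{5/2}_{2,1}}$, $\norme{(u,v)}^h_{\dot\B^{3/2}_{2,1}}$ and $\norme{\lambda v+\d_x u}_{\dot\B^{1/2}_{2,1}}$). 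The a priori bound of Theorem \ref{ThmExistLp} ensures the nonlinear remainder is absorbable.

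\emph{Step 2: propagation of $\dot\B^{-\sigma_1}_{2,\infty}$.} I would apply $\dot\Delta_j$ to each equation, perform an $L^2$ estimate of the same flavour, then multiply by $2^{-j\sigma_1}$ and take $\sup_j$. The linear coupling is controlled because $\sigma_1\in(-1/2,1/2]$ keeps the skew-symmetric part balanced by the cross term. The convective nonlinearities $v\d_x u$ and $v\d_x v$ are handled via a Bony-type decomposition together with the product rule $\dot\B^{1/2}_{2,1}\times\dot\B^{-\sigma_1}_{2,\infty}\hookrightarrow\dot\B^{-\sigma_1}_{2,\infty}$, valid precisely on the allowed range of $\sigma_1$. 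The $\dot\B^{1/2}_{2,1}$ factor of the velocity is bounded by $\mathcal{L}(t)^{1/2}$, whose time integrability follows from Step 1. A Grönwall argument then yields $\norme{(u,v)(t)}_{\dot\B^{-\sigma_1}_{2,\infty}}\leq C\norme{(u_0,v_0)}_{\dot\B^{-\sigma_1}_{2,\infty}}$.

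\emph{Step 3: decay estimates.} Following Xu and Xin's strategy, I would feed the uniform bound of Step 2 back into the Lyapunov inequality of Step 1. For any $J(t)\leq J_\lambda$, Bernstein gives
$$\norme{(u,v)}^{\ell,J(t)}_{\dot\B^{1/2}_{2,1}}\lesssim 2^{J(t)(\sigma_1+1/2)}\norme{(u,v)}_{\dot\B^{-\sigma_1}_{2,\infty}},$$
while the frequencies between $J(t)$ and $J_\lambda$ are bounded by $2^{-2J(t)}$ times the parabolic dissipation $\lambda^{-1}\norme{u}^\ell_{\dot\B^{5/2}_{2,1}}$. Optimising $J(t)$ in this splitting yields an interpolation inequality $\mathcal{L}^\ell\lesssim C_{0,\lambda}^\theta\mathcal{D}^{1-\theta}$ with $\theta=2/(2\alpha_2+3)$, which injected into the Lyapunov inequality gives
$$\frac{d}{dt}\mathcal{L}+\kappa_0\lambda(\mathcal{L}/C_{0,\lambda})^{1+1/\alpha_2}\leq 0.$$
Integration provides the polynomial rate $\mathcal{L}(t)\lesssim C_{0,\lambda}\langle\lambda t\rangle^{-\alpha_2}$. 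The high-frequency part decays exponentially in time thanks to the uniform damping on $h$-frequencies, producing the announced $\dot\B^{3/2}_{2,\infty}$ bound. The refined low-frequency rates for $\sigma\in[-\sigma_1,1/2]$ follow by real interpolation between Step 2 and the endpoint $\sigma=1/2$. The sharper rate $\alpha_1$ on $v$ alone comes from the additional $L^2_t(\dot\B^{1/2}_{2,1})$ integrability of $v$ given by Theorem \ref{ThmExistLp}, which upgrades by a factor $\langle\lambda t\rangle^{1/2}$ the rate obtained for the full $(u,v)$.

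\emph{Main obstacle.} The hardest part will be keeping sharp and uniform track of the $\lambda$-dependency at every step: the shifted threshold $J_\lambda$, the weight $\lambda^{-1}$ in the high-frequency part of $\mathcal{L}$, and the rescaling $\langle\lambda t\rangle$ are all tied together by the requirement that the Lyapunov inequality be homogeneous under the natural parabolic scaling of $(TM_\lambda)$ at low frequencies. Ensuring that the nonlinear commutators in Step 2 do not spoil this homogeneity, and that the interpolation in Step 3 preserves the sharp exponents $\alpha=(\sigma+\sigma_1)/2$ and $\alpha_1=\frac12(\sigma_1+\frac12)$, is where the bookkeeping becomes delicate; the fact that the damped mode $\lambda v+\d_x u$ (rather than $v$ itself) enters the dissipation is what makes this homogeneity possible.
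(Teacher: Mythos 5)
Your overall skeleton (Lyapunov via dyadic cross terms, propagation of negative regularity via commutators and Grönwall, then a nonlinear differential inequality for the Lyapunov functional closed with the uniform $\dot\B^{-\sigma_1}_{2,\infty}$ bound) matches the paper's strategy, and Step 2 is essentially correct. However, Step 3 contains a genuine arithmetic error and two mechanism misattributions that would prevent your plan from producing the theorem's exponents.

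\emph{Exponent in the nonlinear ODE.} The paper interpolates $\|(u,v)\|^\ell_{\dot\B^{1/2}_{2,1}}\lesssim\bigl(\|(u,v)\|^\ell_{\dot\B^{-\sigma_1}_{2,\infty}}\bigr)^{\theta_0}\bigl(\|(u,v)\|^\ell_{\dot\B^{5/2}_{2,1}}\bigr)^{1-\theta_0}$ with $\theta_0=2/(5/2+\sigma_1)=2/(2+\alpha_2)$, which produces $\frac{d}{dt}\wt\cL+cC_0^{-\theta_0/(1-\theta_0)}\wt\cL^{1+2/\alpha_2}\leq0$ and hence $\wt\cL(t)\lesssim\langle t\rangle^{-\alpha_1}$ with $\alpha_1=\alpha_2/2$. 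You write the exponent as $1+1/\alpha_2$ and conclude $\mathcal{L}(t)\lesssim\langle\lambda t\rangle^{-\alpha_2}$, which is a factor-of-two overshoot; this is inconsistent with the theorem itself, since for $\sigma=1/2$ the stated rate is $\alpha=(\sigma+\sigma_1)/2=\alpha_1$, not $\alpha_2$. If you then interpolate between $\sigma=-\sigma_1$ (rate $0$) and $\sigma=1/2$ to fill in the scale $[-\sigma_1,1/2]$, your version would give rate $\sigma+\sigma_1$ rather than $(\sigma+\sigma_1)/2$.

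\emph{High frequencies.} The claim that the high frequencies decay exponentially is not correct in the nonlinear setting, and would overshoot the theorem's stated algebraic rate $\alpha_2$. The linear high-frequency semigroup is indeed exponentially damped, but Duhamel's formula produces the forcing term $\int_0^te^{-c(t-\tau)}\|v\|_{\dot\B^{3/2}_{2,1}}\|(u,v)\|_{\dot\B^{3/2}_{2,1}}\,d\tau$, and the best you can feed in is the $\langle\tau\rangle^{-\alpha_1}$ decay of each factor, giving rate $2\alpha_1=\alpha_2$. The paper does exactly this; your plan should replace the exponential claim with this convolution estimate.

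\emph{Improved rate for $v$.} You attribute the extra decay of $v^\ell$ in $\dot\B^{-\sigma_1}_{2,\infty}$ to the $L^2_t(\dot\B^{1/2}_{2,1})$ bound on $v$. A time-integrated bound does not by itself upgrade a pointwise-in-time rate. The paper instead writes the Duhamel formula for $v$ in the damped equation $\partial_t v+v=-\tfrac12\partial_x(v^2)-\partial_xu$, and uses that $\|\d_xu\|^\ell_{\dot\B^{-\sigma_1}_{2,\infty}}\lesssim\|u\|^\ell_{\dot\B^{1/2}_{2,1}}$ (valid because $1-\sigma_1\geq1/2$), which already decays at rate $\alpha_1$ from Step 3. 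This is what produces the rate $\alpha_1$ for $v$ alone.

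As a smaller remark, your frequency-dependent weights $2^{j/2}$ and $2^{3j/2}$ with per-frequency tuning of the cross coefficient $\eta_j$ are a viable packaging, but the paper uses the unified dyadic block $\cL_j^2=\|u_j\|^2_{L^2}+\|v_j\|^2_{L^2}+\|\d_xu_j\|^2_{L^2}+\|\d_xv_j\|^2_{L^2}+\int_\R v_j\d_xu_j$ with a single weight $2^{j/2}$ (since $\cL_j$ already carries one derivative), plus a correction $\eta\|v+\d_xu\|^\ell_{\dot\B^{1/2}_{2,1}}$ to upgrade the low-frequency dissipation from $2^{j/2}\cL_j$ to the full $\|u\|^\ell_{\dot\B^{5/2}_{2,1}}$. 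Your time-dependent frequency cutoff à la Schonbek is a legitimate alternative to the paper's direct interpolation into a nonlinear ODE, but it must be carried out with the correct exponent noted above.
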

\begin{Rmq}   Our proof reveals that\footnote{The exact value is $\kappa_0=
\biggl(\frac{\lambda\|(u_0,v_0)\|^{\ell,J_\lambda}_{\dot{\mathbb{B}}^{1/2}_{2,1}} + 
\|(u_0,v_0)\|^{h,J_\lambda}_{\dot{\mathbb{B}}^{3/2}_{2,1}} } {\lambda^{1+\alpha_2}\|(u_0,v_0)\|^{\ell,J_\lambda}_{\dot{\mathbb{B}}^{-\sigma_1}_{2,\infty}} + \|(u_0,v_0)\|^{h,J_\lambda}_{\dot{\mathbb{B}}^{3/2}_{2,1}}}\biggr)^{\frac{2}{\sigma_1+1/2}}\cdotp$}
 $\kappa_0\approx1$ whenever the first term of $C_{0,\lambda}$ is controlled by the second one
(which amounts to saying that the low frequencies of the data are dominated by the high frequencies).
\end{Rmq}
\begin{Rmq}
The fact that $v$ undergoes direct dissipation and not $u$ explains 
why  the decay of  the low frequencies of  $v$ is stronger than that of  $u.$
\end{Rmq}
\begin{Rmq}
 In light of  the embedding $L^1\hookrightarrow \dot{\mathbb{B}}^{-\frac{1}{2}}_{2,\infty},$ the above statement  with $\sigma_1=1/2$ encompasses
 the classical $L^1$ condition of  \cite{MatsumuraNishida}. 
 Actually, choosing suitable  exponents  allows to recover all the conditions used in \cite{BHN} for getting decay estimates.
\end{Rmq}


\section{The case  \texorpdfstring{$p=2$}{TEXT}}\label{s:L2}

The present section is dedicated to the case $p=2$ in Theorem \ref{ThmExistLp}
and to the proof  of Theorem \ref{ThmDecayTM}.
The reason for looking first at  $p=2$  is that one can 
exhibit a  Lyapunov functional for $(TM_\lambda)$ that allows
to treat the low and high frequencies of the solution together. 
Throughout this section, we focus on  the proof of a priori estimates  for smooth 
solutions to $(TM_\lambda),$ the reader
being referred to the  next section for the
rigorous proof of existence and  uniqueness, in  the general case. 
\smallbreak
Before starting, let us observe  that
$(u,v)$ is a solution to $(TM_\lambda)$ if and only if the couple $(\tilde u,\tilde v)$ defined
by 
\begin{equation}\label{eq:rescaling} 
({u},{v})(t,x)\triangleq(\tilde u,\tilde v)(\lambda t,\lambda x)
\end{equation} satisfies $(TM_1).$ 
Therefore, it suffices to establish Theorems \ref{ThmExistLp} and \ref{ThmDecayTM} 
for $\lambda=1,$  scaling back giving the desired inequalities, owing to the use of \emph{homogeneous}
Besov norms.

In the rest of this section, and in the following one, we shall use the short notation $(TM)$ to designate $(TM_1).$

\subsection{Global a  priori estimates for the linearized toy model}

Here we are concerned with the proof of a priori estimates for the following
linearization of $(TM)$:  
 $$ \left\{\begin{array}{lll}\d_tu+w\d_xu+\d_xv=0 \quad&\hbox{in}\quad& \R^+\times\R,\\
 \d_tv+w\d_xv+\d_xu +v=0 \quad&\hbox{in}\quad& \R^+\times\R,\\
(u,v)|_{t=0}=(u_0,v_0)  \quad&\hbox{on}\quad&\R,\end{array}\right.\leqno(LTM)
$$ 
where the given function $w:\R\times\R\to\R$ is smooth. 
\smallbreak
In the following computations, we assume that we are given a smooth solution $(u,v)$ 
of $(LTM)$ on $[0,T]\times\R,$ and  denote, for all $j\in\Z,$
\begin{equation}\label{eq:notationj}
u_j\triangleq \ddj u\andf v_j\triangleq \ddj v.
\end{equation}
Inspired by the work of the second author  in \cite{NSCL2,Handbook}, we consider  the following functional:
\begin{equation}\label{eq:Lj}\cL_j\triangleq \sqrt{ \|u_j\|_{L^2}^2 + \|v_j\|_{L^2}^2 + \|\d_xu_j\|_{L^2}^2 + \|\d_xv_j\|_{L^2}^2 + \int_\R v_j\,\d_xu_j}.\end{equation}
Applying operator $\ddj$ to $(LTM),$ simple computations lead to
$$\displaylines{\frac12\frac d{dt}\|(u_j,v_j)\|_{L^2}^2 + \|v_j\|_{L^2}^2 + \int_\R\Bigl( \ddj(w\d_xu)\,\ddj u + \ddj(w\d_x v)\,\ddj v\Bigr) =0, \cr
\frac12\frac d{dt}\|(\d_xu_j,\d_xv_j)\|_{L^2}^2 + \|\d_xv_j\|_{L^2}^2 + \int_\R\bigl(\ddj \d_{x}(w\d_xu))\,\ddj \d_xu +  \ddj(\d_x(w\d_x v))\,\ddj \d_xv\Bigr)  =0,} $$
$$\displaylines{\frac d{dt}\int_\R v_j\,\d_xu_j +\int_\R v_j\,\d_xu_j 
+ \|\d_xu_j\|_{L^2}^2 -  \|\d_xv_j\|_{L^2}^2+
\int_\R\Bigl( \ddj\d_{x}(w\d_xu)\ddj v+ \ddj(w\d_xv)\d_x\ddj u\Bigr)=0.}$$
Using the fact that 
\begin{equation}\label{eq:com}
\ddj(w\d_xz)=w\d_xz_j+[\ddj,w]\d_xz\quad\hbox{for }\ z=u,v\end{equation}
 and  integrating by parts, we see that
$$\int_\R \ddj(w\d_xz)\,\ddj z=-\frac12\int_\R \d_xw \,|z_j|^2+\int_\R [\ddj,w]\d_xz\, z_j.$$
Hence,  using  the classical commutator estimate \eqref{eq:com1} recalled
in the Appendix and the embedding $\dot\B^{\frac12}_{2,1}\hookrightarrow L^\infty,$
 we get   an absolute  constant $C>0$ such that for all $j\in\Z,$
$$\biggl|\int_\R\ddj(w\d_x z)\,\ddj z\biggr| \leq C c_j 2^{-\frac j2}\|\d_x w\|_{\dot \B^{\frac12}_{2,1}}\|z\|_{\dot \B^{\frac12}_{2,1}}\|\ddj z\|_{L^2}
\with\sum_{j\in\Z} c_j=1. $$
Likewise, we have, thanks to an  integration by parts,  
$$\int_\R \ddj \d_{x}(w\d_xz)\,\ddj \d_xz=  \frac12\int_\R \d_xw\,(\d_xz_j)^2
+\int_\R\d_x[\ddj,w]\d_xz\,\d_xz_j.$$
Hence, using \eqref{eq:com2} and  $\dot\B^{\frac12}_{2,1}\hookrightarrow L^\infty,$
$$\biggl|\int_\R \ddj \d_{x}(w\d_xz)\,\ddj \d_xz\biggr|\leq Cc_j 2^{-\frac j2} \|\d_x w\|_{\dot \B^{\frac12}_{2,1}}\|z\|_{\dot \B^{\frac12}_{2,1}}\|\d_xz_j\|_{L^2}.$$
Finally, integrating by parts reveals that 
$$\int_\R\Bigl( \ddj\d_{x}(w\d_xu)\ddj v+ \ddj(w\d_xv)\d_x\ddj u\Bigr)
=\int_\R [\ddj,w]\d_xv\,\d_xu_j-\int_\R[\ddj,w]\d_xu\,\d_xv_j.$$
Hence, using \eqref{eq:com1}, 
$$\displaylines{\quad
\biggl|\int_\R\Bigl( \ddj\d_{x}(w\d_xu)\ddj v+ \ddj(w\d_xv)\d_x\ddj u\Bigr)\biggr|\hfill\cr\hfill
\leq Cc_j2^{-\frac j2}\bigl(\|u\|_{\dot\B^{\frac12}_{2,1}}\|\d_xv_j\|_{L^2}
+\|v\|_{\dot\B^{\frac12}_{2,1}}\|\d_xu_j\|_{L^2}\bigr)\|\d_xw\|_{\dot\B^{\frac12}_{2,1}}\bigr)\cdotp}$$
In order to conclude the proof of estimates for $\cL_j,$ one can observe that there exist two 
absolute constants $C$ and $c$ such that
\begin{eqnarray}\label{eq:Xj}
&C^{-1}\|(u_j,v_j,\d_xu_j,\d_xv_j)\|_{L^2}^2\leq\cL^2_j\leq C \|(u_j,v_j,\d_xu_j,\d_xv_j)\|_{L^2}^2\\
&\andf
 \|v_j\|_{L^2}^2+\frac12\biggl(\|\d_xu_j\|_{L^2}^2+\|\d_xv_j\|_{L^2}^2+\int_\R v_j\,\d_xu_j\,dx\biggr)\geq c \min(1,2^{2j}) \cL_j^2.\nonumber\end{eqnarray}
Consequently, putting together with the above inequalities,  we obtain 
\begin{equation}\label{eq:Ljj}
\frac12\frac{d}{dt} \cL_j^2 +c\min(1,2^{2j})\cL_j^2\leq Cc_j 2^{-\frac j2} \|\d_x w\|_{\dot \B^{\frac12}_{2,1}}\|(u,v,\d_xu,\d_xv)\|_{\dot \B^{\frac12}_{2,1}}\cL_j.\end{equation}
Then,  integrating  on [0,t] for $t\in[0,T]$ and using Lemma \ref{SimpliCarre} yields
\begin{multline} \label{eq:toy1}
2^{\frac j2}\|(u_j,v_j,\d_xu_j,\d_xv_j)(t)\|_{L^2} + \min(1,2^{2j})2^{\frac j2} \int_0^t   \|(u_j,v_j,\d_xu_j,\d_xv_j)\|_{L^2}\\
\leq C\biggl(2^{\frac j2}\|(u_j,v_j,\d_xu_j,\d_xv_j)(0)\|_{L^2} + \int_0^t c_j   
 \|(u,v,\d_xu,\d_xv)\|_{\dot \B^{\frac12}_{2,1}}\|\d_xw\|_{\dot \B^{\frac12}_{2,1}}\biggr)\cdotp\end{multline}
Since  (direct) damping is present in the equation for $v,$ one can expect  $v$ to have 
 better decay and  time integrability properties than $u.$   In fact, as explained at the beginning of Section \ref{s:Lp}, it is even better to consider
 the function $z\triangleq v+\d_xu$ that satisfies:
$$\d_t z + z + w \d_xz=-\d^2_{xx}v-\d_xw\,\d_xu.$$
Now, applying Operator $\ddj$ to the above equation, then using the basic energy method gives: 
$$\frac12\frac d{dt}\|z_j\|_{L^2}^2+\|z_j\|_{L^2}^2=-\int_\R z_j\,\d^2_{xx}v_j -\int_\R\ddj(w\d_xz)\,z_j-\int_\R\ddj(\d_xw\,\d_xu)z_j.$$
The last term may be handled thanks to the decomposition \eqref{eq:com}, integration
by parts (as above) and Inequality \eqref{eq:com1} with $s=1/2.$ This gives
$$\frac12\frac d{dt}\|z_j\|_{L^2}^2+\|z_j\|_{L^2}^2\leq \|z_j\|_{L^2}\bigl(\|\d^2_{xx}v_j\|_{L^2}
+C2^{-\frac j2} c_j\|\d_xw\|_{\dot\B^{\frac12}_{2,1}}\|z\|_{\dot\B^{\frac12}_{2,1}}+ \|\ddj(\d_xw\,\d_xu)\|_{L^2}\bigr)\cdotp$$
After time integration (use Lemma \ref{SimpliCarre}), we end up  for all $t\in[0,T]$ with 
 $$\displaylines{2^{\frac j2} \|z_j(t)\|_{L^2}+ 2^{\frac j2}\int_0^t\|z_j\|_{L^2}\leq 2^{\frac j2}\|z_j(0)\|_{L^2} \hfill\cr\hfill+2^{\frac j2}\int_0^t\|\d^2_{xx}v_j\|_{L^2}+ C\int_0^tc_j\bigl(\|\d_xw\|_{\dot\B^{\frac12}_{2,1}}\|z\|_{\dot\B^{\frac12}_{2,1}}+\|\d_xw\,\d_xu\|_{\dot\B^{\frac12}_{2,1}}\bigr)\cdotp}$$
Hence, summing up on $j\leq0$ and using  the stability of the space $\dot\B^{\frac12}_{2,1}$ by product yields
\begin{equation}\label{eq:zzz}
\|z(t)\|^\ell_{\dot\B^{\frac12}_{2,1}}+\int_0^t\|z\|^\ell_{\dot\B^{\frac12}_{2,1}}\leq \|z_0\|^\ell_{\dot\B^{\frac12}_{2,1}}
+C\int_0^t\|v\|_{\dot\B^{\frac52}_{2,1}}^\ell
+C\int_0^t\|\d_x w\|_{\dot \B^{\frac12}_{2,1}} \bigl(\|z\|_{\dot\B^{\frac12}_{2,1}}+\|\d_xu\|_{\dot\B^{\frac12}_{2,1}}\bigr)\cdotp
\end{equation}
Let us sum  \eqref{eq:toy1} on $j\in\Z$,  
then add   \eqref{eq:zzz}   multiplied by a small enough constant. Using \eqref{eq:Xj} and 
denoting  $X(t)\triangleq  \|(u,v,\d_xu,\d_xv)(t)\|_{\dot \B^{\frac12}_{2,1}},$
we eventually get
\begin{equation}\label{eq:toy4}
X(t)+\int_0^t\bigl(\|(u,v)\|^\ell_{\dot \B^{\frac52}_{2,1}}
+\|(u,v)\|^h_{\dot \B^{\frac32}_{2,1}}+ \|v+\d_xu\|^\ell_{\dot\B^{\frac12}_{2,1}}\bigr) \leq C\biggl(X(0)+\int_0^t\|\d_x w\|_{\dot \B^{\frac12}_{2,1}}X\biggr)\cdotp
\end{equation}
Let us revert to our toy model, assuming that $w=v.$ Then, denoting by $Y(t)$ the left-hand side of \eqref{eq:toy4}, we get
$$
Y(t)\leq C\bigl(X(0)+Y^2(t)\bigr).$$
As $Y(0)=X(0),$ a continuity argument
ensures that   there exists  $c_0>0$ such that if 
\begin{equation}\label{eq:petit} 
X(0)\approx \|(u_0,v_0)\|_{\dot \B^{\frac12}_{2,1}\cap\dot \B^{\frac32}_{2,1}}\leq c_0,\end{equation}
then we have  $$Y(t)\leq 2 CX(0)\quad\hbox{for all }\ t\in[0,T].$$


\subsection{Proof of Theorem \ref{ThmDecayTM}}

For getting  decay estimates 
\emph{without any additional smallness condition}, the first step is to prove that 
the extra negative regularity for low frequencies is preserved
through the time evolution.  
 This is stated in the following lemma which is an adaptation to our setting of a result first 
 proved by J. Xu and Z. Xin 
 in \cite{XuXin} for the compressible Navier-Stokes system.   
\begin{Lemme} \label{LemmaDecayBf2}Let $\sigma_1\in]-\frac{3}{2},\frac{1}{2}]$. If, 
in addition to the hypotheses of Theorem \ref{ThmExistLp}, 
$\norme{(u_0,v_0)}_{\dot{\mathbb{B}}^{-\sigma_1}_{2,\infty}}$ is bounded then, for 
all $t\geq0,$ we have
$$\norme{(u,v)(t)}_{\dot{\mathbb{B}}^{-\sigma_1}_{2,\infty}}\leq C\norme{(u_0,v_0)}_{\dot{\mathbb{B}}^{-\sigma_1}_{2,\infty}}.$$
\end{Lemme}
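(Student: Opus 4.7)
The plan is to follow the Xu--Xin-type argument from \cite{XuXin}, adapted to our one-dimensional setting: we perform a per-block energy estimate on $(TM)$ and control the convective nonlinearities through commutators. Applying $\ddj$ to the system, taking the $L^2$ inner product of the first (resp.\ second) equation with $u_j=\ddj u$ (resp.\ $v_j=\ddj v$), summing, and noting that the cross terms $\int\d_xv_j\,u_j+\int\d_xu_j\,v_j$ integrate to zero, we obtain
$$\tfrac12\tfrac{d}{dt}\|(u_j,v_j)\|_{L^2}^2+\|v_j\|_{L^2}^2=-\int\ddj(v\d_xu)\,u_j-\int\ddj(v\d_xv)\,v_j.$$
Discarding the favorable damping term and using the commutator decomposition $\ddj(v\d_xz)=v\d_xz_j+[\ddj,v]\d_xz$ followed by the usual integration by parts on the transport piece (which produces only a harmless $\|\d_xv\|_{L^\infty}\|(u_j,v_j)\|_{L^2}^2$), we are reduced to estimating the two commutators $[\ddj,v]\d_xu$ and $[\ddj,v]\d_xv$ in $L^2$.

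The crucial step is then to invoke a commutator estimate of the type
$$\|[\ddj,v]\d_xz\|_{L^2}\le C\,c_j\,2^{j\sigma_1}\,\|v\|_{\dot\B^{3/2}_{2,1}}\,\|z\|_{\dot{\mathbb{B}}^{-\sigma_1}_{2,\infty}},\qquad \|(c_j)_{j\in\Z}\|_{\ell^\infty}\le1,$$
which holds precisely for $\sigma_1\in(-3/2,1/2]$. Multiplying the resulting per-block inequality by $2^{-j\sigma_1}$, taking the supremum in $j$, and using the embedding $\dot\B^{1/2}_{2,1}\hookrightarrow L^\infty$, we arrive after time integration at
$$\|(u,v)(t)\|_{\dot{\mathbb{B}}^{-\sigma_1}_{2,\infty}}\le\|(u_0,v_0)\|_{\dot{\mathbb{B}}^{-\sigma_1}_{2,\infty}}+C\int_0^t\|v(\tau)\|_{\dot\B^{3/2}_{2,1}}\,\|(u,v)(\tau)\|_{\dot{\mathbb{B}}^{-\sigma_1}_{2,\infty}}\,d\tau.$$

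Grönwall's lemma then yields $\|(u,v)(t)\|_{\dot{\mathbb{B}}^{-\sigma_1}_{2,\infty}}\le\|(u_0,v_0)\|_{\dot{\mathbb{B}}^{-\sigma_1}_{2,\infty}}\exp\bigl(C\|v\|_{L^1(\dot\B^{3/2}_{2,1})}\bigr)$, and the exponential factor is absolutely bounded thanks to the global smallness $X_{2,1}(\infty)\le Cc_0$ furnished by Theorem \ref{ThmExistLp} with $p=2$ (recall in particular that $v\in L^1(\dot\B^{3/2}_{2,1})$ globally, with norm $\lesssim c_0$). The main obstacle is to secure the commutator estimate at the endpoint $\sigma_1=1/2$ and with an $\ell^\infty$-indexed sequence $(c_j)$ rather than the $\ell^1$ one used elsewhere in the paper: this requires a careful Bony decomposition of $v\d_xz$, the lower bound $\sigma_1>-3/2$ being dictated by the paraproduct $T_{\d_xz}v$ and the upper one by the regularity of $v$ coming from Theorem \ref{ThmExistLp}.
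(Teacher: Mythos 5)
Your argument is correct and follows the same route as the paper: per-block $L^2$ energy estimate on $(TM)$, discard the damping term, peel off the transport part $v\d_xz_j$ by integration by parts, and control the remaining commutators by an $\ell^\infty$-in-$j$ bound before Grönwall. The commutator estimate you invoke is exactly \eqref{eq:com3} of Proposition \ref{C1} (with $p=2$, $s=-\sigma_1$, $w=v$), since $\|\d_xv\|_{\dot\B^{1/2}_{2,1}}\simeq\|v\|_{\dot\B^{3/2}_{2,1}}$, and its stated range $-\min(1/p,1/p')\le s<1/p+1$ gives precisely $\sigma_1\in\left]-\tfrac32,\tfrac12\right]$; so the ``obstacle'' you flag at the endpoint is already settled by that lemma, and does not require a separate Bony-decomposition argument. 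One small correction of interpretation: the restriction $\sigma_1\le\tfrac12$ comes from the lower endpoint $s\ge-\min(1/p,1/p')$ of \eqref{eq:com3}, not from the regularity of $v$ supplied by Theorem \ref{ThmExistLp}.
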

\begin{proof}
Applying $\ddj$ to $(TM)$ yields
$$
\left\{\begin{array}{l}
\d_tu_j+\d_xv_j=-v\d_xu_j+[v,\ddj]\d_xu,\\
\d_tv_j+\d_xu_j+v_j=-v\d_xv_j+[v,\ddj]\d_xv.
\end{array}\right.
$$
Hence, an energy method, followed by time integration (use Lemma \ref{SimpliCarre}) gives
$$\displaylines{\|(u_j,v_j)(t)\|_{L^2} + \int_0^t\|v_j\|_{L^2} \leq \|(u_j,v_j)(0)\|_{L^2}
\hfill\cr\hfill+\frac12\int_0^t\|\d_xv\|_{L^\infty}\|(u_j,v_j)\|_{L^2} +\int_0^t\|[v,\ddj]\d_xu\|_{L^2}
+\int_0^t\|[v,\ddj]\d_xv\|_{L^2}.}$$
Omitting  the second term in the left-hand side, and using   the commutator
estimate \eqref{eq:com3}  that is valid provided $-1/2\leq-\sigma_1<3/2,$  we get
$$\|(u,v)(t)\|_{\dot\B^{-\sigma_1}_{2,\infty}}\leq \|(u_0,v_0)\|_{\dot\B^{-\sigma_1}_{2,\infty}}
+ C \int_0^t\|\d_xv\|_{\dot\B^{\frac12}_{2,1}} \|(u,v)\|_{\dot\B^{-\sigma_1}_{2,\infty}}.$$
Hence, by Gronwall lemma, 
$$\|(u,v)(t)\|_{\dot\B^{-\sigma_1}_{2,\infty}}\leq \|(u_0,v_0)\|_{\dot\B^{-\sigma_1}_{2,\infty}}
\exp\biggl(C\int_0^t\|\d_xv\|_{\dot\B^{\frac12}_{2,1}}\biggr)\cdotp
$$
Since the term in the exponential is small (as $X(0)$ is small), we get the lemma. 
\end{proof}

The second ingredient is that one can work out  from the computations we did in the previous paragraph,
  a  \emph{Lyapunov functional}
that is equivalent to the norm of $(u,v)$ in  $\dot\B^{\frac12}_{2,1}\cap\dot\B^{\frac32}_{2,1}.$
To proceed,  observe that, on the one hand,  Inequality \eqref{eq:Ljj} implies that for all $t\geq0,$
$$\displaylines{\quad
\cL(t)+c\int_0^t\cH\leq \cL(0) + C\int_0^t\|\d_xv\|_{\dot\B^{\frac12}_{2,1}}\cL
\hfill\cr\hfill\with \cL\triangleq \sum_{j\in\Z} 2^{\frac j2}\cL_j\andf
\cH\triangleq \sum_{j\in\Z}2^{\frac j2}\min(1,2^{2j})\cL_j\quad}$$
and that, on the other hand,  \eqref{eq:zzz} gives us 
$$\|(v+\d_xu)(t)\|^\ell_{\dot\B^{\frac12}_{2,1}}+\int_0^t\|v+\d_xu\|^\ell_{\dot\B^{\frac12}_{2,1}}\leq \|v_0+\d_xu_0\|^\ell_{\dot\B^{\frac12}_{2,1}}
+C\int_0^t\|v\|_{\dot\B^{\frac52}_{2,1}}^\ell
+C\int_0^t\|\d_x v\|_{\dot \B^{\frac12}_{2,1}}\cL.$$
Hence, there exist  $\eta>0$ and $c'>0$ such that, denoting $\wt\cL\triangleq 
\cL+\eta \|v+\d_xu\|^\ell_{\dot\B^{\frac12}_{2,1}},$ we have 
$$
\wt\cL(t)+c'\int_0^t\wt \cH\leq\wt\cL(0)+ C\int_0^t\|\d_xv\|_{\dot\B^{\frac12}_{2,1}}\cL
\with \wt \cH\triangleq \cH + \eta \|v+\d_xu\|^\ell_{\dot\B^{\frac12}_{2,1}}.$$
Observe that $\wt\cH\gtrsim\|\d_xv\|_{\dot\B^{\frac12}_{2,1}}.$  Since the previous step ensures that $\cL\lesssim X(0),$
one can conclude that the last term of the above inequality may be absorbed by the second term of the left-hand side 
provided $X(0)$ is small enough. So finally,  taking $c'$ smaller if need be,  we discover that
$$\wt\cL(t)+c'\int_0^t\wt \cH\leq\wt\cL(0).$$
Clearly,  one can start the proof from  any time $t_0\geq0$ and get in a similar way:
$$\wt\cL(t_0+h)+c'\int_{t_0}^{t_0+h} \wt\cH\leq\wt\cL(t_0),\quad h\geq0.$$
This of course ensures that $\wt\cL$ is nonincreasing on $\R^+$ (hence differentiable almost
everywhere) and that  for all $t_0\geq0$ and $h>0,$ 
$$
\frac{\wt\cL(t_0+h)-\wt\cL(t_0)}h+c'\frac1h\int_{t_0}^{t_0+h}\wt\cH\leq0.
$$
Consequently, passing to the limit $h\to0$ gives
\begin{equation}\label{eq:lyapunov}
\frac d{dt}\wt\cL+c'\wt\cH \leq 0\quad\hbox{a. e.  on }\ \R^+.\end{equation}
We thus come to the conclusion that:
\begin{Lemme} \label{LemmeDecay2}There exist two functionals $\wt\cL$ and $\wt\cD$  satisfying
$$ \wt\cL\simeq   \|(u,v)\|_{\dot\B^{\frac12}_{2,1}\cap\dot\B^{\frac32}_{2,1}}\andf 
\wt\cD\simeq \|u\|^\ell_{\dot\B^{\frac52}_{2,1}}+\|u\|^h_{\dot\B^{\frac32}_{2,1}}+\|v\|_{\dot\B^{\frac32}_{2,1}},$$
and such that if $\|(u_0,v_0)\|_{\dot\B^{\frac12}_{2,1}\cap\dot\B^{\frac32}_{2,1}}$ is small enough then \eqref{eq:lyapunov} is satisfied.
\end{Lemme}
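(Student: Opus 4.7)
\begin{Proof}[Plan]
The lemma is essentially a repackaging of the estimates derived in the preceding paragraphs, so my plan is to set
$$
\wt\cL\triangleq\cL+\eta\,\|v+\d_xu\|^{\ell}_{\dot\B^{\frac12}_{2,1}},\qquad
\wt\cD\triangleq\wt\cH=\cH+\eta\,\|v+\d_xu\|^{\ell}_{\dot\B^{\frac12}_{2,1}},
$$
for a sufficiently small $\eta>0$, and then verify the two norm equivalences and the differential inequality \eqref{eq:lyapunov}. The differential inequality itself has already been proved for this choice of $\wt\cL$ and $\wt\cH$ provided $X(0)\approx\|(u_0,v_0)\|_{\dot\B^{1/2}_{2,1}\cap\dot\B^{3/2}_{2,1}}$ is small enough (to absorb the cubic term $\int_0^t\|\d_xv\|_{\dot\B^{1/2}_{2,1}}\cL$ into $c'\int\wt\cH$), so only the two equivalences require a separate argument.

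For the equivalence $\wt\cL\simeq\|(u,v)\|_{\dot\B^{1/2}_{2,1}\cap\dot\B^{3/2}_{2,1}}$, I would first use \eqref{eq:Xj} to get $\cL_j\simeq\|(u_j,v_j,\d_xu_j,\d_xv_j)\|_{L^2}$. Combining this with the Bernstein inequality $\|\d_xz_j\|_{L^2}\simeq 2^j\|z_j\|_{L^2}$ gives, for the low frequency block $j\leq 0$, $\cL_j\simeq\|(u_j,v_j)\|_{L^2}$, and for $j>0$, $\cL_j\simeq 2^j\|(u_j,v_j)\|_{L^2}$. Summing $2^{j/2}\cL_j$ therefore yields
$$
\cL\simeq\|(u,v)\|^{\ell}_{\dot\B^{\frac12}_{2,1}}+\|(u,v)\|^{h}_{\dot\B^{\frac32}_{2,1}}\simeq\|(u,v)\|_{\dot\B^{\frac12}_{2,1}\cap\dot\B^{\frac32}_{2,1}}.
$$
The added corrector $\eta\|v+\d_xu\|^{\ell}_{\dot\B^{1/2}_{2,1}}$ is controlled by $\eta(\|v\|^{\ell}_{\dot\B^{1/2}_{2,1}}+\|u\|^{\ell}_{\dot\B^{3/2}_{2,1}})$, hence absorbed into $\cL$ for $\eta$ small, so the equivalence transfers from $\cL$ to $\wt\cL$.

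The equivalence for $\wt\cD=\wt\cH$ is obtained in the same spirit, but accounting for the extra factor $\min(1,2^{2j})$ in $\cH$: for $j\leq 0$, the summand is $2^{5j/2}\|(u_j,v_j)\|_{L^2}$, whereas for $j>0$ it is $2^{3j/2}\|(u_j,v_j)\|_{L^2}$. Hence
$$
\cH\simeq\|(u,v)\|^{\ell}_{\dot\B^{\frac52}_{2,1}}+\|(u,v)\|^{h}_{\dot\B^{\frac32}_{2,1}}.
$$
For $v$, the low frequency piece $\|v\|^{\ell}_{\dot\B^{5/2}_{2,1}}$ controls $\|v\|^{\ell}_{\dot\B^{3/2}_{2,1}}$ (the low-frequency Besov norm is monotone decreasing in the regularity index), so $\cH$ already provides full $\dot\B^{3/2}_{2,1}$ control on $v$ from above. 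Conversely, to bound $\|v\|^{\ell}_{\dot\B^{5/2}_{2,1}}$ from above by $\wt\cD$, I would use the damped mode: $v=(v+\d_xu)-\d_xu$, so
$$
\|v\|^{\ell}_{\dot\B^{\frac52}_{2,1}}\leq\|v+\d_xu\|^{\ell}_{\dot\B^{\frac12}_{2,1}}+\|u\|^{\ell}_{\dot\B^{\frac72}_{2,1}}\lesssim\|v+\d_xu\|^{\ell}_{\dot\B^{\frac12}_{2,1}}+\|u\|^{\ell}_{\dot\B^{\frac52}_{2,1}},
$$
(again using monotonicity on low frequencies) and both pieces are in $\wt\cD$. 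Putting these together gives $\wt\cD\simeq\|u\|^{\ell}_{\dot\B^{5/2}_{2,1}}+\|u\|^{h}_{\dot\B^{3/2}_{2,1}}+\|v\|_{\dot\B^{3/2}_{2,1}}$ as claimed.

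The only genuinely delicate step is the second equivalence, specifically the matching between the $\|v\|^{\ell}_{\dot\B^{5/2}_{2,1}}$ arising from $\cH$ and the advertised $\|v\|^{\ell}_{\dot\B^{3/2}_{2,1}}$ in $\wt\cD$: without the correction by $\eta\|v+\d_xu\|^{\ell}_{\dot\B^{1/2}_{2,1}}$, one would only get the one-sided inequality $\wt\cD\gtrsim$ RHS, so it is essential to exploit the damped mode $v+\d_xu$ to recover the reverse bound. This is also precisely where the derivation of \eqref{eq:lyapunov} needed the correction. Once this is understood the rest is bookkeeping with standard Besov monotonicity and Bernstein inequalities.
\end{Proof}
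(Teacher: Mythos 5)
Your proposal correctly identifies $\wt\cL=\cL+\eta\|v+\d_xu\|^{\ell}_{\dot\B^{1/2}_{2,1}}$ and $\wt\cD=\wt\cH$ as the pair already assembled in the text, and the verification of the first equivalence $\wt\cL\simeq\|(u,v)\|_{\dot\B^{1/2}_{2,1}\cap\dot\B^{3/2}_{2,1}}$ and of the differential inequality are sound. The problem is in your handling of the second equivalence, where the argument contains a direction error and where, in fact, the two-sided estimate as literally stated does not hold.

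\textbf{Direction error.} You correctly note that low-frequency Besov norms are monotone \emph{decreasing} in the regularity index, i.e. $\|z\|^{\ell}_{\dot\B^{s_1}_{2,1}}\lesssim\|z\|^{\ell}_{\dot\B^{s_2}_{2,1}}$ whenever $s_1\geq s_2$. But you then conclude that ``$\|v\|^{\ell}_{\dot\B^{5/2}_{2,1}}$ controls $\|v\|^{\ell}_{\dot\B^{3/2}_{2,1}}$, so $\cH$ already provides full $\dot\B^{3/2}_{2,1}$ control on $v$.'' This is backwards: monotonicity gives $\|v\|^{\ell}_{\dot\B^{5/2}_{2,1}}\lesssim\|v\|^{\ell}_{\dot\B^{3/2}_{2,1}}$, not the converse, so $\cH$ only sees the \emph{weaker} quantity $\|v\|^{\ell}_{\dot\B^{5/2}_{2,1}}$, and the lower bound $\wt\cD\gtrsim\|v\|^{\ell}_{\dot\B^{3/2}_{2,1}}$ genuinely requires the corrector term. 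The right computation is the one you displayed but at the index $3/2$ rather than $5/2$:
$$
\|v\|^{\ell}_{\dot\B^{3/2}_{2,1}}\leq\|v+\d_xu\|^{\ell}_{\dot\B^{3/2}_{2,1}}+\|\d_xu\|^{\ell}_{\dot\B^{3/2}_{2,1}}
\lesssim\|v+\d_xu\|^{\ell}_{\dot\B^{1/2}_{2,1}}+\|u\|^{\ell}_{\dot\B^{5/2}_{2,1}}\lesssim\wt\cD.
$$
Bounding $\|v\|^{\ell}_{\dot\B^{5/2}_{2,1}}$, as you did, is unnecessary since that quantity is already part of $\cH$. Your final paragraph then asserts the correction is needed ``to recover the reverse bound,'' which again has the direction reversed.

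\textbf{The stated equivalence for $\wt\cD$ does not hold.} The upper bound $\wt\cD\lesssim\|u\|^{\ell}_{\dot\B^{5/2}_{2,1}}+\|u\|^h_{\dot\B^{3/2}_{2,1}}+\|v\|_{\dot\B^{3/2}_{2,1}}$ fails, because the corrector $\|v+\d_xu\|^{\ell}_{\dot\B^{1/2}_{2,1}}$ cannot be dominated by the right side: with $u\equiv0$ and $v$ spectrally concentrated near frequency $2^{-N}$ one has $\|v+\d_xu\|^{\ell}_{\dot\B^{1/2}_{2,1}}\approx 2^{-N/2}\|v\|_{L^2}$ while the right side is $\approx 2^{-3N/2}\|v\|_{L^2}$, so the ratio diverges as $N\to\infty$. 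What is actually true (and all that the subsequent decay argument uses) is the lower bound $\wt\cD\gtrsim\|(u,v)\|^{\ell}_{\dot\B^{5/2}_{2,1}}+\|(u,v)\|^h_{\dot\B^{3/2}_{2,1}}$, together with the two-sided equivalence for $\wt\cL$. You should prove and record only this one-sided estimate rather than attempting the converse, which is not available.
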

One can now tackle the proof of decay estimates. 
Let us denote 
$$C_0\triangleq \|(u_0,v_0)\|^\ell_{\dot\B^{-\sigma_1}_{2,\infty}}+\|(u_0,v_0)\|^h_{\dot\B^{\frac32}_{2,1}}.$$
As a first,  observe that interpolation for homogeneous Besov norms  gives us: 
 $$ \norme{(u,v)}_{\dot{\mathbb{B}}^{\frac{1}{2}}_{2,1}}^\ell \lesssim \left(\norme{(u,v)}^\ell_{\dot{\mathbb{B}}^{-\sigma_1}_{2,\infty}}\right)^{\theta_0}\left(\norme{(u,v)}^\ell_{\dot{\mathbb{B}}^{\frac{5}{2}}_{2,1}}\right)^{1-\theta_0} \with\theta_0\triangleq\frac{2}{5/2+\sigma_1}\cdotp$$
 Therefore, owing  to   Lemma \ref{LemmaDecayBf2}, there exists $c>0$ such that 
   $$ \norme{(u,v)}_{\dot{\mathbb{B}}^{\frac{5}{2}}_{2,1}}^\ell\geq c\,   C_0^{-\frac{\theta_0}{1-\theta_0}}\bigl(\norme{(u,v)}^\ell_{\dot{\mathbb{B}}^{\frac{1}{2}}_{2,1}}\bigr)^{\frac{1}{1-\theta_0}}.$$
   Note that our definition of $C_0$ and the estimates we proved for $(u,v)$ in the previous paragraph also ensure that 
   $$
   \|(u,v)\|_{\dot\B^{\frac32}_{2,1}}^h\gtrsim C_0^{-\frac{\theta_0}{1-\theta_0}}    \bigl(\|(u,v)\|_{\dot\B^{\frac32}_{2,1}}^h\bigr)^{\frac1{1-\theta_0}}\cdotp
   $$
   Hence, thanks to the above lemma, we have, 
$$\frac{d}{dt} \wt\cL +c C_0^{-\frac{\theta_0}{1-\theta_0}} \wt\cL^{\frac{1}{1-\theta_0}}\leq 0\with \wt\cL\simeq \|(u,v)\|_{\dot\B^{\frac12}_{2,1}\cap\dot\B^{\frac32}_{2,1}}.$$
Integrating, this gives us
$$\wt\cL(t)\leq \bigl(1+\kappa_0 \,t\bigr)^{1-\frac1{\theta_0}}\wt\cL(0)
\with \kappa_0\triangleq  c\frac{\theta_0}{1-\theta_0}\biggl(\frac{\wt\cL(0)}{C_0}\biggr)^{\frac{\theta_0}{1-\theta_0}}.$$
Rewriting $\theta_0$ in terms of $\sigma_1$ and using that
 $\|(u_0,v_0)\|_{\dot\B^{\frac12}_{2,1}\cap\dot\B^{\frac32}_{2,1}}\lesssim C_0,$ 
 we get
\begin{equation}\label{eq:sigma1}\|(u,v)(t)\|_{\dot\B^{\frac12}_{2,1}\cap\dot\B^{\frac32}_{2,1}}\leq C
 (1+ t)^{-\alpha_1}\|(u_0,v_0)\|_{\dot\B^{\frac12}_{2,1}\cap\dot\B^{\frac32}_{2,1}}\with
 \alpha_1\triangleq \frac12\Bigl(\sigma_1+\frac12\Bigr)\cdotp\end{equation}
In order to get the decay rate in $\dot{\mathbb{B}}^{\sigma}_{2,1}$ for all $\sigma\in\left(-\sigma_1,1/2\right)$, we just need the interpolation inequality 
$$\norme{(u,v)}_{\dot{\mathbb{B}}^{\sigma}_{2,1}}\leq\norme{(u,v)}_{\dot{\mathbb{B}}^{-\sigma_1}_{2,\infty}}^{\theta_1}
\norme{(u,v)}_{\dot{\mathbb{B}}^{\frac{1}{2}}_{2,1}}^{1-\theta_1} \with \theta_1\triangleq \frac{1/2-\sigma}{1/2+\sigma_1}\in(0,1).$$
In the end, we get (since $\kappa_0\lesssim1$):
 $$\norme{(u,v)(t)}_{\dot{\mathbb{B}}^{\sigma}_{2,1}}\leq C(1+\kappa_0 t)^{-\frac{\sigma+\sigma_1}{2}}
\|(u_0,v_0)\|_{\dot\B^{-\sigma_1}_{2,\infty}}^{\theta_1}\|(u_0,v_0)\|_{\dot\B^{\frac12}_{2,1}\cap\dot\B^{\frac32}_{2,1}}^{1-\theta_1}.$$
In order to  improve  the decay for the damped component $v,$ let us start from 
$$\partial_t v + v = -\frac{1}{2}\partial_x(v^2) -\partial_xu.$$
As  $v_0^\ell$ is in $\dot\B^{-\sigma_1}_{2,\infty}$ for some $\sigma_1\in]-\frac12,\frac12],$ we get
\begin{equation}\label{eq:new}\norme{v(t)}^\ell_{\dot{\mathbb{B}}^{-\sigma_1}_{2,\infty}}\leq e^{-t}\norme{v_0}^\ell_{\dot{\mathbb{B}}^{-\sigma_1}_{2,\infty}}+\int_0^te^{-(t-\tau)}\norme{(\partial_xv^2,\partial_xu)}^\ell_{\dot{\mathbb{B}}^{-\sigma_1}_{2,\infty}}\,d\tau.\end{equation}
It is important to  observe that, as $1-\sigma_1\geq 1/2,$
\begin{equation}\label{eq:dxz}\norme{\d_xz}^\ell_{\dot{\mathbb{B}}^{-\sigma_1}_{2,\infty}}\lesssim \|z\|^\ell_{\dot\B^{\frac12}_{2,1}}.\end{equation}
Hence, 
multiplying \eqref{eq:new}  by $\langle t \rangle ^{\alpha_1}$  and using the product
laws in Besov spaces recalled in Proposition \ref{LP} yields:
$$\norme{\langle t \rangle ^{\alpha_1}v(t)}^\ell_{\dot{\mathbb{B}}^{-\sigma_1}_{2,\infty}}\leq \norme{v_0}^\ell_{\dot{\mathbb{B}}^{-\sigma_1}_{2,\infty}}
+\int_0^t\langle t \rangle ^{\alpha_1}e^{-(t-\tau)}\norme{u}^\ell_{\dot{\mathbb{B}}^{1-\sigma_1}_{2,\infty}}\,d\tau
+\int_0^t\langle t \rangle ^{\alpha_1}e^{-(t-\tau)}\norme{v}_{\dot{\mathbb{B}}^{\frac12}_{2,1}}^2\,d\tau,$$
and one can conclude as above that 
$$\langle t\rangle^{\alpha_1}\|v(t)\|_{\dot\B^{-\sigma_1}_{2,\infty}}\lesssim  \|(u_0,v_0)\|_{\dot\B^{-\sigma_1}_{2,\infty}}+ \|(u_0,v_0)\|_{\dot\B^{\frac12}_{2,1}\cap\dot\B^{\frac32}_{2,1}}.$$
Let us finally  exhibit the (optimal) decay rate of the high frequencies for the norm in $\dot{\mathbb{B}}^{\frac{3}{2}}_{2,1}$.
Recall that for $j\geq0,$ we have 
$$
\frac{d}{dt}\mathcal{L}^2_j+c\cL_j^2 \lesssim\cL_j^2\norme{\partial_xv}_{L^\infty}+c_j2^{-\frac{j}{2}} X\norme{\partial_xv}_{\mathbb{\dot{B}}^{\frac{1}{2}}_{2,1}}\cL_j.$$
Hence, using Lemma~\ref{SimpliCarre},  multiplying by $2^{\frac j2},$ summing up on $j\geq0$ and remembering that 
$$\sum_{j\geq0}2^{\frac j2}\cL_j\simeq \|(u,v)\|^h_{\dot\B^{\frac32}_{2,1}},$$  we get
\begin{equation}\label{eq:decayhf}
\norme{(u,v)(t)}^h_{\dot{\mathbb{B}}^{\frac{3}{2}}_{2,1}}\lesssim e^{-ct}\norme{(u_0,v_0)}^h_{\dot{\mathbb{B}}^{\frac{3}{2}}_{2,1}}+\int_0^te^{-c(t-\tau)}\norme{v}_{\dot{\mathbb{B}}^{\frac{3}{2}}_{2,1}}\norme{(u,v)}_{\dot{\mathbb{B}}^{\frac{3}{2}}_{2,1}}.
\end{equation}
Multiplying both sides by $\langle t \rangle^{2\alpha_1},$ we get
$$\displaylines{\norme{\langle t\rangle^{2\alpha_1}(u,v)(t)}^h_{\dot{\mathbb{B}}^{\frac{3}{2}}_{2,1}}\lesssim \langle t \rangle^{2\alpha_1}e^{-ct}\norme{(u_0,v_0)}^h_{\dot{\mathbb{B}}^{\frac{3}{2}}_{2,1}}
\hfill\cr\hfill+\int_0^t\biggl(\frac{\langle t \rangle}{\langle\tau\rangle}\biggr)^{2\alpha_1} e^{-c(t-\tau)} \bigl(\langle\tau\rangle^{\alpha_1}\norme{v}_{\dot{\mathbb{B}}^{\frac{3}{2}}_{2,1}}\bigr)
 \bigl(\langle\tau\rangle^{\alpha_1}\norme{(u,v)}_{\dot{\mathbb{B}}^{\frac{3}{2}}_{2,1}}\bigr).}$$
Taking advantage of  \eqref{eq:sigma1} for bounding the norms in the time integral, one ends up with 
the desired decay estimate for $\norme{(u,v)(t)}^h_{\dot{\mathbb{B}}^{\frac{3}{2}}_{2,1}}.$
This completes the proof of Theorem \ref{ThmDecayTM}.
\begin{Rmq} In the same way, making the slightly stronger assumption that $v_0^\ell\in\dot\B^{-\frac12}_{2,1},$ we get $$\langle t\rangle^{\alpha_1}\|v(t)\|_{\dot\B^{-\frac12}_{2,1}}\lesssim   \|v_0\|_{\dot\B^{-\frac12}_{2,1}}+ \|(u_0,v_0)\|_{\dot\B^{\frac12}_{2,1}\cap\dot\B^{\frac32}_{2,1}}.$$
\end{Rmq}


\section{Proof of Theorem \ref{ThmExistLp}}\label{s:Lp}

An explicit computation in the Fourier space of the solution 
to  $(LTM)$  with $w=0$ reveals that:
\begin{itemize}
\item In low frequencies, the matrix of the system corresponding to frequency $\xi$
has two real eigenvalues that tend to be equal to $1$ and to $\xi^2,$ for $\xi$ going to $0$;
\item In high frequencies, two complex conjugated eigenvalues coexist, 
that are, asymptotically, equal to $\frac12(\xi^2\pm i\xi).$ 
\end{itemize}
Consequently, one can expect that the low frequency part of System $(TM)$ 
is solvable in some $L^p$ type functional framework  with, possibly,  $p\not=2,$ whereas
going beyond the $L^2$ framework in high frequency is bound to fail. 
A similar dichotomy has been observed for the compressible Navier-Stokes 
equations (see in particular \cite{NSCLP,CMZ,Boris}) but  the behavior 
of the low and high frequencies in our situation is  exchanged. 

In order to extend the results of the previous section to the $L^p$ framework 
for low frequencies, we shall adapt  \cite{Boris} to our setting,  introducing an `effective velocity' that  reads $z=v+\partial_xu$
and may be seen as  an approximate  dissipative eigenmode of the system, in the low frequency regime. 
 
\medbreak

The bulk of the proof  consists in establishing estimates 
in the functional framework of Theorem  \ref{ThmExistLp} for $(LTM).$ This will be carried out in the first two subsections of this part. 
Then, we will prove the existence part of the theorem and, finally, 
the uniqueness of a solution.

\subsection{Low frequencies estimates in \texorpdfstring{$L^p$}{TEXT}}

The main result of this section reads as follows. 
\begin{Prop} \label{APLP}
Let $(u,v)$ be a smooth solution of  $(LTM)$  on $[0,T].$ Then,  for all $1\leq p<\infty,$ we have 
\begin{multline}
\norme{(u,v)(t)}_{\dot{\mathbb{B}}^{\frac{1}{p}}_{p,1}}^\ell
+\int_0^t\norme{u}_{\dot{\mathbb{B}}^{\frac{1}{p}+2}_{p,1}}^\ell+\int_0^t\norme{v+\d_xu}_{\dot{\mathbb{B}}^{\frac{1}{p}}_{p,1}}^\ell \\
\leq C\biggl(\norme{(u_0,v_0)}^\ell_{\dot{\mathbb{B}}^{\frac{1}{p}}_{p,1}}
+\int_0^t\norme{(u,v,\d_xu)}_{\dot{\mathbb{B}}^{\frac{1}{p}}_{p,1}}
\norme{w}_{\dot{\mathbb{B}}^{\frac{1}{p}+1}_{p,1}}\biggr)\cdotp\label{EQLP}  \end{multline}
\end{Prop}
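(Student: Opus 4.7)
The key idea is to decouple the dissipative and diffusive behaviors of $(LTM)$ at low frequencies by introducing the \emph{effective velocity} $z\triangleq v+\d_xu$, an approximate damped eigenmode of the linearised system (in the Fourier picture, $z$ annihilates the parabolic eigenvector at leading order in $\xi$). Differentiating in $x$ the first equation of $(LTM)$, adding to the second, and eliminating $v$ via $v=z-\d_xu$ in the first equation, one obtains the heat--damped-transport coupling
$$\d_tu-\d_x^2u=-w\d_xu-\d_xz\andf \d_tz+w\d_xz+z=-\d_x^2v-\d_xw\,\d_xu.$$
The first equation will deliver the parabolic gain $\int_0^t\norme{u}^{\ell}_{\dot\B^{\frac1p+2}_{p,1}}$, while the damped structure of the second will yield the integrability $\int_0^t\norme{z}^{\ell}_{\dot\B^{\frac1p}_{p,1}}$.

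The plan is then to apply $\ddj$ to each equation and run the standard spectrally localised $L^p$ estimates: a parabolic Duhamel bound on $u_j$ giving $\norme{u_j(t)}_{L^p}+c\,2^{2j}\int_0^t\norme{u_j}_{L^p}\lesssim\norme{u_j(0)}_{L^p}+\int_0^t(\norme{\ddj(w\d_xu)}_{L^p}+2^j\norme{z_j}_{L^p})$, and a damped-transport bound on $z_j$ (obtained by multiplying by $|z_j|^{p-2}z_j$ and integrating by parts in the transport term) giving $\norme{z_j(t)}_{L^p}+\int_0^t\norme{z_j}_{L^p}\lesssim\norme{z_j(0)}_{L^p}+\int_0^t(\norme{\d_xw}_{L^\infty}\norme{z_j}_{L^p}+\norme{\ddj\d_x^2v}_{L^p}+\norme{\ddj(\d_xw\,\d_xu)}_{L^p}+\norme{[\ddj,w]\d_xz}_{L^p})$. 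After weighting by $2^{j/p}$ and summing over $j\leq J_0$, Bernstein's inequality turns $\norme{\d_xz}^{\ell}_{\dot\B^{\frac1p}_{p,1}}$ into $2^{J_0}\norme{z}^{\ell}_{\dot\B^{\frac1p}_{p,1}}$, and, using $v=z-\d_xu$, gives the key bound
$$\norme{\d_x^2v}^{\ell}_{\dot\B^{\frac1p}_{p,1}}\leq C\,2^{2J_0}\norme{z}^{\ell}_{\dot\B^{\frac1p}_{p,1}}+C\,2^{J_0}\norme{u}^{\ell}_{\dot\B^{\frac1p+2}_{p,1}}.$$
The commutator $[\ddj,w]\d_xz$ is handled by the appendix estimate \eqref{eq:com1}, producing a contribution of the form $\norme{w}_{\dot\B^{\frac1p+1}_{p,1}}\norme{z}_{\dot\B^{\frac1p}_{p,1}}$, and the paraproduct terms by the Banach-algebra structure of $\dot\B^{\frac1p}_{p,1}$ (Proposition~\ref{LP}); both fit into the target form $\norme{w}_{\dot\B^{\frac1p+1}_{p,1}}\norme{(u,v,\d_xu)}_{\dot\B^{\frac1p}_{p,1}}$.

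To close, I would form the combination $\mathcal{L}(t)\triangleq\norme{u(t)}^{\ell}_{\dot\B^{\frac1p}_{p,1}}+\eta\norme{z(t)}^{\ell}_{\dot\B^{\frac1p}_{p,1}}$ and tune $\eta$ so that the $z\!\to\!u$ cross term $C\,2^{J_0}\!\int\!\norme{z}^{\ell}$ is absorbed into $\eta\!\int\!\norme{z}^{\ell}$ (requires $\eta\gtrsim 2^{J_0}$) and the $u\!\to\!z$ cross term $\eta C\,2^{J_0}\!\int\!\norme{u}^{\ell}_{\dot\B^{\frac1p+2}_{p,1}}$ is absorbed into $c\!\int\!\norme{u}^{\ell}_{\dot\B^{\frac1p+2}_{p,1}}$ (requires $\eta\lesssim 2^{-J_0}$). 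Both windows are compatible, and the residual $\eta C\,2^{2J_0}\!\int\!\norme{z}^{\ell}$ contribution is also absorbed into the LHS, provided $2^{2J_0}$ is smaller than a $p$-dependent absolute constant. This restriction on $J_0$ is precisely what fixes the integer $k=k(p)$ in Theorem~\ref{ThmExistLp} (after the rescaling of Section~\ref{s:L2}, which sends the threshold $J_\lambda$ to a fixed $k$). The main technical obstacle is therefore the handling of the source term $\d_x^2v$ in the $z$-equation: unlike in the $L^2$ case of Section~\ref{s:L2}, where the global Lyapunov functional naturally provides parabolic smoothing for $v$, here the natural Bernstein bound introduces a $2^{2J_0}$ penalty that can only be absorbed by confining oneself to a sufficiently narrow low-frequency regime.
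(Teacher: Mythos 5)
Your proof follows essentially the same path as the paper: you introduce the effective velocity $z=v+\d_xu$, derive the coupled heat/damped-transport system, run spectrally localized $L^p$ energy estimates (multiplying by $|u_j|^{p-2}u_j$, resp.\ $|z_j|^{p-2}z_j$), and absorb the cross terms $\int\norme{z}^{\ell}_{\dot\B^{1/p+1}_{p,1}}$, $\int\norme{u}^{\ell}_{\dot\B^{1/p+3}_{p,1}}$, $\int\norme{z}^{\ell}_{\dot\B^{1/p+2}_{p,1}}$ by the low-frequency Bernstein inequality, which is exactly the paper's use of \eqref{eq:J0}. The only cosmetic difference is your tunable weight $\eta$ in the combination $\norme{u}^{\ell}+\eta\norme{z}^{\ell}$: the paper simply adds \eqref{EQU} and \eqref{EQZ} with equal weight (i.e.\ $\eta=1$), which already falls in your window $[C2^{J_0},C^{-1}2^{-J_0}]$ once $J_0$ is taken negative enough, so the extra degree of freedom is not needed.
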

\begin{proof} 
Let us set   $z\triangleq v+\partial_xu.$  We observe that the couple $(u,z)$  satisfies  
\begin{equation} \left\{ \begin{aligned}
&\partial_tu-\partial_{xx}^2u+w\partial_xu=-\partial_xz,\\ 
&\partial_tz +z +w\partial_xz=\partial_{xxx}^3u-\partial_{xx}^2z-\partial_{x}w\,\partial_{x}u.
 \end{aligned} \right. \label{TMLP}
\end{equation}
In low frequencies, we expect  the linear terms of the right-hand side  to be negligible,
so that we will look at  the first equation as a heat equation with a convection term,
and at the second one as  a damped transport equation.
\smallbreak
Now, applying $\dot{\Delta}_j$ to the first equation of \eqref{TMLP} yields
$$\begin{aligned}
\partial_tu_j-\partial_{xx}^2u_j&=-\dot{\Delta}_j(w\partial_xu)-\partial_xz_j \\ 
&=-w\partial_xu_j-\partial_xz_j+[w,\dot{\Delta}_j]\partial_xu.
\end{aligned}$$
Multiplying
 by $|u_j|^{p-2}u_j$ and integrating in space,  we get
 $$\displaylines{\frac{1}{p}\frac{d}{dt}\norme{u_j}_{L^p}^p-\int_\mathbb{R}\partial_{xx}^2u_j|u_j|^{p-2}u_j\hfill\cr\hfill=-\int_\mathbb{R}\partial_xz_j|u_j|^{p-2}u_j -\int_{\mathbb{R}}w\partial_xu_j\,|u_j|^{p-2}u_j
 +\int_\R[w,\dot{\Delta}_j]\partial_xu\,|u_j|^{p-2}u_j.}$$
 Hence,  integrating by parts, using Cauchy-Schwarz inequality and
 Proposition \ref{p:bernstein} gives
 $$\frac{1}{p}\frac{d}{dt}\norme{u_j}_{L^p}^p+c_p2^{2j}\norme{u_j}_{L^p}^p \leq\frac1p\norme{\partial_xw}_{L^\infty}\norme{u_j}^p_{L^p}+\left(\norme{\partial_xz_j}_{L^p}+\norme{ [w,\dot{\Delta}_j]\d_xu}_{L^p}\right)\norme{u_j}_{L^p}^{p-1}.$$
 Multiplying by $2^{\frac{j}{p}}$, summing up on $j\leq J_0$ and using Lemma \ref{SimpliCarre}, we obtain
$$\displaylines{
\norme{u(t)}_{\dot{\mathbb{B}}^{\frac{1}{p}}_{p,1}}^\ell+c_p\int_0^t\norme{u}_{\dot{\mathbb{B}}^{\frac{1}{p}+2}_{p,1}}^\ell\hfill\cr\hfill\leq  \norme{u_0}_{\dot{\mathbb{B}}^{\frac{1}{p}}_{p,1}}^\ell+\int_0^t\norme{z}_{\dot{\mathbb{B}}^{\frac{1}{p}+1}_{p,1}}^\ell 
+\frac1p\int_0^t\norme{\partial_xw}_{L^\infty}\norme{u}_{\dot{\mathbb{B}}^{\frac{1}{p}}_{p,1}}^\ell+\sum_{j\leq J_0}2^{\frac{j}{p}}\int_0^t\norme{[w,\dot{\Delta}_j]\partial_xu}_{L^p}.}$$
 The commutator term may be bounded according to  Inequality \eqref{eq:com1} with $s=1/p.$ 
 Hence, remembering that  $\dot{\mathbb{B}}^{\frac{1}{p}}_{p,1}\hookrightarrow L^\infty$, we end up with
\begin{equation}
\norme{u(t)}_{\dot{\mathbb{B}}^{\frac{1}{p}}_{p,1}}^\ell+c_p\int_0^t\norme{u}_{\dot{\mathbb{B}}^{\frac{1}{p}+2}_{p,1}}^\ell\leq \norme{u_0}_{\dot{\mathbb{B}}^{\frac{1}{p}}_{p,1}}^\ell+\int_0^t\norme{z}_{\dot{\mathbb{B}}^{\frac{1}{p}+1}_{p,1}}^\ell +C\int_0^t\norme{w}_{\dot{\mathbb{B}}^{\frac{1}{p}+1}_{p,1}}\norme{u}_{\dot{\mathbb{B}}^{\frac{1}{p}}_{p,1}}. \label{EQU}
\end{equation}
Let us next look at the second equation of  \eqref{TMLP}. 
We have for all $j\in\Z,$  
\begin{equation*}
\partial_tz_j +z_j +w\partial_xz_j=\partial_{xxx}^3u_j-\partial_{xx}^2z_j-\dot{\Delta}_j(\partial_xw\,\partial_xu)+[w,\ddj]\partial_{x}z.
\end{equation*}
Multiplying by $z_j|z_j|^{p-2}$ and adapting what we did for the for the first equation of $(LTM),$  we obtain
$$\displaylines{
\norme{z(t)}_{\dot{\mathbb{B}}^{\frac{1}{p}}_{p,1}}^\ell+\int_0^t\norme{z}_{\dot{\mathbb{B}}^{\frac{1}{p}}_{p,1}}^\ell\leq  \norme{z_0}_{\dot{\mathbb{B}}^{\frac{1}{p}}_{p,1}}^\ell+\int_0^t\norme{z}_{\dot{\mathbb{B}}^{\frac{1}{p}+2}_{p,1}}^\ell+\int_0^t\norme{u}_{\dot{\mathbb{B}}^{\frac{1}{p}+3}_{p,1}}^\ell + \frac1p\int_0^t\norme{\partial_xw}_{L^\infty}\norme{z}^\ell_{\dot{\mathbb{B}}^{\frac{1}{p}}_{p,1}}
\hfill\cr\hfill +\int_0^t\sum_{j\leq J_0}2^{\frac{j}{p}}\norme{[w,\ddj]\d_xz}_{L^p}+\int_0^t\norme{\partial_xw\,\partial_xu}^\ell_{\dot{\mathbb{B}}^{\frac{1}{p}}_{p,1}}.}$$
Combining Proposition \ref{C1}, the commutator estimate \eqref{eq:com1}, the embedding $\dot{\mathbb{B}}^{\frac{1}{p}}_{p,1}\hookrightarrow L^\infty$ and  Proposition \ref{LP}, we discover that
\begin{multline}
\norme{z(t)}_{\dot{\mathbb{B}}^{\frac{1}{p}}_{p,1}}^\ell+\int_0^t\norme{z}_{\dot{\mathbb{B}}^{\frac{1}{p}}_{p,1}}^\ell\leq \norme{z_0}_{\dot{\mathbb{B}}^{\frac{1}{p}}_{p,1}}^\ell
+\int_0^t\norme{z}_{\dot{\mathbb{B}}^{\frac{1}{p}+2}_{p,1}}^\ell+\int_0^t\norme{u}_{\dot{\mathbb{B}}^{\frac{1}{p}+3}_{p,1}}^\ell\\+ \int_0^t \norme{w}_{\dot{\mathbb{B}}^{\frac{1}{p}+1}_{p,1}}\norme{z}_{\dot{\mathbb{B}}^{\frac{1}{p}}_{p,1}}
+\int_0^t\norme{\partial_xw}_{\dot{\mathbb{B}}^{\frac{1}{p}}_{p,1}}\norme{\partial_xu}_{\dot{\mathbb{B}}^{\frac{1}{p}}_{p,1}}. \label{EQZ}
\end{multline}
At this stage, the key observation is that, owing to Bernstein inequality, there
exists an absolute constant $C$ such that for any  couple $(\sigma,\sigma')\in\R^2$ with $\sigma\leq\sigma',$  we have
\begin{equation}\label{eq:J0}
\|f\|_{\dot\B^{\sigma'}_{p,1}}^\ell \leq C2^{J_0(\sigma'-\sigma)} \|f\|_{\dot\B^{\sigma}_{p,1}}^\ell. 
\end{equation}
Consequently, if $J_0$ is chosen small enough, then after 
adding up \eqref{EQU} and \eqref{EQZ}, we just get
$$\displaylines{\norme{(u,z)(t)}_{\dot{\mathbb{B}}^{\frac{1}{p}}_{p,1}}^\ell+
\int_0^t\bigl(\norme{u}_{\dot{\mathbb{B}}^{\frac{1}{p}+2}_{p,1}}^\ell+\norme{z}_{\dot{\mathbb{B}}^{\frac{1}{p}}_{p,1}}^\ell\bigr) \lesssim \norme{(u_0,z_0)}_{\dot{\mathbb{B}}^{\frac{1}{p}}_{p,1}}^\ell
\hfill\cr\hfill+\int_0^t  \norme{w}_{\dot{\mathbb{B}}^{\frac{1}{p}+1}_{p,1}}\bigl(
 \norme{u}_{\dot{\mathbb{B}}^{\frac{1}{p}+1}_{p,1}}+ \norme{u}_{\dot{\mathbb{B}}^{\frac{1}{p}}_{p,1}}
 + \norme{z}_{\dot{\mathbb{B}}^{\frac{1}{p}}_{p,1}}\bigr)\cdotp}$$
 Because
 $$
  \norme{z}_{\dot{\mathbb{B}}^{\frac{1}{p}}_{p,1}}\lesssim 
   \norme{u}_{\dot{\mathbb{B}}^{\frac{1}{p}+1}_{p,1}}+
  \norme{v}_{\dot{\mathbb{B}}^{\frac{1}{p}}_{p,1}}\andf
    \norme{v}_{\dot{\mathbb{B}}^{\frac{1}{p}}_{p,1}}^\ell\lesssim 
      \norme{z}_{\dot{\mathbb{B}}^{\frac{1}{p}}_{p,1}}^\ell+ 
   \norme{u}_{\dot{\mathbb{B}}^{\frac{1}{p}}_{p,1}}^\ell,
$$
we conclude to the desired inequality. 
\end{proof}


\subsection{High frequencies estimates in \texorpdfstring{$L^2$}{TEXT}}

Our second task is to bound the high frequencies of the solution of $(LTM).$
Although the functional framework for high frequencies  is the same as before, one cannot 
repeat exactly the computations therein since the terms $(w\d_xu)^h$ and $(w\d_xv)^h$ contain 
 a little amount of low frequencies of $w,$ $u$ and $v,$ that are only in spaces  of the type $\dot{\mathbb{B}}^{s}_{p,1}$ with $p>2$ (and thus not in some $\dot{\mathbb{B}}^{s'}_{2,1}$).
 To overcome the difficulty, we have to study more  carefully the  commutators
 coming into play in the proof  (see Lemma \ref{CP}). 
  \begin{Prop} \label{PropHfLp}
Let $(u,v)$ be of solution of $(LTM)$ with $u_0^\ell,v_0^\ell\in{\dot{\mathbb{B}}^{\frac{1}{p}}_{p,1}}$ and $u_0^h,v_0^h\in{\dot{\mathbb{B}}^{\frac{3}{2}}_{2,1}}$
for some $2\leq p\leq 4.$ Define $p^*$ by the relation $1/p+1/p^*=1/2.$ 
Then, the following a priori estimate holds for some constant $C$ depending only on $J_0$:
$$\displaylines{
\norme{(u,v)(t)}^h_{\dot{\mathbb{B}}^{\frac{3}{2}}_{2,1}}+\int_0^t\norme{(u,v)}^h_{\dot{\mathbb{B}}^{\frac{3}{2}}_{2,1}}\lesssim\norme{(u_0,v_0)}^h_{\dot{\mathbb{B}}^{\frac{3}{2}}_{2,1}} + \int_0^t\bigl(\norme{\partial_xw}_{L^\infty}\norme{(u,v)}^h_{\dot{\mathbb{B}}^{\frac{3}{2}}_{2,1}}\hfill\cr\hfill
+\norme{w}^\ell_{\dot{\mathbb{B}}^{1+\frac{1}{p^*}}_{p^*,1}}\norme{(\partial_xu,\partial_xv)}_{\dot{\mathbb{B}}^{\frac{1}{p}-1}_{p,1}}+\norme{(\partial_xu,\partial_xv)}_{L^\infty}\norme{w}^h_{\dot{\mathbb{B}}^{\frac{3}{2}}_{2,1}}+\norme{(\partial_xu,\partial_xv)}^\ell_{\dot{\mathbb{B}}^{1-\frac{1}{p}}_{p,1}}\norme{\d_xw}^\ell_{\dot{\mathbb{B}}^{-\frac{1}{p^*}}_{p^*,1}}\bigr)\cdotp}$$
\end{Prop}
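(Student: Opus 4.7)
The strategy is to adapt the Lyapunov functional approach of Section \ref{s:L2} to the high frequency regime $j\geq J_0-1$, where the damping coefficient $\min(1,2^{2j})$ appearing in \eqref{eq:Xj} is uniformly bounded below and each dyadic block is thus exponentially dissipated. Applying $\ddj$ to $(LTM)$ and using $\ddj(w\d_xz)=w\d_xz_j+[\ddj,w]\d_xz$ for $z\in\{u,v\}$, the same four integration-by-parts identities as in Section \ref{s:L2} should yield, for the functional $\cL_j$ defined by \eqref{eq:Lj},
$$\tfrac12\tfrac d{dt}\cL_j^2+c\,\cL_j^2\lesssim\bigl(\|\d_xw\|_{L^\infty}\cL_j+\|[\ddj,w]\d_xu\|_{L^2}+\|[\ddj,w]\d_xv\|_{L^2}\bigr)\cL_j.$$
Applying Lemma \ref{SimpliCarre}, multiplying by $2^{3j/2}$, summing on $j\geq J_0-1$ and using the equivalence $\sum_{j\geq J_0-1}2^{3j/2}\cL_j\simeq\|(u,v)\|^h_{\dot\B^{3/2}_{2,1}}$ inherited from \eqref{eq:Xj}, the proof reduces to suitably controlling the commutator sum by the right-hand side of the proposition.

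The main obstacle is that one cannot invoke the standard commutator estimate \eqref{eq:com1} with $s=1/2$ as in Section \ref{s:L2}: it would require $w\in\dot\B^{1/2}_{2,1}$ globally, whereas here only $w^h\in\dot\B^{3/2}_{2,1}\hookrightarrow L^\infty$, while $w^\ell$ may only live in some $\dot\B^{s}_{p,1}$ with $p>2$. Likewise, $\d_xu$ and $\d_xv$ have different Besov regularity in low and high frequencies, so using a single norm for each is wasteful. The resolution, encapsulated in the commutator Lemma \ref{CP} announced in the statement, is to split $w=w^\ell+w^h$ and $z=z^\ell+z^h$ inside $[\ddj,w]\d_xz$ and to estimate each of the four cross pieces through Bony's decomposition $w\d_xz=T_w\d_xz+T_{\d_xz}w+R(w,\d_xz)$. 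Each piece is then bounded via a tailored Hölder pairing: whenever a low-frequency factor of $w$ meets a factor of $z$ (or vice versa), the natural pairing is the conjugate $(p^*,p)$ with $1/p+1/p^*=1/2$, chosen precisely so that the product lands in $L^2$.

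This decomposition produces exactly the four families of terms in the statement. The contribution $\|\d_xw\|_{L^\infty}\|(u,v)\|^h_{\dot\B^{3/2}_{2,1}}$ gathers the boundary term $-\tfrac12\int\d_xw\,|z_j|^2$ coming from the transport integrations by parts together with the commutator pieces involving high frequencies of $w$ acting on high frequencies of $z$. The mixed term $\|(\d_xu,\d_xv)\|_{L^\infty}\|w\|^h_{\dot\B^{3/2}_{2,1}}$ collects the paraproduct and remainder pieces in which the high-frequency block of $w$ multiplies a low-frequency block of $z$, handled through $\dot\B^{3/2}_{2,1}\hookrightarrow L^\infty$ on the other factor. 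Finally, the low-low cross interactions produce
$$\|w\|^\ell_{\dot\B^{1+1/p^*}_{p^*,1}}\|(\d_xu,\d_xv)\|_{\dot\B^{1/p-1}_{p,1}}\andf\|\d_xw\|^\ell_{\dot\B^{-1/p^*}_{p^*,1}}\|(\d_xu,\d_xv)\|^\ell_{\dot\B^{1-1/p}_{p,1}}.$$
The hardest technical step is establishing Lemma \ref{CP} with exactly these four norms and no spurious ones: it requires a careful accounting of where each low/high splitting lands within the paraproduct/remainder pieces, and relies on the restriction $2\leq p\leq 4$ to keep every Hölder pairing admissible.
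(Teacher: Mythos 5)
Your overall direction is right, but two concrete steps in your sketch do not hold up as stated, and both are resolved in the paper by a different localization than the one you use. You decompose $\ddj(w\d_xz)=w\d_xz_j+[\ddj,w]\d_xz$ and then invoke Lemma~\ref{CP} to control $\|[\ddj,w]\d_xz\|_{L^2}$; however, Lemma~\ref{CP} estimates the quantity $\cR_j\triangleq \dot S_{j-1}w\,\d_x\ddj z-\ddj(w\,\d_xz)$, not $[\ddj,w]\d_xz = w\,\d_x\ddj z - \ddj(w\d_xz)$. The discrepancy $({\rm Id}-\dot S_{j-1})w\,\d_xz_j$ can in fact be absorbed (it contributes $\lesssim 2^{-j}\|\d_xw\|_{L^\infty}\|\d_xz_j\|_{L^2}$, hence after multiplying by $2^{3j/2}$ a term controlled by $\|\d_xw\|_{L^\infty}\|(u,v)\|^h_{\dot\B^{3/2}_{2,1}}$), but this reconciliation is missing and must be made explicit. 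Second, your claimed differential inequality for the Section~\ref{s:L2} functional $\cL_j$ (which contains $\|\d_xu_j\|^2_{L^2}+\|\d_xv_j\|^2_{L^2}$) does not follow from ``the same four integration-by-parts identities'' with only $[\ddj,w]\d_xu$ and $[\ddj,w]\d_xv$ on the right-hand side; those identities in Section~\ref{s:L2} also produce $\d_x([\ddj,w]\d_xz)$, which would have to be estimated separately, and that would introduce extra commutator norms not present in your bound.

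The paper avoids both issues by changing the localization and the functional. The localized system is written with the transport coefficient $\dot S_{j-1}w$ rather than $w$, i.e. $\d_tu_j+\d_xv_j+\dot S_{j-1}w\,\d_xu_j=R_j^1$ with $R_j^1=\cR_j$ (and similarly for $v$), so the remainder matches Lemma~\ref{CP} exactly. The functional used for high frequencies is $\wt\cL_j^2\triangleq\|(\d_xu_j,\d_xv_j)\|^2_{L^2}+\int_\R v_j\,\d_xu_j$, which drops the $\|u_j\|^2+\|v_j\|^2$ part (redundant for $j\geq J_0$) and permits one clean chain of integrations by parts in which the only coefficient appearing is $\d_x\dot S_{j-1}w$, bounded by $\|\d_xw\|_{L^\infty}$. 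The energy inequality then reads $\wt\cL_j(t)+\int_0^t\wt\cL_j\leq C\bigl(\int_0^t\|\d_xw\|_{L^\infty}\wt\cL_j+2^j\int_0^t\|(R_j^1,R_j^2)\|_{L^2}\bigr)$, and one multiplies by $2^{j/2}$, sums over $j\geq J_0$, and applies Lemma~\ref{CP} with $s=3/2$ to the term $\sum_j 2^{3j/2}\|(R_j^1,R_j^2)\|_{L^2}$. If you rewrite your proof along these lines, the gaps disappear; as it stands, the sketch conflates $\cR_j$ with $[\ddj,w]\d_xz$ and omits the $\d_x[\ddj,w]$ terms that your choice of $\cL_j$ would necessarily generate.
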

\begin{proof}
We localize  System $(LTM)$ by means of $\ddj,$ getting 
$$
 \left\{ \begin{matrix}\partial_tu_j + \partial_xv_j+\dot S_{j-1}w\,\partial_xu_j= R^1_j\\[1ex] \partial_tv_j+\partial_xu_j+v_j+\dot S_{j-1}w\,\partial_xv_j=R^2_j \end{matrix} \right.
$$ with 
 $$R^1_j\triangleq \dot S_{j-1}w\,\d_xu_j- \ddj(w\,\d_xu) \andf  
 R^2_j\triangleq\dot S_{j-1}w\,\d_xv_j- \ddj(w\,\d_xv).$$
The remainder terms $R^1_j$ and $R^2_j$ will be bounded according to Lemma \ref{CP}. 
To handle the left-hand side of the above localized system, we introduce
the following  functional, designed for high frequencies:
$$\wt\cL^2_j\triangleq\|(\d_xu_j,\d_xv_j)\|_{L^2}^2+\int_\R v_j\,\d_xu_j,$$
 and get
 $$\displaylines{\frac12\biggl(\frac{d}{dt}\wt\cL_j^2+\wt\cL_j^2\biggr)
 +\int_\R\d_x\bigl(\bigl(\dot S_{j-1}w\,\d_xu_j\bigr)\d_xu_j
+\d_x\bigl(\dot S_{j-1}w\,\d_xv_j\bigr)\d_xv_j\bigr)\hfill\cr\hfill
 +\!\int_\R\!\bigl(\dot S_{j-1}w\d_x v_j\d_xu_j+\d_x\bigl(\dot S_{j-1}w\d_xu_j\bigr)v_j\bigr)
 = \!\int_\R\!\bigl(\d_xR_j^1\d_xu_j+\d_xR_j^2\d_xv_j+\frac12\bigl(\d_xR_j^1v_j+R_j^2\d_{x}u_j\bigr)\bigr)\cdotp}
 $$
 Using integration by parts and multiplying by $2$ then yields
 $$\displaylines{\frac{d}{dt}\wt\cL_j^2\!+\!\wt\cL_j^2\!+\!\int_\R\d_x\dot S_{j-1}w\,\bigl((\d_xu_j)^2\!+\!(\d_xv_j)^2\bigr) = \int_\R\bigl(R_j^2\,\d_{x}u_j-R_j^1\,\d_xv_j-2R_j^1\,\d^2_xu_j-2R_j^2\,\d^2_xv_j\bigr)\cdotp} $$
{}From this, using Cauchy-Schwarz inequality, that
$\wt\cL_j\simeq \|(\d_xu_j,\d_xv_j)\|_{L^2}$ and Lemma~\ref{SimpliCarre},
we get for all $t\geq0$ and $j\geq J_0,$
$$\wt\cL_j(t)+\int_0^t\wt\cL_j\leq C\biggl(\int_0^t\|\d_xw\|_{L^\infty}\wt\cL_j+2^j\int_0^t\|R_j^1,R_j^2\|_{L^2}\biggr),$$ 
 with $C$ depending only on $J_0.$ Hence, multiplying by 
 $2^{\frac j2}$ and summing up on $j\geq J_0,$
\begin{multline}\label{eq:gmn}
\norme{(u,v)(t)}^h_{\dot{\mathbb{B}}^{\frac{3}{2}}_{2,1}}+\int_0^t\norme{(u,v)}^h_{\dot{\mathbb{B}}^{\frac{3}{2}}_{2,1}}\lesssim\norme{(u_0,v_0)}^h_{\dot{\mathbb{B}}^{\frac{3}{2}}_{2,1}}\\+\int_0^t\norme{\partial_xw}_{L^\infty}\norme{(u,v)}^h_{\dot{\mathbb{B}}^{\frac{3}{2}}_{2,1}}+\int_0^t\sum_{j\geq J_0}2^{\frac{3}{2}j}\norme{(R^1_j,R^2_j)}_{L^2}.
\end{multline}
At this stage, taking advantage of  Lemma \ref{CP}  with $s=3/2$ to bound the sum, 
we conclude to the desired inequality. 
\end{proof}



\subsection{Global a priori estimates for the toy model}
We are now ready  to establish the following proposition which is the key 
to the proof of the existence part of the theorem. 
\begin{Prop}\label{p:bound}
 Let $(u,v)$ be a smooth solution of $(TM)$  on $[0,T].$
 Then, still assuming that $2\leq p\leq4,$ there exists a constant $C$
 and an integer $J_0$ (corresponding to the threshold between low and high frequencies)
 such that for all $t\in[0,T],$ we have 
 $$X_p(t)\leq C\bigl(X_{p,0} + X_p^2(t)\bigr)$$
 with $X_{p,0}\triangleq \norme{(u_0,v_0)}^\ell_{\dot{\mathbb{B}}^{\frac{1}{p}}_{p,1}}
 +\norme{(u_0,v_0)}^h_{\dot{\mathbb{B}}^{\frac{3}{2}}_{2,1}}$ and 
 $$\displaylines{X_p(t)\triangleq\norme{(u,v)}^\ell_{L^\infty_t(\dot{\mathbb{B}}^{\frac{1}{p}}_{p,1})}+\norme{(u,v)}^h_{L^\infty_t(\dot{\mathbb{B}}^{\frac{3}{2}}_{2,1})}\hfill\cr\hfill+
\norme{u}^\ell_{L^1_t(\dot{\mathbb{B}}^{\frac{1}{p}+2}_{p,1})}
+\norme{v+\d_xu}_{L^1_t(\dot{\mathbb{B}}^{\frac{1}{p}}_{p,1})}+\norme{v}_{L^2_t(\dot{\mathbb{B}}^{\frac{1}{p}}_{p,1})}+\norme{(u,v)}^h_{L^1_t(\dot{\mathbb{B}}^{\frac{3}{2}}_{2,1})}.}$$
\end{Prop}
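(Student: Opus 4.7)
The plan is to combine three ingredients, all applied with $w=v$ to specialize to $(TM)$: the low-frequency estimate of Proposition~\ref{APLP}, the high-frequency estimate of Proposition~\ref{PropHfLp}, and a separate $L^2_t$ bound on the damped variable $v$ derived from its own equation.

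First, Proposition~\ref{APLP} with $w=v$ controls the low-frequency $L^\infty_t$ and $L^1_t$ pieces of $X_p$ in terms of $CX_{p,0}+C\int_0^t\norme{(u,v,\partial_xu)}_{\dot\B^{1/p}_{p,1}}\norme{v}_{\dot\B^{1/p+1}_{p,1}}\,d\tau$. I would bound the first factor in $L^\infty_t$ by $X_p$ using Bernstein in low frequencies to absorb the derivative on $u$ and the high-frequency embedding $\dot\B^{3/2}_{2,1}\hookrightarrow\dot\B^{1/p}_{p,1}$ (valid for $p\geq 2$). I would bound the second in $L^1_t$ by $X_p$ after writing $v=(v+\partial_xu)-\partial_xu$: the low frequencies are handled via $\norme{v+\partial_xu}^{\ell}_{L^1_t(\dot\B^{1/p}_{p,1})}$ and $\norme{u}^{\ell}_{L^1_t(\dot\B^{1/p+2}_{p,1})}$, and the high frequencies via $\norme{v}^{h}_{L^1_t(\dot\B^{3/2}_{2,1})}$. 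Hölder then gives the remainder $\lesssim X_p^2$. Next, Proposition~\ref{PropHfLp} with $w=v$ controls the high-frequency $L^\infty_t$ and $L^1_t$ pieces: its four nonlinear terms are all $\lesssim X_p^2$ by similar techniques, using the embedding $\dot\B^{1+1/p}_{p,1}\hookrightarrow \dot W^{1,\infty}$ for the $L^\infty$ factors and the low-frequency Bernstein embedding $\dot\B^{s}_{p^*,1}\hookrightarrow\dot\B^{s+1/p-1/p^*}_{p,1}$ (applicable since $p^*\geq p$ for $p\in[2,4]$) to convert the $L^{p^*}$ norms to $L^p$ norms.

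The last ingredient is $\norme{v}_{L^2_t(\dot\B^{1/p}_{p,1})}$. Localizing $\partial_tv+v=-\partial_xu-v\partial_xv$ by $\ddj$ and running the standard $L^p$ energy method yields $\frac{d}{dt}\|\ddj v\|_{L^p}+\|\ddj v\|_{L^p}\leq \|\ddj\partial_xu\|_{L^p}+\|\ddj(v\partial_xv)\|_{L^p}$ up to a small commutator term absorbed by $\|\partial_xv\|_{L^\infty}$. Duhamel combined with Cauchy--Schwarz in time then produces $\|\ddj v\|_{L^2_tL^p}^2\lesssim\|\ddj v_0\|_{L^p}^2+\|\ddj\partial_xu\|_{L^2_tL^p}^2+\|\ddj(v\partial_xv)\|_{L^2_tL^p}^2$, and summing with weights $2^{j/p}$ gives
\[
\norme{v}_{L^2_t(\dot\B^{1/p}_{p,1})}\lesssim\norme{v_0}_{\dot\B^{1/p}_{p,1}}+\norme{\partial_xu}_{L^2_t(\dot\B^{1/p}_{p,1})}+\norme{v\partial_xv}_{L^2_t(\dot\B^{1/p}_{p,1})}.
\]
The source $\norme{\partial_xu}_{L^2_t(\dot\B^{1/p}_{p,1})}$ is handled by the Chemin--Lerner interpolation $\norme{u}_{L^2_t(\dot\B^{1/p+1}_{p,1})}^{2}\leq\norme{u}_{L^\infty_t(\dot\B^{1/p}_{p,1})}\norme{u}_{L^1_t(\dot\B^{1/p+2}_{p,1})}$ in low frequencies and an analogous interpolation into $\dot\B^{3/2}_{2,1}$ in high frequencies; both factors being bounded by $C(X_{p,0}+X_p^2)$ from the previous step, so is the product. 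The nonlinear piece $\norme{v\partial_xv}_{L^2_t(\dot\B^{1/p}_{p,1})}\lesssim\norme{v}_{L^\infty_t(\dot\B^{1/p}_{p,1})}\norme{v}_{L^2_t(\dot\B^{1/p+1}_{p,1})}\lesssim X_p^2$ by the product law and the same interpolation applied to $v$.

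The main subtlety lies precisely in this last step: the linear source $\partial_xu$ in the $v$-equation would naively produce a linear-in-$X_p$ contribution to $\norme{v}_{L^2_t(\dot\B^{1/p}_{p,1})}$, which would break the target quadratic structure. The key observation is that the interpolation above controls $\norme{\partial_xu}_{L^2_t(\dot\B^{1/p}_{p,1})}$ by a product whose each factor is itself of size $(X_{p,0}+X_p^2)^{1/2}$ thanks to the preceding steps, so the final bound closes as $X_p(t)\leq C(X_{p,0}+X_p^2(t))$.
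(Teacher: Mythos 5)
Your proof is essentially correct and shares the same skeleton as the paper's: combine Proposition~\ref{APLP} and Proposition~\ref{PropHfLp} with $w=v$, then show all nonlinear remainders are $\lesssim X_p^2$ via the embeddings and the decomposition $v=(v+\partial_xu)-\partial_xu$ with Bernstein in low frequencies. The genuine difference is in how you handle $\norme{v}_{L^2_t(\dot\B^{1/p}_{p,1})}$: you re-run an $L^p$ energy/Duhamel estimate on the $v$-equation, producing a bound with the extra source $\norme{\partial_xu}_{L^2_t(\dot\B^{1/p}_{p,1})}$ and the nonlinear $\norme{v\partial_xv}_{L^2_t(\dot\B^{1/p}_{p,1})}$, which you then control by Chemin--Lerner interpolation. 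The paper instead observes that $\norme{v}_{L^2_t(\dot\B^{1/p}_{p,1})}$ is \emph{unconditionally} dominated by the other summands $\wt X_p$ of $X_p$: in high frequencies by Bernstein and $L^\infty_t$-$L^1_t$ interpolation of $\norme{v}^h_{\dot\B^{3/2}_{2,1}}$, and in low frequencies by writing $v^\ell=z^\ell-(\partial_xu)^\ell$ and interpolating $z^\ell$ between $L^\infty_t(\dot\B^{1/p}_{p,1})$ and $L^1_t(\dot\B^{1/p}_{p,1})$, and $(\partial_xu)^\ell$ between $L^\infty_t(\dot\B^{1/p}_{p,1})$ and $L^1_t(\dot\B^{1/p+2}_{p,1})$. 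This is cleaner: it makes the $L^2_t$ term redundant from the start (so $X_p\lesssim\wt X_p$) and sidesteps the extra nonlinear term $v\partial_xv$ entirely, whereas your version must also absorb it. The ``main subtlety'' you identify around the linear source $\partial_xu$ is therefore not really present in the paper's route, since there the $L^2_t$ piece never sits on the left of an inequality whose right side it could contaminate. Two small slips: (i) there is no commutator term in the $L^p$ estimate of $\partial_tv+v=-\partial_xu-v\partial_xv$ once you treat $v\partial_xv$ as a full source, so the mention of one is superfluous; (ii) your Bernstein embedding $\dot\B^{s}_{p^*,1}\hookrightarrow\dot\B^{s+1/p-1/p^*}_{p,1}$ has the arrow reversed --- the correct low-frequency statement is $\norme{\cdot}^\ell_{\dot\B^{s}_{p^*,1}}\lesssim\norme{\cdot}^\ell_{\dot\B^{s+1/p-1/p^*}_{p,1}}$, which is what the argument actually needs.
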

\begin{proof}  As a first, let us observe that $\norme{v}_{L^2_t(\dot{\mathbb{B}}^{\frac{1}{p}}_{p,1})}$
is dominated by the other terms of $X_p(t)$ (let us denote them by $\wt X_p(t)$).  This is clearly the case of the high frequency part 
since, by Bernstein inequality, H\"older inequality and the embedding  $\dot\B^{s}_{2,1}\hookrightarrow\dot\B^{s+\frac1p-\frac12}_{p,1}$
with $s=3/2,$  
$$\norme{v}_{L^2_t(\dot{\mathbb{B}}^{\frac{1}{p}}_{p,1})}^h\lesssim  \norme{v}_{L^2_t(\dot{\mathbb{B}}^{1+\frac{1}{p}}_{p,1})}^h
\lesssim  \norme{v}_{L^2_t(\dot{\mathbb{B}}^{\frac32}_{2,1})}^h\lesssim
\sqrt{\norme{v}_{L^1_t(\dot{\mathbb{B}}^{\frac{3}{2}}_{2,1})}^h\norme{v}_{L^\infty_t(\dot{\mathbb{B}}^{\frac{3}{2}}_{2,1})}^h}.$$
For the low frequency part, we write that
$$\norme{v}_{L^2_t(\dot{\mathbb{B}}^{\frac{1}{p}}_{p,1})}^\ell\leq \|\d_xu\|_{L^2_t(\dot{\mathbb{B}}^{\frac{1}{p}}_{p,1})}^\ell
+\|z\|_{L^2_t(\dot{\mathbb{B}}^{\frac{1}{p}}_{p,1})}^\ell\with z\triangleq v+\d_xu.$$
 By H\"older inequality and interpolation, we have
$$\|\d_xu\|_{L^2_t(\dot\B^{\frac1p}_{p,1})}^\ell\lesssim 
\Bigl(\|u\|^\ell_{L^\infty_t(\dot\B^{\frac1p}_{p,1})}\|u \|^\ell_{L^1_t(\dot\B^{\frac1p+2}_{p,1})}\Bigr)^{\frac12}
\andf \|z\|_{L^2_t(\dot\B^{\frac1p}_{p,1})}^\ell\lesssim 
\Bigl(\|z\|^\ell_{L^\infty_t(\dot\B^{\frac1p}_{p,1})}\|z \|^\ell_{L^1_t(\dot\B^{\frac1p}_{p,1})}\Bigr)^{\frac12}\cdotp
$$
As  the low frequencies of $z$ in  $L_t^\infty(\dot\B^{\frac1p}_{p,1})$ may be bounded
by $\wt X_p(t),$ we proved that 
\begin{equation}\label{eq:vL2}
\norme{v}_{L^2_t(\dot{\mathbb{B}}^{\frac{1}{p}}_{p,1})}\lesssim\wt X_p(t)\quad\hbox{for all }\ t\in\R^+.
\end{equation}
Let us also notice that by Sobolev embedding and Bernstein inequality, 
$$\norme{v+\d_xu}_{L^1_t(\dot{\mathbb{B}}^{\frac{1}{p}}_{p,1})}^h\lesssim
\norme{v}_{L^1_t(\dot{\mathbb{B}}^{\frac{3}{2}}_{2,1})}^h+\norme{\d_xu}_{L^1_t(\dot{\mathbb{B}}^{\frac{1}{2}}_{2,1})}^h.$$
Therefore, adding up the inequalities from Propositions \ref{APLP} and \ref{PropHfLp} 
 for $w=v$ and observing that $2\leq p\leq p^*,$ we get 
$$
X_p(t)\lesssim X_{p,0} +\int_0^t\bigl(\|(u,v)\|_{\dot\B^{\frac1p}_{p,1}}^\ell+\|(u,v)\|_{\dot\B^{\frac32}_{2,1}}^h\bigr)\bigl(\|v\|^\ell_{\dot\B^{\frac1p+1}_{p,1}}+\|v\|^h_{\dot\B^{\frac32}_{2,1}}\bigr)+\int_0^t\|v\|^\ell_{\dot\B^{\frac1p}_{p,1}}\|(\d_xu,\d_xv)\|_{\dot\B^{\frac1p}_{p,1}}.$$
Since 
$$
\|v\|^\ell_{\dot\B^{\frac1p+1}_{p,1}}\leq \|v+\d_xu\|^\ell_{\dot\B^{\frac1p+1}_{p,1}}
+C\|u\|^\ell_{\dot\B^{\frac1p+2}_{p,1}},
$$
we conclude that the inequality of Proposition \ref{p:bound} is satisfied. 
\end{proof}


\subsection{Proof of the existence part of Theorem \ref{ThmExistLp}}

The proof relies on the following classical result about the local existence of strong solutions  for hyperbolic symmetric systems of type
$$\left\{\begin{array}{l} \d_t U+\sum_{k=1}^d A_k(U)\d_kU+A_0(U)=0,\\[1ex]
U|_{t=0}=U_0,\end{array}\right.\leqno(QS)$$
where $A_k,$ $k=0,\dots,d$  are smooth functions from $\R^n$ to the space of $n\times n$  matrices, 
that are symmetric if $k\not=0.$ 
\begin{Thm}{\cite[Chap.~4]{HJR}}  \label{ThmExistLocalLp}
Let $U_0$  be in  the nonhomogeneous Besov space $\mathbb{B}^{\frac{d}{2}+1}_{2,1}(\R^d;\R^n)$. Then, $(QS)$ 
admits a unique maximal solution $U$  in $\mathcal{C}([0,T^*[;{\mathbb{B}^{\frac{d}{2}+1}_{2,1}})\cap\mathcal{C}^1([0,T^*[;{\mathbb{B}^{\frac{d}{2}}_{2,1}}),$ and there exists a positive constant $c$ such that 
$$T^*\geq\frac{c}{\norme{U_0}_{\mathbb{B}^{\frac{d}{2}+1}_{2,1}}}\cdotp$$
Furthermore, 
$$T^*<\infty \Longrightarrow \int_0^{T^*}\norme{\nabla U}_{L^\infty}=\infty.$$
\end{Thm}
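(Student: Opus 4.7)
The statement is the classical local existence theorem for quasilinear symmetric hyperbolic systems in the critical Besov space $\mathbb{B}^{d/2+1}_{2,1}$, so the argument is standard and I would follow the scheme of \cite{HJR}. The two scale-critical ingredients are the embedding $\mathbb{B}^{d/2}_{2,1}\hookrightarrow L^\infty$ (so that $\mathbb{B}^{d/2+1}_{2,1}$ controls $\nabla U$ in $L^\infty$) and the stability of $\mathbb{B}^{d/2+1}_{2,1}$ under composition with smooth functions.

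The plan is the following. First, I would construct approximate solutions $(U^n)_{n\in\N}$ by the Friedrichs method: replace $(QS)$ by the truncated ODE system
\be
\d_t U^n+\dot S_n\Bigl(\sum_{k=1}^d A_k(\dot S_n U^n)\d_k \dot S_n U^n\Bigr)+\dot S_n A_0(\dot S_n U^n)=0,\qquad U^n|_{t=0}=\dot S_n U_0,
\ee
which lives in the Banach space $L^2_n\triangleq \{f\in L^2\,:\,\mathrm{supp}\,\widehat f\subset B(0,2^n)\}$ and is solved globally in time by the Cauchy--Lipschitz theorem. Second, I would derive uniform a priori estimates on $(U^n)$ in $L^\infty_T(\mathbb{B}^{d/2+1}_{2,1})$. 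Applying $\ddj$ to the equation, multiplying by $\ddj U^n$ and using the symmetry of the $A_k$'s, integration by parts produces a $\frac12\int (\div A_k(U^n))|\ddj U^n|^2$ term; the remaining contributions take the form of commutators $[\ddj,A_k(U^n)]\d_k U^n$ which, by the standard commutator estimate (of the kind already used in the body of the paper, see \eqref{eq:com1}), are bounded in $\ell^1(2^{j(d/2+1)}L^2)$ by $C\|\nabla U^n\|_{\dot{\mathbb{B}}^{d/2}_{2,1}}\|U^n\|_{\mathbb{B}^{d/2+1}_{2,1}}$. Combined with the tame composition estimate for $A_k(U^n)$, summation over $j\geq -1$ and Gronwall's inequality yield
\be
\|U^n(t)\|_{\mathbb{B}^{d/2+1}_{2,1}}\leq \|U_0\|_{\mathbb{B}^{d/2+1}_{2,1}}\exp\Bigl(C\int_0^t\|\nabla U^n\|_{L^\infty}\Bigr)\cdotp
\ee
Feeding this into itself via $\|\nabla U^n\|_{L^\infty}\lesssim \|U^n\|_{\mathbb{B}^{d/2+1}_{2,1}}$ and a bootstrap, one gets a uniform lower bound $T^*\gtrsim 1/\|U_0\|_{\mathbb{B}^{d/2+1}_{2,1}}$ on the existence time.

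Third, I would pass to the limit. Differences $U^n-U^m$ satisfy a linear hyperbolic system with source terms controlled by the uniform bound, and a standard energy estimate in $L^2$ (or $\mathbb{B}^{d/2}_{2,1}$, to keep the scaling) shows that $(U^n)$ is Cauchy in $\mathcal{C}([0,T];\mathbb{B}^{d/2}_{2,1})$. Interpolation with the uniform $\mathbb{B}^{d/2+1}_{2,1}$ bound gives convergence in every intermediate space, hence a solution $U\in L^\infty_T(\mathbb{B}^{d/2+1}_{2,1})$. Weak-$*$ compactness plus a standard Fatou-type argument upgrades membership to $L^\infty$, and strong time-continuity in the critical space $\mathbb{B}^{d/2+1}_{2,1}$ is recovered by cutting in frequency: high-frequency blocks go to zero in $\ell^1(2^{j(d/2+1)})$ thanks to the uniform bound, and low-frequency blocks are continuous in time by the equation. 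Uniqueness is obtained in the same way as the Cauchy property, by an $L^2$ (or $\mathbb{B}^{d/2}_{2,1}$) estimate on the difference of two solutions and Gronwall. Finally, the blow-up criterion follows from the a priori estimate displayed above: if $\int_0^{T^*}\|\nabla U\|_{L^\infty}<\infty$, then $\|U\|_{\mathbb{B}^{d/2+1}_{2,1}}$ stays bounded up to $T^*$, and one can restart the local existence scheme past $T^*$, contradicting maximality.

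The main obstacle is the a priori estimate at the scale-critical index $d/2+1$: one cannot afford any loss of derivative in the commutator, which is precisely why the space $\mathbb{B}^{d/2+1}_{2,1}$ (with summation index $r=1$) is chosen, so that $\nabla U\in L^1_T(L^\infty)$ closes the Gronwall argument. The second delicate point is the strong continuity in time with values in the critical space, which requires the frequency cut-off argument outlined above rather than a direct application of the equation, since the nonlinear terms only provide $\d_t U\in L^\infty_T(\mathbb{B}^{d/2}_{2,1})$.
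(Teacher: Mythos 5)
The paper does not prove Theorem \ref{ThmExistLocalLp}: it is quoted verbatim as a known result with the citation \cite[Chap.~4]{HJR}, and is used as a black box in the construction of approximate solutions. Your proposal is a faithful sketch of precisely the argument given in that reference (Friedrichs regularization, commutator and tame composition estimates in $\mathbb{B}^{d/2+1}_{2,1}$, Gronwall with $\|\nabla U\|_{L^\infty}$ in the exponent, $\mathbb{B}^{d/2}_{2,1}$-contraction for uniqueness, and the frequency-splitting argument for strong time continuity), so it agrees with what the paper invokes. One small imprecision: the zeroth-order term $A_0(U)$ also contributes to the Gronwall factor, so the exponent is really $\int_0^t\bigl(\|\nabla U^n\|_{L^\infty}+C(\|U^n\|_{L^\infty})\bigr)$ rather than $\int_0^t\|\nabla U^n\|_{L^\infty}$ alone; this does not affect the lower bound on $T^*$ nor the blow-up criterion, since the extra constant is integrable on any finite interval, but it should be kept in the display.
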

The proof of the existence part   of Theorem \ref{ThmExistLp} is structured as follows. First, 
we multiply the  low frequencies of the data by a cut-off function  in order to have 
data in $\B^{\frac32}_{2,1}.$ One can then use the above theorem to construct
a sequence of approximate solutions, that are shown to be global. We prove uniform estimates in the space $E_p$ for those solutions, 
pass to the limit up to subsequence by means of compactness arguments, and 
finally check that the limit is a solution of $(TM)$ with the required properties.

\subsubsection*{First step. Construction of approximate solutions}

Let $(u_0,v_0)$ be such that $u_0^\ell,v_0^\ell\in\dot{\mathbb{B}}^{\frac{1}{p}}_{p,1}$ and $u_0^h,v_0^h\in\dot{\mathbb{B}}^{\frac{3}{2}}_{2,1}$. 
Since $(u_0,v_0)$ need not be in $\mathbb{B}^{\frac{3}{2}}_{2,1},$  we set for all $n\geq1,$
$$u_{0}^n\triangleq   \chi_n\, \dot S_{J_0-5}u_0+({\rm Id}-\dot S_{J_0-5})u_0
 \andf v_{0}^n\triangleq \chi_n\, \dot S_{J_0-5}v_0+({\rm Id}-\dot S_{J_0-5})v_0
 $$
with $\chi_n\triangleq \chi(n^{-1}\cdot),$
where $\chi$ stands (for instance) for the bump function of Section \ref{s:main}. 
\smallbreak
 It is obvious that the sequences $(u_{0}^n)_{n\in\N}$ and $(v_{0}^n)_{n\in\N}$ tend  to $u_0$ and $v_0$  in the sense of distributions, when $n$ tends to infinity.
Moreover, as $u_0^\ell$ and $v_0^\ell$ are in $\dot\B^{\frac1p}_{p,1},$  the  low frequencies of the data
are in $L^\infty,$ and the spatial truncation thus guarantees that  $u_{0}^n,v_{0}^n\in {\mathbb{B}}^{\frac{3}{2}}_{2,1}.$ Hence,  Theorem \ref{ThmExistLocalLp} provides us with a unique  maximal solution $(u^n,v^n)\in\mathcal{C}([0,T_n[;\mathbb{B}^{\frac{3}{2}}_{2,1})\cap\mathcal{C}^1([0,T_n[;\mathbb{B}^{\frac{1}{2}}_{2,1}).$ 
\smallbreak
We claim that  we have  for $z_0=u_0, v_0,$ 
\begin{equation}\label{eq:uon}
\norme{z_{0}^n}^\ell_{\dot{\mathbb{B}}^{\frac{1}{p}}_{p,1}}+ \norme{z_{0}^n}^h_{\dot{\mathbb{B}}^{\frac{3}{2}}_{2,1}}\lesssim\norme{z_0}^\ell_{\dot{\mathbb{B}}^{\frac{1}{p}}_{p,1}}+\norme{z_0}^h_{\dot{\mathbb{B}}^{\frac{3}{2}}_{2,1}}.
\end{equation}
Indeed,   since $\norme{\chi_n}^\ell_{\dot{\mathbb{B}}^{\frac{1}{p}}_{p,1}}\simeq\norme{\chi}^\ell_{\dot{\mathbb{B}}^{\frac{1}{p}}_{p,1}}<\infty,$
 owing to the invariance of the norm in $\dot\B^{\frac1p}_{p,1}$ by spatial dilation (see e.g. \cite[Rem. 2.19]{HJR}), 
 we may write 
$$\begin{aligned}\norme{z_{0}^n}^\ell_{\dot{\mathbb{B}}^{\frac{1}{p}}_{p,1}} &\leq
 \norme{\chi_n\,\dot S_{J_0-5}z_0}^\ell_{\dot{\mathbb{B}}^{\frac{1}{p}}_{p,1}}+\norme{({\rm Id}-\dot S_{J_0-5})z_0}^\ell_{\dot{\mathbb{B}}^{\frac{1}{p}}_{p,1}} \\ &\lesssim 
 \norme{z_0}^\ell_{\dot{\mathbb{B}}^{\frac{1}{p}}_{p,1}}\norme{\chi_n}_{\dot{\mathbb{B}}^{\frac{1}{p}}_{p,1}}+\norme{z_0}_{\dot{\mathbb{B}}^{\frac{1}{p}}_{p,1}}  \\ &\lesssim 
 \norme{z_0}^\ell_{\dot{\mathbb{B}}^{\frac{1}{p}}_{p,1}}+ \norme{z_0}^h_{\dot{\mathbb{B}}^{\frac{3}{2}}_{2,1}}.
\end{aligned}$$
Next, we see that 
$$\norme{z_{0}^n}^h_{\dot{\mathbb{B}}^{\frac{3}{2}}_{2,1}}\leq 
\|\chi_n\, \dot S_{J_0-5}z_0\|^h_{\dot{\mathbb{B}}^{\frac{3}{2}}_{2,1}}
+\|({\rm Id}-\dot S_{J_0-5})z_0^h\|_{\dot{\mathbb{B}}^{\frac{3}{2}}_{2,1}}.$$
It is obvious that the last term may be bounded  by $\|z_0\|^h_{\dot{\mathbb{B}}^{\frac{3}{2}}_{2,1}}.$
For the other term, the important observation is that for $j\geq J_0,$ we have
$$\ddj\bigl(\chi_n\, \dot S_{J_0-5}z_0\bigr) = \sum_{j'\geq j-3} \ddj\bigl(\dot S_{j'+2}\dot S_{J_0-5}z_0\,\dot\Delta_{j'}\chi_n\bigr)\cdotp$$ 
Hence, owing to the scaling properties of the space $\dot\B^{\frac32}_{2,1},$  
$$
\|\chi_n\, \dot S_{J_0-5}z_0\|^h_{\dot{\mathbb{B}}^{\frac{3}{2}}_{2,1}}\lesssim \|\dot S_{J_0-5}z_0\|_{L^\infty}
\|\chi_n\|_{\dot\B^{\frac32}_{2,1}}\lesssim n^{-1} \|z_0\|_{\dot\B^{\frac1p}_{p,1}},$$
which eventually yields  \eqref{eq:uon}.

\subsubsection*{Second step. Uniform estimates} 

Since, for all $T>0,$ the space   $\mathcal{C}([0,T];\mathbb{B}^{\frac{3}{2}}_{2,1})\cap\mathcal{C}^1([0,T];\mathbb{B}^{\frac{1}{2}}_{2,1})$ is included in our `solution space' $E_p(T)$ (that is, $E_p$ restricted to $[0,T]$), 
one can take advantage of Proposition \ref{p:bound}    for bounding our sequence. 
From it and \eqref{eq:uon}, we get, denoting $X^n_p$ the function $X_p$ pertaining to $(u^n,v^n),$   
 \begin{equation} \label{Xp}   
X^n_p\leq C\bigl(X_{p,0}+(X^n_p)^2\bigr)\cdotp
\end{equation}
It  is clear that if \begin{equation}
    2CX^n_p(t)\leq 1, \label{Xp1}
\end{equation} then  Inequality \eqref{Xp} implies that 
   $$ X^n_p(t)\leq 2CX_{p,0}. $$
Then, thanks to a classical bootstrap argument, we can conclude that if $X_{p,0}$ is small enough then \eqref{Xp1} is true as long as the solution exists. Hence, 
there exists a constant $C$ such that 
\begin{equation}\label{Xp2}
 X^n_p(t)\leq  CX_{p,0}\quad\hbox{for all } \ n\geq1\andf t\in[0,T_n[.
\end{equation}
In order to show that the above inequality  implies that the solution is global (namely that $T_n=\infty$), one can argue by contradiction, assuming that $T_n<\infty,$ and use the blow-up criterion 
of Theorem \ref{ThmExistLocalLp}. However, we first have to justify that the nonhomogeneous Besov norm $\B^{\frac32}_{2,1}$ of the solution is under control \emph{up to time $T_n.$}
Now, applying the standard energy method to $(TM)$ yields  for all $t<T_n,$ 
 $$\norme{(u^n,v^n)(t)}_{L^2}^2\leq \norme{(u^n_0,v^n_0)}_{L^2}^2 + \int_0^t\norme{\partial_xv^n}_{L^\infty}\norme{(u^n,v^n)}_{L^2}^2.$$ 
 Since \eqref{Xp2} and the embedding of $\dot\B^{\frac1p}_{p,1}$ and $\dot\B^{\frac12}_{2,1}$ in $L^\infty$ ensure that $\d_xv^n$ is in $L^1_{T_n}(L^\infty),$
 using Gronwall lemma  gives that $(u^n,v^n)$ is in  $L^\infty_{T_n}(L^2),$ and thus in 
  $L^\infty_{T_n}({\mathbb{B}}^{\frac{3}{2}}_{2,1})$ owing, again,  to \eqref{Xp2}. 

It is now easy to conclude : for all $t_{0,n}\in[0,T_n[,$  Theorem \ref{ThmExistLocalLp} provides us with  a solution of $(TM)$ with the initial data $(u(t_{0,n}),v(t_{0,n})),$ 
 on $[t_{0,n},T+t_{0,n}]$ for some $T$ that may be bounded from below independently of   $t_{0,n}$. 
Consequently, choosing  $t_{0,n}$ such that $t_{0,n}>T_n-T$, we see that  the solution $(u^n,v^n)$ can be extended beyond $T_n$, which 
contradicts the maximality of $T_n.$
Hence $T_n=+\infty$ and the solution corresponding to the initial data $(u^n_0,v^n_0)$ is global in time and satisfies $\eqref{Xp2}$ for all time.

\subsubsection*{Third step. Convergence} 

We have to show that $(u^n,v^n)_{n\in\N}$ tends, up to subsequence, to some $(u,v)\in E_p$ in the sense of distribution, that satisfies $(TM).$ 

The proof that we here propose rests on  Ascoli Theorem and suitable compact embeddings. 
Let us explain how it goes for $(u^n)_{n\in\N},$ the convergence of $(v^n)_{n\in\N}$
being similar. {}From \eqref{Xp2} and elementary embedding, we know that~: 
\begin{itemize}
\item  $(\d_xu^n)_{n\in\N}$ and  $(\d_xv^n)_{n\in\N}$  are bounded in $L^2(\dot\B^{\frac1p}_{p,1})$,
\item $(v^n)_{n\in\N}$ is bounded in $L^\infty(\dot\B^{\frac1p}_{p,1})$.
\end{itemize}
Hence, both $(v^n\d_xu^n)_{n\in\N}$ and $(\d_xv^n)_{n\in\N}$  are bounded
in  $L^2(\dot\B^{\frac1p}_{p,1}),$ which implies that $(\partial_tu^n)_{n\in\N}$ is bounded in $L^2(\dot{\mathbb{B}}^{\frac{1}{p}}_{p,1}).$
This means that $(u^n)_{n\in\mathbb{N}}$ viewed as a sequence of functions valued in $\dot{\mathbb{B}}^{\frac{1}{p}}_{p,1}$  is locally equicontinuous on $\mathbb{R}^+$. 

Moreover $(u^{n,h})_{n\in\mathbb{N}}$ is bounded in $\mathcal{C}(\mathbb{R}^+,\dot{\mathbb{B}}^{\frac{3}{2}}_{2,1})$, $(u^{n,\ell})_{n\in\mathbb{N}}$ is bounded in $\mathcal{C}(\mathbb{R}^+,\dot{\mathbb{B}}^{\frac{1}{p}}_{p,1})$ and we know, thanks to a result  of \cite[Chap. 2]{HJR}, that the embedding 
from  $F=\{u\in\mathcal{S}',\: u^\ell\in\dot{\mathbb{B}}^{\frac{1}{p}}_{p,1}\text{ and }  u^h\in\dot{\mathbb{B}}^{\frac{3}{2}}_{2,1} \}$  to  $\dot{\mathbb{B}}^{\frac{1}{p}}_{p,1}$ is locally compact. Therefore, one can combine  Ascoli Theorem and the Cantor diagonal extraction process
 to deduce that there exists a distribution $u$   
 such that, up to subsequence  $(\phi u^n)_{n\in\mathbb{N}}$ converges to $\phi u$ 
 in  $\mathcal{C}(\mathbb{R}^+;\dot\B^{\frac1p}_{p,1})$ for all function $\phi$ compactly supported in $\mathbb{R}^+\times\mathbb{R}^n$. Then, using the Fatou property (cf. \cite{HJR}, chapter 2) we obtain that $u^\ell\in L^\infty(\dot{\mathbb{B}}^{\frac{1}{p}}_{p,1})\cap L^1(\dot{\mathbb{B}}^{\frac{1}{p}+2}_{p,1})$ and $u^h\in L^\infty(\dot{\mathbb{B}}^{\frac{3}{2}}_{2,1})\cap L^1(\dot{\mathbb{B}}^{\frac{3}{2}}_{2,1}),$ with norms bounded by the right-hand side of \eqref{Xp2}.
 One can argue similarly for establishing the weak convergence of $(v^n)_{n\in\N}$
 to some distribution $v$  fulfilling the desired properties of regularity up to
 time continuity. 
 
 Finally, passing to the limit in $(TM)$ is not an issue, since we have strong convergence (after
 localization) in norms with positive indices of regularity.

\subsubsection*{Last step. Proving that \texorpdfstring{$(u,v)\in E_p$}{TEXT}}

The only property that misses is the time continuity. 
It may be obtained by looking at $u$ and $v$ as solutions of transport equations. 
Indeed, by construction, we have
$$\d_tu+v\d_xu =-\d_xv\andf \d_tv+v\d_xv+v=-\d_xu.$$
The properties we proved so far for $u$ and $v$ ensure that 
$\d_xu$ and $\d_xv$ belong to $L^2(\dot\B^{\frac1p}_{p,1}).$ 
Hence, the standard properties for the transport equation (see e.g. \cite[Chap. 3]{HJR})
give us that $(u,v)\in\cC(\R^+;\dot\B^{\frac1p}_{p,1}).$ 

To show that $(u,v)^h\in \cC(\R^+;\dot B^{\frac32}_{2,1}),$ one can mimic
the proof for general symmetric hyperbolic systems, summing up only on 
high frequencies, as presented at \cite[p.196]{HJR} for instance.

In the end, we thus have proved that $(u,v)$ is a global solution of $(TM),$ 
that verifies the desired properties of regularity and  $X_p(t)\leq  CX_{p,0}$ for all $t\in\R^+.$


\subsection{Proof of  uniqueness}

Consider two solutions $(u_1,v_1)$ and $(u_2,v_2)$ of $(TM)$ (not necessarily small)
 in  the space $E_p,$ that correspond  to the same initial data $(u_0,v_0).$ The proof of uniqueness will 
 follow from stability estimates involving  suitable norms. The difficulty is that our functional framework 
 is not the standard one for the low frequency of the solution, so that one cannot follow the classical 
 proof  for  hyperbolic symmetric systems. 
Here  we shall estimate $(\du,\dv):=(u_2-u_1,v_2-v_1)$ in the space 
\begin{equation}\label{eq:FpT}
F_p(T)\triangleq \Bigl\{z\in\cC([0,T];\dot\B^{\frac 2p-\frac 12}_{p,1})\,:\, 
z^h\in \cC([0,T];\dot\B^{\frac 12}_{2,1})\Bigr\}
\cdotp\end{equation}
The reason for this choice is the usual loss of one derivative when proving stability estimates for quasilinear hyperbolic systems
(hence the exponent $1/2$ for high frequencies). The exponent for low frequencies looks to be the best one 
for controlling  the nonlinearities. Before starting the proof, we introduce the notation
$$\dU(t)\triangleq \|(\du,\dv)(t)\|^\ell_{\dot\B^{\frac2p-\frac12}_{p,1}}+\|(\du,\dv)(t)\|^h_{\dot\B^{\frac12}_{2,1}}.$$

\subsubsection*{Step 1. Proving that  \texorpdfstring{$(\du,\dv)\in F_p(T)$}{TEXT}} 

Remember that $\d_tu_i=-\d_xv_i -v_i\d_xu_i$ for $i=1,2.$
By interpolation in Besov spaces and H\"older inequality with respect to the time variable, 
since $\d_xu_i^\ell$ and $\d_xv_i^\ell$ are in $L^\infty(\R^+;\dot\B^{\frac1p-1}_{p,1})\cap L^1(\R^+;\dot\B^{\frac1p+1}_{p,1}),$  we get
\begin{equation}\label{eq:Lr}\d_xu_i^\ell,\d_xv_i^\ell\in L^r(\R^+;\dot\B^{\frac2p-\frac12}_{p,1})
\with \frac1r\triangleq\frac14+\frac1{2p}\cdotp\end{equation}
It is clear that the same property holds for the high frequencies of $\d_xu_i$ and 
$\d_xv_i,$  since they belong  to $ L^1(\dot\B^{\frac1p}_{p,1})\cap L^\infty(\dot\B^{\frac1p}_{p,1}).$
We also know that $v_i$ belongs to $L^\infty(\R^+;\dot\B^{\frac1p}_{p,1}).$ Therefore, from 
the product laws in Besov spaces that have been recalled in Proposition \ref{LP}, we
gather that $\d_xv_i$ and $v_i\d_xu_i$ are in  $L^r(\R^+;\dot\B^{\frac2p-\frac12}_{p,1}).$
Hence, $\d_tu_i$ is in  $L^r(\R^+;\dot\B^{\frac2p-\frac12}_{p,1}),$  and thus
\begin{equation}\label{eq:uniq1} (u_i-u_0)\in \cC^{\frac1{r'}}_{loc}(\R^+;\dot\B^{\frac2p-\frac12}_{p,1}).\end{equation}
Proving the result for $v_i$ is almost the same, except that we have to handle the damping term. 
To overcome it, we notice that 
$$\d_t(e^tv_i)=-e^tv_i\,\d_xv_i- e^t\d_xu_i.$$
Arguing as above, we get that $\d_t(e^tv_i)\in L^r_{loc}(\R^+;\dot\B^{\frac2p-\frac12}_{p,1}),$ whence
\begin{equation}\label{eq:uniq2} (e^tv_i-v_0)\in \cC^{\frac1{r'}}_{loc}(\R^+;\dot\B^{\frac2p-\frac12}_{p,1}).\end{equation}
{}From \eqref{eq:uniq1} and \eqref{eq:uniq2}, we conclude that $(\du,\dv)\in F_p(T)$ for all finite $T.$

\subsubsection*{Step 2. Estimates for the low frequencies} 

The system satisfied by $(\du,\dv)$ reads: 
\begin{equation}\label{eq:uniq3}\left\{\begin{array}{l} \d_t\du+\d_x\dv=-\dv\,\d_xu_1-v_2\,\d_x\du,\\[1ex]
\d_t\dv +\dv +\d_x\du =-\dv\,\d_xv_1-v_2\,\d_x\dv.\end{array}\right.\end{equation}
Then, we follow the computations leading to Proposition \ref{APLP} with $w=0,$
looking at  $-\dv\,\d_xu_1-v_2\,\d_x\du$ and $-\dv\,\d_xv_1-v_2\,\d_x\dv$ as  source terms, 
and working at the level of regularity $2/p-1/2$ instead of $1/p$
(since the left-hand side of \eqref{eq:uniq3} is linear with constant coefficients, 
this shift of regularity does not change the proof). 
Omitting the time integral in the left-hand side of the Inequality given by Proposition \ref{APLP}, 
we find that
$$
\|(\du,\dv)(t)\|^\ell_{\dot\B^{\frac2p-\frac12}_{p,1}}\lesssim 
\int_0^t\bigl( \|\dv\,\d_xu_1\|^\ell_{\dot\B^{\frac2p-\frac12}_{p,1}}
+\|v_2\,\d_x\du\|^\ell_{\dot\B^{\frac2p-\frac12}_{p,1}}+\|\dv\,\d_xv_1\|^\ell_{\dot\B^{\frac2p-\frac12}_{p,1}}
+\|v_2\,\d_x\dv\|^\ell_{\dot\B^{\frac2p-\frac12}_{p,1}}\bigr)\cdotp$$
In order to bound the right-hand side, we may use that  Proposition \ref{Composition} yields
\begin{equation}\label{eq:loi1}
\|a\,b\|_{\dot\B^{\frac2p-\frac12}_{p,1}}\lesssim 
\|a\|_{\dot\B^{\frac1p}_{p,1}}\,\|b\|_{\dot\B^{\frac2p-\frac12}_{p,1}}\end{equation}
as well as the following inequality  that is a consequence of \eqref{eq:prod3} in 
the appendix (take $s=\frac2p-\frac12$ therein):
\begin{equation}\label{eq:loi2}\|a\,b\|^\ell_{\dot\B^{\frac2p-\frac12}_{p,1}}\lesssim  \|a\|_{\dot\B^{\frac1p}_{p,1}\cap
\dot\B^{\frac1p+1}_{p,1}}\,\|b\|_{\dot\B^{\frac2p-\frac32}_{p,1}}.\end{equation}
In the end, choosing $a=v_2$ and $b=\d_x\du$ or $\d_x\dv,$  we get 
 \begin{equation}\label{eq:uniq5}
 \|(\du,\dv)(t)\|^\ell_{\dot\B^{\frac2p-\frac12}_{p,1}}\lesssim \int_0^t
 \bigl(\|(\d_xu_1,\d_xv_1)\|_{\dot\B^{\frac2p-\frac12}_{p,1}}
 \|\dv\|_{\dot\B^{\frac1p}_{p,1}}
 +\|v_2\|_{\dot\B^{\frac1p}_{p,1}\cap\dot\B^{\frac1p+1}_{p,1}}\|(\du,\dv)\|_{\dot\B^{\frac2p-\frac12}_{p,1}}\bigr)\cdotp
 \end{equation}

 \subsubsection*{Step 3. Estimates for the high frequencies} 
 
 We look at the system satisfied by $(\du,\dv)$ under the form: 
$$\left\{\begin{array}{l} \d_t\du+v_2\,\d_x\du+\d_x\dv=-\dv\,\d_xu_1,\\[1ex]
\d_t\dv +\dv  +v_2\,\d_x\dv+\d_x\du =-\dv\,\d_xv_1.\end{array}\right.$$
This is System $(LTM)$ except for the source terms in the right-hand side. 
Clearly, following the computations leading to \eqref{eq:gmn}, but
using the index $1/2$ instead of $3/2$ gives
\begin{multline}\label{eq:uniq6}
\|(\du,\dv)(t)\|^h_{\dot\B^{\frac12}_{2,1}}\lesssim
\int_0^t\|\d_xv_2\|_{L^\infty}\|(\du,\dv)\|^h_{\dot\B^{\frac12}_{2,1}}
\\+\int_0^t\sum_{j\geq J_0} 2^{\frac j2}\bigl(\|[v_2,\ddj]\d_x\du\|_{L^2}+\|[v_2,\ddj]\d_x\dv\|_{L^2}\bigr)
+\int_0^t\bigl( \|\dv\,\d_xu_1\|^h_{\dot\B^{\frac12}_{2,1}}+\|\dv\,\d_xv_1\|^h_{\dot\B^{\frac12}_{2,1}}\bigr)\cdotp\end{multline} Let $p^*\triangleq 2p/(p-2).$ 
Lemma \ref{CP} tells us that, for $z=\du,\dv,$ 
 $$\displaylines{
\sum_{j\geq J_0}2^{\frac j2}\norme{[v_2,\ddj]\d_xz}_{L^2}\lesssim \norme{\partial_xv_2}_{L^\infty}\norme{z}^h_{\dot{\mathbb{B}}^{\frac12}_{2,1}}+ \norme{\partial_xz}_{\dot{\mathbb{B}}^{\frac{1}{p}-1}_{p,1}}\norme{v_2}^\ell_{\dot{\mathbb{B}}^{1+\frac{1}{p^*}}_{p^*,1}}\hfill\cr\hfill
+\norme{\partial_xz}_{\dot{\mathbb{B}}^{-1}_{\infty,\infty}}\norme{v_2}^h_{\dot{\mathbb{B}}^{\frac12}_{2,1}}
+ \norme{\partial_xz}^\ell_{\dot{\mathbb{B}}^{-\frac{1}{p}}_{p,1}}\norme{\d_xv_2}^\ell_{\dot{\mathbb{B}}^{-\frac{1}{p^*}}_{p^*,1}}\cdotp}$$
Hence, using obvious embedding and the fact that
$$ \norme{\partial_xz}^\ell_{\dot{\mathbb{B}}^{-\frac{1}{p}}_{p,1}}\lesssim
 \|z\|^\ell_{\dot\B^{\frac2p-\frac12}_{p,1}},\quad
  \|v_2\|^\ell_{\dot{\mathbb{B}}^{1+\frac{1}{p^*}}_{p^*,1}}\lesssim\|v_2\|^\ell_{\dot\B^{\frac1p}_{p,1}}\andf
 \|\d_xv_2\|^\ell_{\dot{\mathbb{B}}^{-\frac{1}{p^*}}_{p^*,1}}\lesssim\|v_2\|^\ell_{\dot\B^{\frac1p}_{p,1}}$$
yields for $z=\du,\,\dv,$ 
\begin{equation}\label{eq:uniq7}
\sum_{j\geq J_0}2^{\frac j2}\norme{[v_2,\ddj]\d_xz}_{L^2}\lesssim
\bigl(\|v_2\|^\ell_{\dot\B^{\frac1p}_{p,1}}+\|v_2\|^h_{\dot\B^{\frac32}_{2,1}}\bigr)
\bigl(\|z\|^\ell_{\dot\B^{\frac2p-\frac12}_{p,1}}+\|z\|^h_{\dot\B^{\frac12}_{2,1}}\bigr)\cdotp\end{equation}
The last two terms of \eqref{eq:uniq6} may be bounded thanks to 
 Inequality \eqref{eq:prod4}: we get for $z=u_1,\, v_1,$ 
 \begin{equation}\label{eq:uniq8}
 \|\dv\,\d_xz\|_{\dot\B^{\frac12}_{2,1}}^h\lesssim \bigl(\|\dv\|^\ell_{\dot\B^{\frac2p-\frac12}_{p,1}}+\|\dv\|_{\dot\B^{\frac12}_{2,1}}^h\bigr)\bigl(\|\d_xz\|_{\dot\B^{\frac1p-1}_{p,1}}^\ell
 +\|\d_xz\|_{\dot\B^{\frac12}_{2,1}}^h\bigr)\cdotp
 \end{equation}
Plugging \eqref{eq:uniq7} and \eqref{eq:uniq8} in \eqref{eq:uniq6}, and using also the fact that
$$\|\d_xv_2\|_{L^\infty}\lesssim \|v_2\|_{\dot\B^{\frac1p}_{p,1}}^\ell + 
\|v_2\|_{\dot\B^{\frac32}_{2,1}}^h,$$ 
we end up with  
 \begin{equation}\label{eq:uniq9}
 \|(\du,\dv)(t)\|^h_{\dot\B^{\frac12}_{2,1}}\lesssim \int_0^t
 \bigl(\|(u_1,v_1,v_2)\|_{\dot\B^{\frac1p}_{p,1}}^\ell
 +\|(u_1,v_1,v_2)\|_{\dot\B^{\frac32}_{2,1}}^h\bigr)\dU.\end{equation}

  \subsubsection*{Step 4. Conclusion} 
 
 Putting \eqref{eq:uniq5} and \eqref{eq:uniq9} together yields
 $$\dU(t)\leq C\int_0^t\bigl(\|(u_1,v_1,v_2)\|_{\dot\B^{\frac1p}_{p,1}}^\ell
 +\|(u_1,v_1,v_2)\|_{\dot\B^{\frac32}_{2,1}}^h+\|(\d_xu_1,\d_xv_1)\|_{\dot\B^{\frac2p-\frac12}_{p,1}}
\bigr)\dU\,d\tau.$$
 Knowing that $(u_1,v_1)$ and $(u_2,v_2)$ are in $E_p(T)$ and remembering \eqref{eq:Lr}, we get, 
  $$ \int_0^T\bigl(\|(u_1,v_1,v_2)\|_{\dot\B^{\frac1p}_{p,1}}^\ell
 +\|(u_1,v_1,v_2)\|_{\dot\B^{\frac32}_{2,1}}^h\bigr)<\infty.$$
 Hence, applying Gronwall lemma allows to conclude that 
 $\dU\equiv0$ on $[0,T].$
 In other words,  $(u_1,v_1)$ and $(u_2,v_2)$ coincide on $[0,T]\times\R.$
 Since $T$ is arbitrary, uniqueness is proved. 
 \qed


\section{Compressible Euler system with damping}\label{s:Euler}

As  pointed out  in the introduction,  System $(TM)$ may be seen as  an approximation of  the damped isentropic compressible Euler system 
with pressure law $P(\rho)=\frac{\rho^2}{2}\cdotp$
Here we want to adapt the method of the previous sections 
to the true damped compressible Euler system~:
$$ \left\{ \begin{aligned} &\partial_t\rho+\d_x(V\rho)=0,\\ 
&\partial_t(\rho V)+\partial_x(\rho V^2)+\partial_x(P(\rho))+\lambda \rho V=0, \end{aligned} \right.\leqno(E)$$
  supplemented with initial data $(\rho_0,V_0)$ that is a perturbation of some
  constant state $(\bar\rho,0)$ with $\bar\rho>0.$ 
  The (given) pressure function $P$ is assumed  to be  smooth and such that~: 
  \begin{equation}\label{eq:pressure}\begin{cases}
  \hbox{ Case } 2<p\leq4:  P(\rho)=  a \rho^\gamma \ \hbox{for some positive } a\!\!\andf\!\! \gamma\ \hbox{in a neighborhood
  of}\ \bar\rho.\\
  \hbox{ Case } p=2:  \hbox{just }\ P'(\bar\rho)>0.
  \end{cases}\end{equation}
Note that, performing a suitable normalization reduces the study to the case
$\bar\rho=P'(\bar\rho)=1$ (hence $a=1/\gamma$ in the first case),  which will be assumed  from now on.
  \medbreak
  Consider  the new unknown $$\displaystyle n(\rho)\triangleq\int_1^\rho \frac{P'(s)}{s}\,ds.$$
Since our assumptions on the pressure guarantee that  $\rho\mapsto n(\rho)$ is a smooth diffeomorphism 
 from a neighborhood of $1$ to a neighborhood of $0,$
one can rewrite $(E)$ under the form
\begin{equation} \left\{ \begin{aligned} &\partial_tn+V\partial_x n+\partial_xV+G(n)\partial_xV=0,\\ 
&\partial_tV+V\partial_xV+\partial_xn+\lambda V=0, \end{aligned} \right.\label{CED4}
\end{equation} where $G(n)$ is defined by the relation\footnote{In what follows, we shall only use that $G$ is  a smooth function vanishing at $0.$}
$G(n(\rho))\triangleq P'(\rho)-1.$
\begin{Thm} \label{ThmEulerLp} Under hypothesis \eqref{eq:pressure}, there exist $k=k(p)\in\Z$ and $c_0=c_0(p)>0$ such that for $J_\lambda\triangleq\left\lfloor\rm log_2\lambda\right\rfloor+k,$ if we assume that $(n_0,V_0)$ fulfills the same conditions as in Theorem \ref{ThmExistLp}, then System \eqref{CED4}  admits a unique global solution $(n,V)$ verifying the same properties as the solution therein.
Furthermore, Corollary \ref{CorLambda} and Theorem \ref{ThmDecayTM} hold true (with the same decay rate).
\end{Thm}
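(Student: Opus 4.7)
\medbreak

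The plan is to treat System \eqref{CED4} as a perturbation of $(TM_\lambda)$. First, the rescaling \eqref{eq:rescaling} still applies and reduces matters to $\lambda=1$. Next, the crucial observation is that \eqref{CED4} has exactly the same principal linear structure as $(TM)$, the only difference being the extra nonlinear term $-G(n)\partial_xV$ in the continuity equation. Since $G$ is smooth with $G(0)=0$, this term vanishes to second order at $(n,V)=(0,0)$, and can be viewed as a quadratic perturbation as soon as the smallness assumption on the data keeps $n$ in the definition domain of $G$ (which follows from the embedding $\dot\B^{\frac{1}{p}}_{p,1}\hookrightarrow L^\infty$).

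For the case $p=2$, I would revisit the Lyapunov analysis of Section \ref{s:L2}. The new term $G(n)\partial_xV$ contributes, after localization by $\ddj$ and multiplication by the components of $(n_j,V_j,\partial_xn_j,\partial_xV_j)$, additional source terms in the energy identity for $\cL_j$ that are all bounded, after a commutator--paraproduct decomposition analogous to \eqref{eq:com}, by
$$C c_j 2^{-\frac{j}{2}}\bigl(\norme{G(n)}_{\dot\B^{\frac{1}{2}}_{2,1}\cap\dot\B^{\frac{3}{2}}_{2,1}}\bigr)\norme{(n,V,\partial_xn,\partial_xV)}_{\dot\B^{\frac{1}{2}}_{2,1}}\cL_j.$$
Using a standard composition estimate to dominate $\norme{G(n)}$ by a constant times $\norme{n}$ in the same space, these contributions fit into the right-hand side of \eqref{eq:toy4} and are absorbed by the smallness of $X(0)$. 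The Lyapunov functional $\wt\cL$ of Lemma \ref{LemmeDecay2} is then constructed verbatim, yielding both global existence and, combined with the Xu--Xin-type Lemma \ref{LemmaDecayBf2}, the decay estimates of Theorem \ref{ThmDecayTM}.

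For $2<p\leq 4$, the strategy of Section \ref{s:Lp} adapts with the same effective unknown $z\triangleq V+\partial_xn$. A direct computation gives
$$\partial_tz+z+V\partial_xz=\partial_{xxx}^3n-\partial_{xx}^2z-\partial_xV\,\partial_xn-\partial_x\bigl(G(n)\partial_xV\bigr),$$
so the only change with respect to \eqref{TMLP} is the last source term. In low frequencies, it is bounded in $L^1_t(\dot\B^{\frac{1}{p}}_{p,1})$ via Proposition \ref{LP} together with a composition estimate for $G(n)$, both being available because $\norme{n}^\ell_{\dot\B^{\frac{1}{p}}_{p,1}}$ is small. In high frequencies, one proceeds as for the term $w\partial_xu$ of $(LTM)$, applying Lemma \ref{CP} to $\ddj\bigl(G(n)\partial_xV\bigr)$, the composition estimate in $\dot\B^{\frac{3}{2}}_{2,1}$ being available thanks to the analyticity of $G$ near the origin (guaranteed by the power pressure law \eqref{eq:pressure}). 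Summing these bounds together with the analogues of Propositions \ref{APLP} and \ref{PropHfLp} closes the quadratic bootstrap exactly as in Proposition \ref{p:bound}.

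The main obstacle is precisely this new term $G(n)\partial_xV$: unlike the toy-model nonlinearities, it couples a composition with a derivative, which in hybrid $L^p$--$L^2$ Besov norms requires some care. For $p=2$ the algebra structure of $\dot\B^{\frac{1}{2}}_{2,1}\cap\dot\B^{\frac{3}{2}}_{2,1}$ makes composition essentially free, but for $p>2$ one genuinely needs a sharper $L^p$ composition result, which is the reason why the restriction $2<p\leq 4$ and the power pressure assumption \eqref{eq:pressure} are imposed. Once global existence is obtained, uniqueness in the space $F_p(T)$ of \eqref{eq:FpT} follows from the same interpolation and stability argument as in Section \ref{s:Lp}, the additional contribution $\partial_x\bigl(G(n_2)\partial_xV_2\bigr)-\partial_x\bigl(G(n_1)\partial_xV_1\bigr)$ being controlled by a mean-value bound on $G$ together with \eqref{eq:loi1}--\eqref{eq:loi2}. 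Finally, Corollary \ref{CorLambda} and the large-$\lambda$ decay statements transfer without any change.
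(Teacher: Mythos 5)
The main difficulty here — and the one your proposal does not address — is that $G(n)\partial_xV$ is a \emph{first-order coupling term} that makes the first-order part of \eqref{CED4} nonsymmetric: the coefficient matrix is $\begin{pmatrix}V & 1+G(n)\\ 1 & V\end{pmatrix}$, not $\begin{pmatrix}V&1\\1&V\end{pmatrix}$ as for $(TM)$. Your claim that, after localization, the new contributions to $\frac{d}{dt}\cL_j^2$ are all bounded by $Cc_j 2^{-j/2}\norme{G(n)}_{\dot\B^{1/2}_{2,1}\cap\dot\B^{3/2}_{2,1}}\norme{(n,V,\d_xn,\d_xV)}_{\dot\B^{1/2}_{2,1}}\cL_j$ is false in high frequencies. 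Writing $\ddj(G(n)\d_xV)=\dot S_{j-1}G(n)\,\d_xV_j + R'^1_j$, the remainder $R'^1_j$ obeys a commutator bound of the type you state, but the \emph{principal} piece $\dot S_{j-1}G(n)\,\d_xV_j$ contributes, in the part of the energy coming from $\|\d_xn_j\|_{L^2}^2$, a term $\int\dot S_{j-1}G(n)\,\d^2_{xx}V_j\,\d_xn_j$. Since the corresponding term from the $V$-equation is $\int\d^2_{xx}n_j\,\d_xV_j$ (with coefficient $1$, not $1+G(n)$), the cancellation that worked for $(TM)$ is broken: you are left with $\int\dot S_{j-1}G(n)\,\d^2_{xx}V_j\,\d_xn_j\sim 2^j\|G(n)\|_{L^\infty}\cL_j^2$, a one-derivative loss that the smallness of $\|n\|_{L^\infty}$ does not cure. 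Applying Lemma~\ref{CP} to $\ddj(G(n)\d_xV)$ ``as for $w\d_xu$'' does not help either, because $w\d_xu$ in $(LTM)$ is a \emph{self}-transport term whose frozen-coefficient part disappears after integration by parts, whereas $G(n)\d_xV$ couples the two unknowns and has no such antisymmetry.

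What is actually needed — and what the paper does — is to insert a $j$-dependent weight in the Lyapunov functional so as to symmetrize the localized system: one takes
$$\wt\cL_j^2\triangleq\int_\R(\d_xn_j)^2 + \bigl(1+\dot S_{j-1}G(n)\bigr)(\d_xV_j)^2 + \eta\int_\R V_j\,\d_xn_j,$$
so that the cross terms $\int(1+\dot S_{j-1}G(n))\d_xn_j\,\d^2_{xx}V_j$ and $\int(1+\dot S_{j-1}G(n))\d_xV_j\,\d^2_{xx}n_j$ compensate exactly, and the only price is harmless lower-order terms involving $\d_t\dot S_{j-1}G(n)$ and $\d_x\dot S_{j-1}G(n)$, which are then controlled via \eqref{eq:dtG}--\eqref{eq:dxG}. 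This weight is the crux of the high-frequency argument and it is missing from your proof. A secondary point: for $2<p\le4$ the paper does not invoke analyticity of $G$; rather the assumption $P(\rho)=a\rho^\gamma$ together with the normalization $P'(1)=1$ makes $G(n)=(\gamma-1)n$ \emph{linear}, so $G(n)\d_xV$ reduces to a bilinear term $(\gamma-1)n\,\d_xV$ estimable by product laws alone. For general smooth $G$ with $p>2$, composition estimates in the hybrid $L^p$--$L^2$ framework are not available, which is why the general-pressure case is restricted to $p=2$. Your low-frequency analysis via the effective unknown $z=V+\d_xn$, the $\dot\B^{-\sigma_1}_{2,\infty}$ propagation, and the transfer of the decay estimates are all consistent with what the paper does, and the rescaling reduction is correct; the essential missing ingredient is the weighted high-frequency functional.
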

\begin{proof} 
Performing the rescaling \eqref{eq:rescaling} reduces
the proof to the case $\lambda=1,$  and we are thus left with bounding for all $t\geq0,$
the functional 
$$\displaylines{
X_p(t)\triangleq  \norme{(n,V)}^\ell_{L^\infty_t(\dot{\mathbb{B}}^{\frac{1}{p}}_{p,1})}+\norme{(n,V)}^h_{L^\infty_t(\dot{\mathbb{B}}^{\frac{3}{2}}_{2,1})}\hfill\cr\hfill+
\norme{n}^\ell_{L^1_t(\dot{\mathbb{B}}^{\frac{1}{p}+2}_{p,1})}+\norme{(n,V)}^h_{L^1_t(\dot{\mathbb{B}}^{\frac{3}{2}}_{2,1})}
+\norme{V+\d_xn}_{L^1_t(\dot{\mathbb{B}}^{\frac{1}{p}}_{p,1})}+\norme{V}_{L^2_t(\dot{\mathbb{B}}^{\frac{1}{p}}_{p,1})}}$$
in terms of 
$$X_{p,0}\triangleq  \norme{(n_0,V_0)}^\ell_{\dot{\mathbb{B}}^{\frac{1}{p}}_{p,1}}
 +\norme{(n_0,V_0)}^h_{\dot{\mathbb{B}}^{\frac{3}{2}}_{2,1}}.$$
Remember that $\norme{V}_{L^2_t(\dot{\mathbb{B}}^{\frac{1}{p}}_{p,1})}$
and $\norme{V+\d_xn}_{L^1_t(\dot{\mathbb{B}}^{\frac{1}{p}}_{p,1})}^h$  are  bounded by the first four terms  of $X_p$ (see \eqref{eq:vL2}). 

\subsubsection*{Low frequencies estimates} 

We  follow  the method we used for $(TM),$   looking at $G(n)\partial_xV$ as a source term.
Owing to  Propositions \ref{LP} and \ref{Composition}, we have
$$\norme{G(n)\partial_{x}V}_{\dot\B^{\frac{1}{p}}_{p,1}}\lesssim  \norme{n}_{\dot\B^{\frac{1}{p}}_{p,1}}\norme{\partial_xV}_{\dot\B^{\frac{1}{p}}_{p,1}}.$$
Therefore, mimicking the proof of Proposition \ref{APLP}, we end up again  for all $t\geq0$ with 
\begin{multline}\label{eq:EulerBF}
\norme{(n,V)(t)}_{\dot{\mathbb{B}}^{\frac{1}{p}}_{p,1}}^\ell
+\int_0^t\norme{n}_{\dot{\mathbb{B}}^{\frac{1}{p}+2}_{p,1}}^\ell+\int_0^t\norme{V+\d_xn}_{\dot{\mathbb{B}}^{\frac{1}{p}}_{p,1}}^\ell \\
\leq C\biggl(\norme{(n_0,V_0)}^\ell_{\dot{\mathbb{B}}^{\frac{1}{p}}_{p,1}}
+\int_0^t\norme{(n,V,\d_xn)}_{\dot{\mathbb{B}}^{\frac{1}{p}}_{p,1}}
\norme{V}_{\dot{\mathbb{B}}^{\frac{1}{p}+1}_{p,1}}\biggr)\cdotp  \end{multline}

\subsubsection*{High frequencies estimates.}

One has the following proposition:
\begin{Prop} \label{HfEuler}
Let $(n,V)$ be a smooth solution of \eqref{CED4} on the  interval $[0,T],$ under assumption \eqref{eq:pressure}.  
Define $p^*$ by the relation $1/p+1/p^*=1/2.$ 
There exists a constant $C$ depending only on  the threshold $J_0$
between the low and high frequencies such that for all $t\in[0,T],$
$$\displaylines{
\norme{(n,V)(t)}^h_{\dot{\mathbb{B}}^{\frac{3}{2}}_{2,1}}+\int_0^t\norme{(n,V)}^h_{\dot{\mathbb{B}}^{\frac{3}{2}}_{2,1}}\lesssim\norme{(n_0,V_0)}^h_{\dot{\mathbb{B}}^{\frac{3}{2}}_{2,1}} + \int_0^t\norme{(\partial_xn,\partial_xV)}_{\dot{\mathbb{B}}^{\frac{1}{2}}_{2,1}}\norme{(n,V)}^h_{\dot{\mathbb{B}}^{\frac{3}{2}}_{2,1}}\hfill\cr\hfill
+\int_0^t\biggl(\norme{V}^\ell_{\dot{\mathbb{B}}^{1+\frac{1}{p^*}}_{p^*,1}}\norme{(\partial_xn,\partial_xV)}_{\dot{\mathbb{B}}^{\frac{1}{p}-1}_{p,1}}+\norme{(\partial_xn,\partial_xV)}^\ell_{\dot{\mathbb{B}}^{1-\frac{1}{p}}_{p,1}}\norme{\d_xV}^\ell_{\dot{\mathbb{B}}^{-\frac{1}{p^*}}_{p^*,1}}\biggr)\hfill\cr\hfill
+\int_0^t\|\d_xV\|_{L^\infty}\|G(n)\|^h_{\dot\B^{\frac32}_{2,1}}
+\int_0^t\biggl(\norme{(n,G(n))}^\ell_{\dot{\mathbb{B}}^{1+\frac{1}{p^*}}_{p^*,1}}\norme{\partial_xV}_{\dot{\mathbb{B}}^{\frac{1}{p}-1}_{p,1}}+\norme{\d_xV}^\ell_{\dot{\mathbb{B}}^{1-\frac{1}{p}}_{p,1}}\norme{\d_xG(n)}^\ell_{\dot{\mathbb{B}}^{-\frac{1}{p^*}}_{p^*,1}}\biggr)\cdotp}$$
\end{Prop}
\begin{proof}
We localize  System \eqref{CED4} by means of $\ddj,$ getting 
\begin{equation} \left\{ \begin{matrix}\partial_tn_j+\dot{S}_{j-1}V\partial_xn_j+\partial_xV_j+\dot{S}_{j-1}G(n)\partial_xV_j=R^1_j+ R'^1_j,\\ \partial_t
V_j+\dot{S}_{j-1}V\partial_xV_j+\partial_xn_j+V_j=R^2_j \end{matrix} \right.
\end{equation} where $$\begin{aligned} R^1_j&\triangleq \dot{S}_{j-1}V\partial_xn_j-\ddj(V\partial_xn),\qquad
R'^1_j\triangleq \dot{S}_{j-1}G(n)\partial_xV_j-\ddj(G(n)\partial_xV) \\ \andf
R^2_j&\triangleq\dot{S}_{j-1}V\partial_xV_j-\ddj(V\partial_xV).\end{aligned}$$
The only difference with  $(TM)$ is the appearance of $\dot{S}_{j-1}G(n)\partial_xV_j$ in the first equation and of the  commutator $R'^1_j$.
To handle the former term, one has to  add  a suitable weight 
in  the definition of the functional we used for $(TM)$:  for $j\geq J_0$ and $\eta=\eta(J_0)>0$ (to be chosen small enough), we set
\begin{equation*} 
\wt\cL_j^2\triangleq\int_\mathbb{R}(\partial_xn_j)^2+(1+\dot{S}_{j-1}G(n))(\partial_xV_j)^2 +\eta\int_{\mathbb{R}}V_j\partial_xn_j\cdotp
\end{equation*}
Differentiating in time this quantity and  performing several integration by parts yields:
 \begin{multline}\label{eq:Lj1} \frac{d}{dt}\wt\cL_j^2+\wt\cH_j^2 +\!\int_\R\d_x\dot S_{j-1}V\,\bigl((\d_xn_j)^2\!+\!(1+\dot S_{j-1}G(n))(\d_xV_j)^2\bigr)
 \\ -\int_\R\partial_x\dot S_{j-1}G(n)\,S_{j-1}V\,(\d_xV_j)^2
 =\int_\mathbb{R}(\partial_xV_j)^2\,\partial_t\dot{S}_{j-1}G(n)
 \\+2\int_\R\!\bigl(\d_x(R^1_j+R'^1_j)\,\d_xn_j+(1+S_{j-1}G(n))\d_xR_j^2\,\d_xV_j\bigr)
 +\eta\int_\R\bigl(\d_x(R^1_j+R'^1_j)\,V_j+ R_j^2\,\d_{x}n_j\bigr)\cdotp\end{multline}
 with $\displaystyle\wt\cH_j^2\triangleq \eta\|\d_xn_j\|_{L^2}^2+(2-\eta)\int_\R(1+\dot S_{j-1}G(n))(\d_xV_j)^2+\eta\int_\R V_j\d_xn_j.$
 \medbreak
 To continue, let us assume that 
 \begin{equation}\label{eq:smallnV}
 \|n\|_{L^\infty}+\|V\|_{L^\infty}\ll1\quad\hbox{on }\ [0,T].\end{equation}
 Then, since $G(0)=0,$ we have, using the mean value theorem 
 and the uniform boundedness of operator $\dot S_{j-1}$ in all Lebesgue spaces:
 $$\|\dot{S}_{j-1}G(n)\|_{L^\infty}\lesssim \|n\|_{L^\infty}\ll1,$$
 and thus, if $\eta$ is small enough,  
 \begin{equation}\label{eq:equivLj}
 \wt\cL_j^2\simeq \norme{(\partial_xn_j,\partial_xV_j)}_{L^2}^2\andf \wt\cH_j^2\simeq \norme{(\partial_xn_j,\partial_xV_j)}_{L^2}^2\quad
 \hbox{for all }\ j\geq J_0.
 \end{equation}
 Let us also observe that 
 $$\begin{aligned}
\partial_tG(n)&=G'(n)\partial_tn\\
&=-G'(n)(V\partial_xn+(1+G(n))\partial_xV).
\end{aligned}$$
Owing to  assumption \eqref{eq:smallnV} and to  the mean value theorem, we thus get
\begin{equation}\label{eq:dtG} \|\partial_tG(n)\|_{L^\infty} \lesssim \|\d_x V\|_{L^\infty} + \|V\|_{L^\infty}\|\d_xn\|_{L^\infty}.
\end{equation}
 Proceeding analogously, we obtain 
 \begin{equation}\label{eq:dxG}\norme{\d_xG(n)}_{L^\infty}\lesssim \|\d_xn\|_{L^\infty}.\end{equation}
Hence, from inequality \eqref{eq:Lj1} and \eqref{eq:equivLj}, we  get for some small enough $c$ 
and large enough $C,$
 \begin{equation}\label{eq:Lj2} \frac{d}{dt}\wt\cL_j^2+c\wt\cL_j^2\leq
 C\bigl(\|(\d_xV,\d_xn)\|_{L^\infty}\wt\cL_j + 2^j\|(R_j^1,R'^1_j,R^2_j)\|_{L^2}\bigr)\wt\cL_j
 \quad\hbox{for all }\ j\geq J_0.\end{equation}
At this point, taking advantage of  Lemma  \ref{SimpliCarre} yields
 \begin{equation}\label{eq:cLj}
 \wt\cL_j(t)+c\int_0^t\wt\cL_j \leq\wt\cL_j(0) +C \int_0^t\norme{(\partial_xV,\d_xn)}_{L^\infty}\cL_j
+C2^j\int_0^t\norme{(R^1_j,R'^1_j,R^2_j)}_{L^2}.\end{equation}
   Now,  multiplying \eqref{eq:cLj} by  $2^{\frac j2},$ using \eqref{eq:equivLj}
and summing up on $j\geq J_0$ gives us
\begin{multline}\label{eq:Egmn}
\norme{(n,V)(t)}^h_{\dot{\mathbb{B}}^{\frac{3}{2}}_{2,1}}+\int_0^t\norme{(n,V)}^h_{\dot{\mathbb{B}}^{\frac{3}{2}}_{2,1}}\lesssim\norme{(n_0,V_0)}^h_{\dot{\mathbb{B}}^{\frac{3}{2}}_{2,1}}\\+\int_0^t\norme{(\partial_xV,\d_xn)}_{L^\infty}\norme{(n,V)}^h_{\dot{\mathbb{B}}^{\frac{3}{2}}_{2,1}}+\int_0^t\sum_{j\geq J_0}2^{\frac{3j}{2}}\norme{(R^1_j,R'^1_j,R^2_j)}_{L^2}.
\end{multline}
 The terms $R_j^1$ and $R_j^2$ may be bounded exactly   as in the proof Proposition \ref{PropHfLp}.
 As regards    $R'^1_j,$  Lemma \ref{CP} gives us
 $$\displaylines{ \sum_{j\geq J_0} 2^{\frac32j}\|R'^1_j\|_{L^2} 
 \lesssim \|\d_xG(n)\|_{L^\infty}\|\d_x V\|^h_{\dot\B^{\frac12}_{2,1}}
 +\|\d_xV\|_{\dot\B^{\frac1p-1}_{p,1}}\|G(n)\|^\ell_{\dot\B^{1+\frac1{p^*}}_{p^*,1}} \hfill\cr\hfill
 +\|\d_xV\|_{L^\infty} \|G(n)\|^h_{\dot\B^{\frac32}_{2,1}}
 +\|\d_xV\|^\ell_{\dot\B^{1-\frac1p}_{p,1}}\|\d_xG(n)\|^\ell_{\dot\B^{-\frac1{p^*}}_{p^*,1}}.}
 $$
Using \eqref{eq:dxG} completes the proof of the proposition. \end{proof}
   

\subsubsection*{Global-in-time a priori estimate} We claim that granted with Inequalities \eqref{eq:EulerBF} and the above proposition,
we have,  whenever  Condition 
 \eqref{eq:smallnV} is satisfied on $[0,T],$
  \begin{equation}\label{eq:XpEuler}
  X_p(t) \lesssim  X_{p,0} + X_p^2(t) \quad\hbox{for all }\ t\in[0,T].\end{equation}
 Inequality  \eqref{eq:EulerBF} is   exactly the same as for $(TM).$
Hence,  the terms in $X_p(t)$ corresponding to the low frequencies of $(n,V)$ are  bounded by $X_p^2(t).$
Note also that $\|v\|_{L_t^2(\dot\B^{\frac1p}_{p,1})}$ may be bounded according to \eqref{eq:vL2}, and thus eventually 
by $X_p^2(t).$ 
\smallbreak
  In order to handle the high frequencies,  we shall proceed differently depending on whether
  $P(\rho)=\rho^\gamma/\gamma$ or $P$ is a general  pressure law with $P'(1)=1.$ 
  In fact, to handle  the latter case, we need to assume that $p=2.$

  \subsubsection*{1. Case $P(\rho)= \rho^\gamma/\gamma$ with $\gamma>0$}
 Then, $G(n)=(\gamma-1) n$ and  the inequality of  
  Proposition \ref{HfEuler} reduces to 
  $$\displaylines{
\norme{(n,V)(t)}^h_{\dot{\mathbb{B}}^{\frac{3}{2}}_{2,1}}+\int_0^t\norme{(n,V)}^h_{\dot{\mathbb{B}}^{\frac{3}{2}}_{2,1}}\lesssim\norme{(n_0,V_0)}^h_{\dot{\mathbb{B}}^{\frac{3}{2}}_{2,1}} + \int_0^t\norme{(\partial_xn,\partial_xV)}_{\dot{\mathbb{B}}^{\frac{1}{2}}_{2,1}}\norme{(n,V)}^h_{\dot{\mathbb{B}}^{\frac{3}{2}}_{2,1}}\hfill\cr\hfill
+\int_0^t\biggl(\norme{V}^\ell_{\dot{\mathbb{B}}^{1+\frac{1}{p^*}}_{p^*,1}}\norme{(\partial_xn,\partial_xV)}_{\dot{\mathbb{B}}^{\frac{1}{p}-1}_{p,1}}+\norme{(\partial_xn,\partial_xV)}^\ell_{\dot{\mathbb{B}}^{1-\frac{1}{p}}_{p,1}}\norme{\d_xV}^\ell_{\dot{\mathbb{B}}^{-\frac{1}{p^*}}_{p^*,1}}\biggr)
\hfill\cr\hfill
+\int_0^t\biggl(\norme{n}^\ell_{\dot{\mathbb{B}}^{1+\frac{1}{p^*}}_{p^*,1}}\norme{\partial_xV}_{\dot{\mathbb{B}}^{\frac{1}{p}-1}_{p,1}}+\norme{\partial_xV}^\ell_{\dot{\mathbb{B}}^{1-\frac{1}{p}}_{p,1}}
\norme{\d_xn}^\ell_{\dot{\mathbb{B}}^{-\frac{1}{p^*}}_{p^*,1}}\biggr)\cdotp}$$
Compared to our study of $(TM),$ only the last line is new. 
However, one can use the fact that 
$$\begin{aligned}
\int_0^t\norme{n}^\ell_{\dot{\mathbb{B}}^{1+\frac{1}{p^*}}_{p^*,1}}\norme{\partial_xV}_{\dot{\mathbb{B}}^{\frac{1}{p}-1}_{p,1}}&\lesssim \norme{n}^\ell_{L_t^2(\dot{\mathbb{B}}^{1+\frac{1}{p}}_{p,1})}
\norme{V}_{L_t^2(\dot{\mathbb{B}}^{\frac{1}{p}}_{p,1})}\\
\int_0^t\norme{\partial_xV}^\ell_{\dot{\mathbb{B}}^{1-\frac{1}{p}}_{p,1}}
\norme{\d_xn}^\ell_{\dot{\mathbb{B}}^{-\frac{1}{p^*}}_{p^*,1}}
&\lesssim\norme{V}^\ell_{L_t^1(\dot{\mathbb{B}}^{1+\frac{1}{p}}_{p,1})}
\norme{n}^\ell_{L_t^\infty(\dot{\mathbb{B}}^{\frac{1}{p}}_{p,1})}\qquad\hbox{as }\ 1-\frac1{p^*}\geq\frac1p\cdotp
\end{aligned}$$
The terms on the right may be bounded by $X^2_p(t).$ Hence we have \eqref{eq:XpEuler}.

    \subsubsection*{2. Case of a general pressure law with $P'(1)=1$} For $p=2,$ Proposition \ref{HfEuler}
    together with the embeddings $\dot\B^{\frac12}_{2,1}\hookrightarrow\dot\B^0_{\infty,1}\hookrightarrow L^\infty$ and 
     $\dot\B^{\frac32}_{2,1}\hookrightarrow\dot\B^1_{\infty,1}$
         give us 
$$\displaylines{
\norme{(n,V)(t)}^h_{\dot{\mathbb{B}}^{\frac{3}{2}}_{2,1}}+\int_0^t\norme{(n,V)}^h_{\dot{\mathbb{B}}^{\frac{3}{2}}_{2,1}}\lesssim\norme{(n_0,V_0)}^h_{\dot{\mathbb{B}}^{\frac{3}{2}}_{2,1}} + \int_0^t\norme{(\partial_xn,\partial_xV)}_{\dot{\mathbb{B}}^{\frac{1}{2}}_{2,1}}\norme{(n,V)}^h_{\dot{\mathbb{B}}^{\frac{3}{2}}_{2,1}}\hfill\cr\hfill
+\int_0^t\norme{V}^\ell_{\dot{\mathbb{B}}^{\frac32}_{2,1}}\norme{(n,V)}_{\dot{\mathbb{B}}^{\frac{1}{2}}_{2,1}}
+\int_0^t\|\d_xV\|_{\dot\B^{\frac12}_{2,1}}\|G(n)\|_{\dot\B^{\frac32}_{2,1}}
+\int_0^t\norme{(n,G(n))}^\ell_{\dot{\mathbb{B}}^{\frac32}_{2,1}}\norme{V}_{\dot{\mathbb{B}}^{\frac{1}{2}}_{2,1}}\cdotp}$$
Since, by Proposition \ref{Composition} and Cauchy-Schwarz inequality, we have
$$\begin{aligned}
\int_0^t\|\d_xV\|_{\dot\B^{\frac12}_{2,1}}\|G(n)\|_{\dot\B^{\frac32}_{2,1}}&\lesssim
\int_0^t\|\d_xV\|_{\dot\B^{\frac12}_{2,1}}\|n\|_{\dot\B^{\frac32}_{2,1}}
\lesssim \|\d_xV\|_{L_t^2(\dot\B^{\frac12}_{2,1})}\|n\|_{L_t^2(\dot\B^{\frac32}_{2,1})},\\
 \int_0^t\norme{(n,G(n))}^\ell_{\dot{\mathbb{B}}^{\frac32}_{2,1}}\norme{V}_{\dot{\mathbb{B}}^{\frac{1}{2}}_{2,1}}&\lesssim
  \norme{n}_{L_t^2(\dot{\mathbb{B}}^{\frac32}_{2,1})}\norme{V}_{L_t^2(\dot{\mathbb{B}}^{\frac{1}{2}}_{2,1})},
  \end{aligned}
  $$
  one can conclude that \eqref{eq:XpEuler} is satisfied. 

\subsubsection*{Uniqueness} As  for $(TM),$ we look 
at the system satisfied by the difference $(\dn,\dV):=(n_2-n_1,V_2-V_1)$ between two solutions, namely:
\begin{equation}\label{eq:uniqE1}\left\{\begin{array}{l} \d_t\dn+\d_x\dV+V_2\,\d_x\dn+G(n_2)\partial_x\dV
=-\dV\,\d_xn_1-(G(n_2)-G(n_1))\partial_xV_1,\\[1ex]
\d_t\dV +\dV +\d_x\dn +V_2\,\d_x\dV=-\dV\,\d_xV_1,\end{array}\right.\end{equation}
and estimate $(\dn,\dV)$ for all $T>0$ in the space $F_p(T)$ defined in \eqref{eq:FpT}.
Compared to the proof of uniqueness for $(TM)$ we have to handle the two terms containing the function $G.$  
\smallbreak
Let us first explain how to estimate the low frequencies.  We have to bound
the additional terms $G(n_2)\partial_x\dV$ and $(G(n_2)-G(n_1))\partial_xV_1$ in $\dot\B^{\frac2p-\frac12}_{p,1}.$
Now, according to  \eqref{eq:loi1} and \eqref{eq:loi2}, we have 
$$\begin{aligned}
\|(G(n_2)-G(n_1))\partial_xV_1\|^\ell_{\dot\B^{\frac2p-\frac12}_{p,1}}&\lesssim  \|G(n_2)-G(n_1)\|_{\dot\B^{\frac1p}_{p,1}}
\|\d_xV_1\|_{\dot\B^{\frac2p-\frac12}_{p,1}},\\
\|G(n_2)\partial_x\dV\|^\ell_{\dot\B^{\frac2p-\frac12}_{p,1}}&\lesssim \|n_2\|_{\dot\B^{\frac1p}_{p,1}\cap\dot\B^{\frac1p+1}_{p,1}}\|\d_x\dV\|_{\dot\B^{\frac2p-\frac32}_{p,1}}.
\end{aligned}
$$
From the relation  
\begin{equation}\label{eq:GG}
G(n_2)-G(n_1)= \dn\,\int_0^1 G'(n_1+\tau \dn)\,d\tau,\end{equation}
and  Propositions \ref{LP} and \ref{Composition}, we find out:
$$\|G(n_2)-G(n_1)\|_{\dot\B^{\frac1p}_{p,1}}\lesssim \|\dn\|_{\dot\B^{\frac1p}_{p,1}}.$$
Therefore, we eventually have
 \begin{multline}\label{eq:uniqE2} 
 \|(\dn,\dV)(t)\|^\ell_{\dot\B^{\frac2p-\frac12}_{p,1}}\lesssim \int_0^t
 \bigl(\|(\d_xn_1,\d_xV_1)\|_{\dot\B^{\frac2p-\frac12}_{p,1}}
 \|\dV\|_{\dot\B^{\frac1p}_{p,1}}
 +\|V_2\|_{\dot\B^{\frac1p}_{p,1}\cap\dot\B^{\frac1p+1}_{p,1}}\|(\dn,\dV)\|_{\dot\B^{\frac2p-\frac12}_{p,1}}\bigr)\\+\int_0^t
 \bigl(\|\d_xV_1\|_{\dot\B^{\frac2p-\frac12}_{p,1}}
 \|\dn\|_{\dot\B^{\frac1p}_{p,1}}
 +\|n_2\|_{\dot\B^{\frac1p}_{p,1}\cap\dot\B^{\frac1p+1}_{p,1}}\|\dV\|_{\dot\B^{\frac2p-\frac12}_{p,1}}\bigr)\cdotp
 \end{multline}
Let us next estimate   the high frequencies  of $(\dn,\dV)$  in $\dot\B^{\frac12}_{2,1}.$ Applying operator $\ddj$ to \eqref{eq:uniqE1}, we 
get for all $j\geq J_0,$ 
$$
\left\{\begin{array}{l} \d_t\dn_j+\d_x\dV_j+\dot S_{j-1}V_2\,\d_x\dn_j+\dot S_{j-1}G(n_2)\partial_x\dV_j
\\\hspace{5cm}=-\ddj\bigl(\dV\,\d_xn_1+(G(n_2)-G(n_1))\partial_xV_1\bigr)+\dR_j^1+\dR'^1_j,\\[1ex]
\d_t\dV_j +\dV_j +\d_x\dn_j +\dot S_{j-1}V_2\,\d_x\dV_j=-\ddj(\dV\,\d_xV_1)+\dR_j^2,\end{array}\right.$$
with $\dR_j^1\triangleq \dot S_{j-1}V_2\d_x\dn_j-\ddj(V_2\d_x\dn),$
$\dR'^1_j\triangleq \dot S_{j-1}G(n_2)\d_x\dV_j-\ddj(G(n_2)\d_x\dV)$
and $\dR_j^2\triangleq \dot S_{j-1}V_2\d_x\dV_j-\ddj(V_2\d_x\dV).$
\smallbreak
Arguing as in the proof of Proposition \ref{HfEuler}, we consider the functional 
$$
\int_\mathbb{R}(\partial_x\dn_j)^2+(1+\dot{S}_{j-1}G(n_2))(\partial_x\dV_j)^2 + \eta \int_{\mathbb{R}}\dV_j\,\partial_x\dn_j$$
and follow the computations therein, with regularity exponent $1/2$ instead of $3/2.$ We get 
 $$\displaylines{
\|(\dn,\dV)(t)\|^h_{\dot\B^{\frac12}_{2,1}}\!\lesssim\!
\int_0^t\|(\partial_xn_2,\d_xV_2)\|_{L^\infty}\|(\dn,\dV)\|^h_{\dot\B^{\frac12}_{2,1}}
+\int_0^t\!\sum_{j\geq J_0}\! 2^{\frac j2}\bigl(\|\dR_j^1\|_{L^2}+\|\dR'^1_j\|_{L^2}+\|\dR_j^2\|_{L^2}\bigr)
\hfill\cr\hfill
+\int_0^t\bigl( \|\dV\,\d_xn_1\|^h_{\dot\B^{\frac12}_{2,1}}+\|\dV\,\d_xV_1\|^h_{\dot\B^{\frac12}_{2,1}}+\|(G(n_2)\!-\!G(n_1))\d_xV_1\|^h_{\dot\B^{\frac12}_{2,1}}\bigr)\cdotp}
$$
The terms with $\dR^1_j$ and $\dR^2_j$ may be bounded as in the proof of uniqueness for $(TM).$
Regarding $\dR'^1_j,$ we use   Lemma \ref{CP} with $s=1/2,$ and get
 $$\displaylines{ \sum_{j\geq J_0} 2^{\frac j2}\|\dR'^1_j\|_{L^2} 
 \lesssim \|\d_xG(n_2)\|_{L^\infty}\|\dV\|^h_{\dot\B^{\frac12}_{2,1}}
 +\|\d_x\dV\|_{\dot\B^{\frac1p-1}_{p,1}}\|G(n_2)\|^\ell_{\dot\B^{1+\frac1{p^*}}_{p^*,1}} \hfill\cr\hfill
 +\|\d_x\dV\|_{L^\infty} \|G(n_2)\|^h_{\dot\B^{\frac12}_{2,1}}
 +\|\d_x\dV\|^\ell_{\dot\B^{1-\frac1p}_{p,1}}\|\d_xG(n_2)\|^\ell_{\dot\B^{-\frac1{p^*}}_{p^*,1}}.}$$
To continue the proof, we have two distinguish two cases depending on whether $P(\rho)=\rho^\gamma/\gamma$ and $2\leq p\leq 4,$ or $P$
is a general pressure law with $P'(1)=1,$ and $p=2.$
In the first case, we have $G(n)=(\gamma-1)n,$ so that 
$G(n_2)-G(n_1)=(\gamma-1)\dn.$ Now, in light of  \eqref{eq:prod4}, we have
$$\|\dn\,\d_xV_1\|^h_{\dot\B^{\frac12}_{2,1}}\lesssim\bigl(\|\dn\|^\ell_{\dot\B^{\frac2p-\frac12}_{p,1}}+\|\dn\|^h_{\dot\B^{\frac12}_{2,1}}\bigr)
\bigl(\|\d_xV_1\|^\ell_{\dot\B^{\frac1p-1}_{p,1}}+\|\d_xV_1\|_{\dot\B^{\frac12}_{2,1}}^h\bigr)\cdotp$$
As all the terms with $G(n_2)$ in the estimate for $R'^1_j$ are proportional to $n_2,$ we  arrive at
 \begin{multline}\label{eq:uniqE4}
 \|(\dn,\dV)(t)\|^h_{\dot\B^{\frac12}_{2,1}}\\\lesssim \int_0^t
 \bigl(\|(n_1,n_2,V_1,V_2)\|_{\dot\B^{\frac1p}_{p,1}}^\ell
 +\|(n_1,n_1,V_1,V_2)\|_{\dot\B^{\frac32}_{2,1}}^h+\|(\d_xn_1,\d_xV_1)\|_{\dot\B^{\frac2p-\frac12}_{p,1}}\bigr)\|(\dn,\dV)\|_{F_p}.\end{multline}
 In the case $p=2$ with $P'(1)=1,$ then one may proceed essentially as in the proof of Proposition  \ref{HfEuler} to bound
 the terms with $G(n_2)$  in the estimate for $R'^1_j,$ and one can use 
Proposition \ref{Composition} combined with product laws  and Relation \eqref{eq:GG} to eventually arrive at
$$\|(G(n_2)-G(n_1))\partial_xV_1\|_{\dot\B^{\frac12}_{2,1}}\lesssim  \|\dn\|_{\dot\B^{\frac12}_{2,1}}\|\partial_xV_1\|_{\dot\B^{\frac12}_{2,1}}.$$
Consequently, \eqref{eq:uniqE4} still holds true. 
\smallbreak
In all cases, putting \eqref{eq:uniqE2} and \eqref{eq:uniqE4} together yields
 $$\displaylines{\|(\dn,\dV)\|_{F_p(t)}\hfill\cr\hfill\leq C\int_0^t\bigl(\|(n_1,n_2,V_1,V_2)\|_{\dot\B^{\frac1p}_{p,1}}^\ell
 +\|(n_1,n_2,V_1,V_2)\|_{\dot\B^{\frac32}_{2,1}}^h+\|(\d_xn_1,\d_xV_1)\|_{\dot\B^{\frac2p-\frac12}_{p,1}}\bigr)\|(\dn,\dV)\|_{F_p},}$$
 and using Gronwall lemma  completes the proof of  uniqueness.
 \subsubsection*{Decay estimates} Here, we  assume that  $p=2$ and follow the same  approach as for $(TM).$
 \subsubsection*{Step 1: estimating the solution in $\dot\B^{-\sigma_1}_{2,\infty}$} This is 
 only a matter of  handling  the additional term $G(n)\partial_xv.$ 
 Applying $\ddj$ to the system satisfied by $(n,V)$ yields
 $$
\left\{\begin{array}{l}
\d_tn_j+\d_xV_j=-v\d_xn_j-G(n)\partial_xV+[V,\ddj]\d_xn,\\
\d_tV_j+\d_xn_j+V_j=-V\d_xV_j+[v,\ddj]\d_xV.
\end{array}\right.$$ 
So, considering  $G(n)\partial_xV$ as a source term, we get
 $$\displaylines{\|(n_j,V_j)(t)\|_{L^2} + \int_0^t\|V_j\|_{L^2} \leq \|(n_j,V_j)(0)\|_{L^2}
+\int_0^t\|\d_xV\|_{L^\infty}\|(n_j,V_j)\|_{L^2} \hfill\cr\hfill+\int_0^t\|[V,\ddj]\d_xn\|_{L^2}
+\int_0^t\|[V,\ddj]\d_xV\|_{L^2}+\norme{\ddj(G(n)\partial_xV)}_{L^2}\norme{n_j}_{L^2}.}$$
 We have
 $$\|G(n)\partial_xV\|_{\dot\B^{-\sigma_1}_{2,\infty}}\leq \|G(n)\|_{\dot\B^{-\sigma_1}_{2,\infty}}\|\partial_xV\|_{\dot\B^{\frac{1}{2}}_{2,1}}. $$ 
 In order to bound $G(n)$ in $\dot\B^{-\sigma_1}_{2,\infty},$ one cannot readily use Proposition \ref{Composition} since  $-\sigma_1$  may be negative.
 However, from Taylor formula, we know  that there exists a smooth function $H$ vanishing at $0$ such that 
 $$ G(n)= G'(0)\,n + H(n)\,n.$$
 Hence, combining product and composition estimates gives
 $$\|G(n)\|_{\dot\B^{-\sigma_1}_{2,\infty}}
 \lesssim \|n\|_{\dot\B^{-\sigma_1}_{2,\infty}}\bigl(1 + \|n\|_{\dot\B^{\frac12}_{2,1}}\bigr)\cdotp$$
 In the regime we consider, $\|n\|_{\dot\B^{\frac12}_{2,1}}$ is small. Hence we conclude that  
 $$\|(n,V)(t)\|_{\dot\B^{-\sigma_1}_{2,\infty}}\leq \|(n_0,V_0)\|_{\dot\B^{-\sigma_1}_{2,\infty}}
+ C \int_0^t\|\d_xV\|_{\dot\B^{\frac12}_{2,1}} \|(n,V)\|_{\dot\B^{-\sigma_1}_{2,\infty}},$$
which ensures after using Gronwall lemma and the bound of $\|\d_xV\|_{L^1(\dot\B^{\frac12}_{2,1})}$ in terms  of $X_{2,0},$ that 
$$\forall t\in\R^+,\;     \|(n,V)(t)\|_{\dot\B^{-\sigma_1}_{2,\infty}}\leq C\|(n_0,V_0)\|_{\dot\B^{-\sigma_1}_{2,\infty}}.$$
 \subsubsection*{Step 2: Lyapunov functional}  
We aim at  exhibiting a Lyapunov functional that is equivalent to $\|(n,V)\|_{\dot\B^{\frac12}_{2,1}\cap\dot\B^{\frac32}_{2,1}}.$ 
The high frequency part of the solution has already been treated efficiently with  $\wt\cL_j.$ To bound  the low frequency part,
consider  the evolution equation for   $z\triangleq V+\d_xn$:
$$\d_tz+V\d_xz +z=-\d^2_{xx}V-\d_xV\d_xn-\d_x(G(n)\d_xV).$$
Following the computations we did for $(TM)$ leads to 
$$\displaylines{
\|z(t)\|^\ell_{\dot\B^{\frac12}_{2,1}}+\int_0^t\|z\|^\ell_{\dot\B^{\frac12}_{2,1}}\leq \|z_0\|^\ell_{\dot\B^{\frac12}_{2,1}}+
\int_0^t\|\d^2_{xx}V\|_{\dot\B^{\frac12}_{2,1}}^\ell\hfill\cr\hfill+
C\int_0^t\|\d_xV\|_{\dot\B^{\frac12}_{2,1}}\|z\|_{\dot\B^{\frac12}_{2,1}} + C\int_0^t\|\d_xV\|_{\dot\B^{\frac12}_{2,1}}\|\d_xn\|_{\dot\B^{\frac12}_{2,1}} 
+\int_0^t\|\d_x(G(n)\d_xV)\|^\ell_{\dot\B^{\frac12}_{2,1}}.}
$$
The last term may be  bounded. by  $\|G(n)\d_xV\|_{\dot\B^{\frac12}_{2,1}}.$  Then, using  Propositions \ref{LP} and \ref{Composition}, one
 ends up with
\begin{equation}\label{eq:Z}
\|z(t)\|^\ell_{\dot\B^{\frac12}_{2,1}}+\int_0^t\|z\|^\ell_{\dot\B^{\frac12}_{2,1}}\leq \|z_0\|^\ell_{\dot\B^{\frac12}_{2,1}}+
\int_0^t\|\d^2_{xx}V\|_{\dot\B^{\frac12}_{2,1}}^\ell+
C\int_0^t\|\d_xV\|_{\dot\B^{\frac12}_{2,1}}\|(z,n,\d_xn)\|_{\dot\B^{\frac12}_{2,1}}.\end{equation}
Next, using the fact that
$$
\d_tn+V\d_xn-\d^2_{xx}n=-G(n)\d_xV-\d_xz,$$
we get
\begin{equation}\label{eq:n} 
\|n(t)\|^\ell_{\dot\B^{\frac12}_{2,1}}+\int_0^t\|n\|^\ell_{\dot\B^{\frac52}_{2,1}}\leq \|n_0\|^\ell_{\dot\B^{\frac12}_{2,1}}+
\int_0^t\|\d_{x}z\|_{\dot\B^{\frac12}_{2,1}}^\ell+
C\int_0^t\|\d_xV\|_{\dot\B^{\frac12}_{2,1}}\|n\|_{\dot\B^{\frac12}_{2,1}}.\end{equation}
The high frequency part of the solution may be bounded according to \eqref{eq:cLj}.
Hence, setting
$$\wt\cL\triangleq \sum_{j\leq J_0} 2^{\frac j2} \|(\ddj n,\ddj z)\|_{L^2}  + \sum_{j\geq J_0} 2^{\frac j2}\wt\cL_j
\andf \wt\cH\triangleq \|V+\d_xn\|_{\dot\B^{\frac12}_{2,1}}^\ell+\|V\|_{\dot\B^{\frac32}_{2,1}}^h+\|n\|_{\dot\B^{\frac52}_{2,1}}^\ell
+\|n\|_{\dot\B^{\frac32}_{2,1}}^h$$  and bounding  $R_j^1,$ $R'^1_j$ and $R_j^2$ as in the proof of Proposition 
\ref{HfEuler}, we discover that 
if taking $J_0$  negative enough, then all the linear terms in \eqref{eq:Z} and \eqref{eq:n} may be absorbed by $\wt\cH,$ so that 
we have for some suitably small positive $c,$
\begin{equation}\label{eq:wtcL0}
\wt\cL(t)+c\int_0^t\wt\cH\leq\wt\cL(0) +C\int_0^t\|\d_xV\|_{\dot\B^{\frac12}_{2,1}}\cL 
+C\int_0^t\|n\|_{\dot\B^{\frac32}_{2,1}}\|V\|_{\dot\B^{\frac12}_{2,1}}.
\end{equation}
Above, we used that $\wt\cL\simeq \|(n,V)\|_{\dot\B^{\frac12}_{2,1}\cap\dot\B^{\frac32}_{2,1}}$
and  that $\wt\cH\gtrsim \|\d_xV\|_{\dot\B^{\frac12}_{2,1}}.$  Now,  since furthermore
$\wt\cH\gtrsim \|z\|_{\dot\B^{\frac12}_{2,1}}$ and $\wt\cL\gtrsim \|z\|_{\dot\B^{\frac12}_{2,1}},$  one may write
$$\begin{aligned}
 \|n\|_{\dot\B^{\frac32}_{2,1}}\|V\|_{\dot\B^{\frac12}_{2,1}}&\leq  \|n\|_{\dot\B^{\frac32}_{2,1}}^2
 + \|n\|_{\dot\B^{\frac32}_{2,1}}\|z\|_{\dot\B^{\frac12}_{2,1}}\\
 &\lesssim \|n\|_{\dot\B^{\frac12}_{2,1}}^\ell \|n\|_{\dot\B^{\frac52}_{2,1}}^\ell
  + (\|n\|_{\dot\B^{\frac32}_{2,1}}^h)^2
  + \|n\|^\ell_{\dot\B^{\frac32}_{2,1}}\|z\|_{\dot\B^{\frac12}_{2,1}}
  + \|n\|^h_{\dot\B^{\frac32}_{2,1}}\|z\|_{\dot\B^{\frac12}_{2,1}}\\&\lesssim
  \wt\cL\wt\cH+\wt\cL\wt\cH+\wt\cL\wt\cH+\wt\cH\wt\cL.\end{aligned}
 $$
 Hence, if $\wt\cL(0)$ is small enough  then, combining \eqref{eq:wtcL0} with a bootstrap argument 
  yields
 $$\wt\cL(t)+\frac c2\int_0^t\wt\cH\leq\wt\cL(0)\quad\hbox{for all }\ t\geq0.$$
  \subsubsection*{Step 3: Proof of decay estimates}
  {}From this point,  one can repeat word for word the proof  of decay estimates for  the low frequencies 
 of the solutions to  (TM). 
 \medbreak
 For the high frequencies,  starting from \eqref{eq:Lj2}, using  Lemma \ref{CP}
 and integrating gives 
 $$
\norme{(n,V)(t)}^h_{\dot{\mathbb{B}}^{\frac{3}{2}}_{2,1}}\lesssim e^{-ct}\norme{(n_0,V_0)}^h_{\dot{\mathbb{B}}^{\frac{3}{2}}_{2,1}}+\int_0^te^{-c(t-\tau)}\bigl(\norme{V}_{\dot{\mathbb{B}}^{\frac{3}{2}}_{2,1}}\norme{(n,V)}_{\dot{\mathbb{B}}^{\frac{3}{2}}_{2,1}}
+\norme{V}_{\dot{\mathbb{B}}^{\frac{1}{2}}_{2,1}}\norme{n}_{\dot{\mathbb{B}}^{\frac{3}{2}}_{2,1}}\bigr)\cdotp
$$
Compared to   \eqref{eq:decayhf},   there is one more term.   However,  as for $(TM),$ Steps 1 and 2 together imply  that 
$$\|(n,V)(t)\|_{\dot\B^{\frac12}_{2,1}\cap \dot\B^{\frac32}_{2,1}}\lesssim \langle t\rangle^{-\alpha_1}.$$  
Hence, one may easily conclude that 
$$\|(n,V)(t)\|_{\dot\B^{\frac32}_{2,1}}^h\lesssim \langle t\rangle^{-2\alpha_1}.$$  
This completes the proof of the theorem (up to the proof of 
existence, which is totally analogous as for  $(TM)$).  
  \end{proof}


\section{A more general 1D model}\label{s:general}
In this section, we consider a more general class of one dimensional systems, namely
\begin{equation} \left\{ \begin{matrix}\partial_tu + \alpha\partial_xv+V^1\partial_xu+W^1\partial_xv=0,\\ \partial_tv+\beta\partial_xu+V^2\partial_xu+W^2\partial_xv+\lambda v+\kappa\lambda v^q=0 \end{matrix} \right. \label{SystGen1D}
\end{equation}
where\footnote{In the case $q=3$ and $\kappa>0,$   $\kappa v^q$ is a classical representation of a drag term.}
 $\kappa$ is a real parameter, $q\geq 2,$  an integer, $V^1=V^1(v)$ and $V^2=V^2(v)$ are smooth functions vanishing at $0$, 
$W^1=W^1(u,v)$ and $W^2=W^2(u,v)$ are smooth functions vanishing at $(0,0),$ and $\alpha$, $\beta$, $\lambda$ are strictly positive constants.
\begin{Thm} \label{ThmGenL2}  Let the data $(u_0,v_0)$ satisfy the assumptions of  Theorem \ref{ThmExistLp} 
with  $J_\lambda\triangleq\left\lfloor\rm log_2\lambda\right\rfloor$ and $p=2.$  Then, System \eqref{SystGen1D} admits a unique global solution $(u,v)$ verifying the same properties as the solution therein. Furthermore,  Corollary \ref{CorLambda} and Theorem \ref{ThmDecayTM} hold true. \end{Thm}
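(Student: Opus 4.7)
The scaling $(u,v)(t,x)=(\tilde u,\tilde v)(\lambda t,\lambda x)$ reduces System \eqref{SystGen1D} to $\lambda=1$: both the $\lambda v$ and the $\kappa\lambda v^q$ terms become $\tilde v$ and $\kappa\tilde v^q$ in the rescaled equations. From there, the strategy mimics step by step the $L^2$ analysis of Section \ref{s:L2}, adapted as in Section \ref{s:Euler} to variable coefficients and to the superlinear drag.

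The principal matrix $A=\bigl(\begin{smallmatrix}0&\alpha\\\beta&0\end{smallmatrix}\bigr)$ is symmetrized by $\mathrm{diag}(\beta,\alpha)$, so the natural dyadic Lyapunov functional is
\[
\cL_j^2\triangleq \beta\|u_j\|_{L^2}^2+\alpha\|v_j\|_{L^2}^2+\beta\|\d_xu_j\|_{L^2}^2+\alpha\|\d_xv_j\|_{L^2}^2+\eta\int_\R v_j\,\d_xu_j,
\]
with $\eta=\eta(\alpha,\beta)>0$ small enough to preserve the equivalence \eqref{eq:Xj}; the associated linear dissipation rate is $\min(1,2^{2j})\cL_j^2$, as in $(TM)$. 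The low-frequency effective unknown, designed to isolate the damped mode of the linearized symbol, is
\[
z\triangleq v+\beta\,\d_xu,
\]
and a direct computation from \eqref{SystGen1D} with $\lambda=1$ yields
\[
\d_tz+z=-\alpha\beta\,\d_{xx}v+R(u,v,\d_xu,\d_xv),
\]
where $R$ collects purely quadratic-or-higher nonlinear remainders coming from $V^i(v)\d_x\cdot$, $W^i(u,v)\d_x\cdot$ and $\kappa v^q$. Reproducing the derivation of \eqref{eq:zzz} with this $z$ delivers the extra low-frequency time integrability $\|z\|^\ell_{L^1_t(\dot\B^{1/2}_{2,1})}$.

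The quasilinear coefficients $V^i(v)\d_x(\cdot)$ and $W^i(u,v)\d_x(\cdot)$ generate, after paralinearization, commutators $[\dot S_{j-1}V^i,\ddj]\d_x(\cdot)$ and $[\dot S_{j-1}W^i,\ddj]\d_x(\cdot)$ that are bounded by Lemma \ref{CP} exactly as the Euler commutator $R'^1_j$ in Proposition \ref{HfEuler}, using $V^i(0)=0$, $W^i(0,0)=0$ and Proposition \ref{Composition}. The drag $\kappa v^q$ is handled by Propositions \ref{LP} and \ref{Composition}:
\[
\|v^q\|_{\dot\B^{s}_{2,1}}\lesssim \|v\|_{\dot\B^{1/2}_{2,1}}^{q-1}\|v\|_{\dot\B^{s}_{2,1}}\quad\text{for }s\in\{1/2,3/2\},
\]
which is harmless since $\|v\|_{\dot\B^{1/2}_{2,1}}$ is small. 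Assembling these ingredients as in Proposition \ref{p:bound} leads to the self-improving bound $X_2(t)\leq C(X_{2,0}+X_2^2(t)+X_2^q(t))$ and therefore to global existence by a standard bootstrap. Uniqueness in the space $F_2(T)$ of \eqref{eq:FpT} is established exactly as in the previous two sections, since the new contributions from $W^i$ and $v^q$ are, upon using the smallness of $(u_i,v_i)$, at least quadratic in $(\du,\dv)$ and do not disturb the Gronwall closure.

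For the decay estimates we follow the three-step proof of Section \ref{s:L2}. First, the negative regularity $\dot\B^{-\sigma_1}_{2,\infty}$ is propagated by a Gronwall argument, writing $V^i(v)=V^{i\prime}(0)v+\wt V^i(v)v$ (and similarly for $W^i$) and $v^q=v\cdot v^{q-1}$, just as was done for $G(n)$ in Step~1 of the Euler proof. Second, we build a Lyapunov functional $\wt\cL\simeq\|(u,v)\|_{\dot\B^{1/2}_{2,1}\cap\dot\B^{3/2}_{2,1}}$ by combining $\sum_{j\in\Z}2^{j/2}\cL_j$ with a small multiple of $\|z\|^\ell_{\dot\B^{1/2}_{2,1}}$, deriving $\tfrac{d}{dt}\wt\cL+c\wt\cH\leq0$. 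Third, interpolation and a bootstrap argument yield the algebraic decay rates of Theorem \ref{ThmDecayTM}. The main obstacle, relative to the Euler case, is the simultaneous presence of the cross-coupling $W^1(u,v)\d_xv$ in the $u$-equation (absent from both $(TM)$ and Euler) and of the superlinear friction $\kappa v^q$; however, since $W^1(0,0)=0$ and $q\geq2$, both contributions are at least quadratic in $(u,v)$ and can be absorbed in $X_2^2(t)+X_2^q(t)$ provided $X_{2,0}$ is sufficiently small.
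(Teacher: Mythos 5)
Your overall architecture (rescale to a normalized system, take $z=v+\d_xu$ as the damped effective unknown, paralinearize and control commutators via Lemma~\ref{CP}, propagate $\dot\B^{-\sigma_1}_{2,\infty}$ and build a Lyapunov functional) matches the paper's route. But there is one genuine gap: the choice of dyadic functional.

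You propose
$$\cL_j^2\triangleq \beta\|u_j\|_{L^2}^2+\alpha\|v_j\|_{L^2}^2+\beta\|\d_xu_j\|_{L^2}^2+\alpha\|\d_xv_j\|_{L^2}^2+\eta\int_\R v_j\,\d_xu_j,$$
i.e.\ the symmetrizer $\mathrm{diag}(\beta,\alpha)$ of the constant-coefficient principal part. This is not enough here, because after localization the principal part of the system is $(1+W^1)\partial_xv_j$ in the first equation and $(1+V^2)\partial_xu_j$ in the second (the nonlinear coefficients sit directly in front of the top-order derivatives). Differentiating $\|\d_xu_j\|_{L^2}^2+\|\d_xv_j\|_{L^2}^2$ in time then leaves the quadratic term
$$\int_\R (W^1-V^2)\,\partial_xv_j\,\partial^2_{xx}u_j,$$
which costs one extra derivative: its size is $\sim 2^j\|(u,v)\|_{L^\infty}\cL_j^2$, and the $2^j$ is not absorbed by the dissipation at high frequencies, so the energy estimate does not close. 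Precisely to kill this term the paper weights the gradient pieces of the dyadic functional by the coefficients of the symbol, setting
$$\cL_j^2\triangleq\|(u_j,v_j)\|_{L^2}^2+\int_\R v_j\,\partial_xu_j+\int_\R(1+V^2)(\partial_xu_j)^2+\int_\R(1+W^1)(\partial_xv_j)^2,$$
so that the two dangerous cross integrals
$$\int_\R(1+V^2)\partial_xu_j\,\partial_x\bigl((1+W^1)\partial_xv_j\bigr)\quad\hbox{and}\quad\int_\R(1+W^1)\partial_xv_j\,\partial_x\bigl((1+V^2)\partial_xu_j\bigr)$$
cancel identically after integrating by parts (they sum to $\int_\R\partial_x\bigl[(1+V^2)\partial_xu_j\cdot(1+W^1)\partial_xv_j\bigr]=0$). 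This is stated in the paper as ``the fundamental observation that justifies our using those very weights.'' The remaining error terms produced by $\partial_t V^2$, $\partial_tW^1$, $\partial_xV^2$, $\partial_xW^1$ are all lower order and can be absorbed. So you need the variable-coefficient weights (in the spirit of the $(1+\dot S_{j-1}G(n))$ weight already present in Proposition~\ref{HfEuler}), not constant ones; once this is corrected, the rest of your outline — including the treatment of $\kappa v^q$ and the decay argument — lines up with the paper.

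One minor structural remark: the paper first rescales $(u,v)(t,x)=(\sqrt{\alpha}\,\tilde u,\sqrt{\beta}\,\tilde v)(\lambda t,\lambda x/\sqrt{\alpha\beta})$ so as to reduce to $\alpha=\beta=\lambda=1$; keeping $\alpha,\beta$ in the functional as you do is harmless, but normalizing them away simplifies bookkeeping and lets $z=v+\partial_xu$ (without the factor $\beta$).
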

\begin{Rmq} If $V^1,$ $V^2,$ $W^1$ and $W^2$ are `general' smooth functions, then it is unlikely  that a $L^p$ theory 
may be worked out.  We need a very specific structure of the nonlinear terms
in order  that the $L^p$ estimates of the low frequencies fit with the $L^2$ regularity of the high frequencies.\end{Rmq}
\begin{Rmq}
We do not know how to handle  terms like $u\partial_xu$ in any equations of the system (this is the reason why we assumed that $V^1$ and $V^2$ only depend on $v$).  In fact, although the system is locally well-posed if $V^1$ and $V^2$ also depend on $u,$ 
the time integrability of $u$ is not good enough for  global estimates. 
\end{Rmq}
\subsubsection*{Elements of proof}
We just explain how to find a Lyapunov function and to control the norm in $\dot\B^{-\sigma_1}_{2,\infty}$ of  a smooth solution 
$(u,v)$ of  \eqref{SystGen1D} on $[0,T],$ in terms of
the data. Proving existence and uniqueness 
is essentially the same as for the systems we treated before (uniqueness is  easier somehow
since we assumed $p=2$). Although the system under consideration is no longer 
symmetric if $\alpha+W^1\not=\beta+V^2$, it is symmetrizable (see  \cite[Chap. 10]{Benzoni-Serre}).
\smallbreak
Note that performing a suitable rescaling  reduces
our problem to the case 
\begin{equation}\label{eq:abl} \alpha=\beta=\lambda=1.\end{equation}  Indeed, if we set
$$(u,v)(t,x) =(\sqrt\alpha\,\tilde u,\sqrt\beta\,\tilde v)\Bigl(\lambda t,\frac{\lambda}{\sqrt{\alpha\beta}}\,x\Bigr),$$
then $(u,v)$ satisfies \eqref{SystGen1D} if and only if $(\tilde u,\tilde v)$ satisfies
a similar system with  \eqref{eq:abl},  parameter $\kappa \beta^{\frac{q-1}2}$  and slightly modified functions $V_1,$ $V_2,$ $W_1$ 
and $W_2$ (the modification depending only on $\alpha$ and $\beta$).  
So we will  assume \eqref{eq:abl} in the rest of this section.
\subsubsection*{A priori estimates}
We adapt  the method we used for $(TM)$ \emph{in the case $p=2$}.
 The terms $V^1\d_xu$ and $W^2\d_xv$  are  a slight generalization of $v\d_xu$ and $v\d_xv$ 
 and may be treated similarly. To handle  $W^1\d_xv$ and  $V^2\d_xu,$ we need to introduce suitable weights 
 in the definition  of the Lyapunov.   Finally,  $v^q$  may be seen as a harmless nonlinear source term. 
\smallbreak 
Let us start the computations : we assume that we are given a smooth function $(u,v)$  of \eqref{SystGen1D} on  some time interval 
$[0,T]$  such that for some suitably small $\eta>0,$
\begin{equation}\label{eq:smalluv} \sup_{t\in[0,T]}\|(u,v)(t)\|_{\dot\B^{\frac12}_{2,1}} \leq \eta,\end{equation} 
and, still denoting $u_j=\ddj u$ and $v_j=\ddj v,$  we set for all $j\in\Z,$  
$$
\cL_j\triangleq 
\biggl(\norme{(u_j,v_j)}^2_{L^2} +\int_{\mathbb{R}}v_j\partial_xu_j +\int_\mathbb{R}(1+V^2)(\partial_xu_j)^2+\int_\R(1+W^1)(\partial_xv_j)^2\biggr)^{1/2}\cdotp$$
We shall use repeatedly that \eqref{eq:smalluv} implies that 
\begin{equation}\label{eq:small}
\sup_{t\in[0,T]}\max\bigl(\|u(t)\|_{L^\infty},\,\|v(t)\|_{L^\infty},\,\|V^1(t)\|_{L^\infty},\,\|V^2(t)\|_{L^\infty},\,\|W^1(t)\|_{L^\infty},\,\|W^2(t)\|_{L^\infty}\bigr)\ll1,
\end{equation}
which  in particular entails that 
\begin{equation}\label{eq:equivLLj}
\cL_j\simeq\|(u_j,v_j,\d_xu_j,\d_xv_j)\|_{L^2}.
\end{equation}
Now,  applying $\ddj$ to  \eqref{SystGen1D}-\eqref{eq:abl} yields for all $j\in\Z,$ 
\begin{equation}\label{eq:ujvj}
 \left\{ \begin{aligned}&\partial_tu_j +(1+W^1)\partial_xv_j+V^1\partial_xu_j=R_j^1,\\ 
 &\partial_tv_j+(1+V^2)\partial_xu_j+W^2\partial_xv_j+v_j=R_j^2-\kappa\ddj(v^q) 
\end{aligned}\right. \end{equation}
 with
 $$ R_j^1\triangleq[V^1,\ddj]\d_xu+[W^1,\ddj]\d_xv\andf R_j^2\triangleq[V^2,\ddj]\d_xu+[W^2,\ddj]\d_xv.$$
  In order to compute the time derivative of $\cL_j^2,$ we need the following obvious identities:
  $$\displaylines{  \frac12\frac d{dt}\|(u_j,v_j)\|_{L^2}^2+\|v_j\|_{L^2}^2-\frac12\int_\R\bigl((u_j)^2\d_xV^1+(v_j)^2\d_xW^2\bigr)
  +\int_\R\bigl(W^1 u_j\d_xv_j+V^2v_j\d_xu_j\bigr) \hfill\cr\hfill=\int_\R \bigl(R_j^1 u_j  + R_j^2 v_j-\kappa\bigl(\ddj v^q\bigr)v_j\bigr),}$$
  $$\displaylines{\frac d{dt}\int_\R v_j\d_xu_j +\|\d_xu_j\|_{L^2}^2-\|\d_xv_j\|_{L^2}^2 +\int_\R v_j\d_xu_j\hfill\cr\hfill
  +\int_\R\bigl((W^2-V^1)\d_xu_j\,\d_xv_j +V^2(\d_xu_j)^2-W^1(\d_xv_j)^2\bigr)
  =\int_\R\bigl(R_j^2\d_xu_j-R_j^1\d_xv_j-\kappa\bigl(\ddj v^q\bigr)\d_xu_j\bigr)\cdotp}$$
  $$\displaylines{\frac12\frac d{dt}\int_\mathbb{R}(1+V^2)(\partial_xu_j)^2+\int_\R(1+V^2)\d_xu_j\d_x(V^1\d_xu_j)
  +\int_\R(1+V^2)\d_xu_j\d_x((1+W^1)\d_xv_j)\hfill\cr\hfill
  =\int_\R(1+V^2)\d_xu_j\d_xR_j^1 +\frac12\int_\R \d_tV^2(\d_xu_j)^2.}$$
    $$\displaylines{\frac12\frac d{dt}\int_\mathbb{R}(1+W^1)(\partial_xv_j)^2+\int_\R(1+W^1)\d_xv_j\d_x(W^2\d_xv_j)
  +\int_\R(1+W^1)\d_xv_j\d_x((1+V^2)\d_xu_j)\hfill\cr\hfill
+\int_\R(1+W^1)(\d_xv_j)^2  =\int_\R(1+W^1)\d_xv_j\bigl(\d_xR_j^2-\kappa\d_x\ddj v^q\bigr) +\frac12\int_\R \d_tW^1(\d_xv_j)^2.}$$
The fundamental observation that justifies our using those very  weights in the definition of $\cL_j$ is that 
the third integrals in the last  two relations  compensate. Consequently, 
denoting $$\cH_j^2\triangleq 
\|v_j\|_{L^2}^2+\frac12\int_\R v_j\d_x u_j+\frac12\|\d_xu_j\|_{L^2}^2 +\int_\R \Bigl(W^1+\frac12\Bigr)(\d_xv_j)^2,$$
and using the fact that
$$\int_\R V^2 v_j\d_xu_j = -\int_\R V^2 u_j\d_x v_j-\int_\R u_j v_j \d_xV^2,$$
we arrive at
$$\displaylines{\frac12\frac d{dt}\cL_j^2+\cH_j^2 = \frac12\int((v_j)^2\d_xW^2+(u_j)^2\d_xV^1)
+\int_\R u_j v_j \d_xV^2+\int_\R(V^2-W^1)u_j\d_xv_j
\hfill\cr\hfill+\frac12\int_\R\bigl((V^1-W^2)\d_xu_j\,\d_xv_j -V^2(\d_xu_j)^2+W^1(\d_xv_j)^2\bigr)
+\frac12\int_\R(\d_xu_j)^2\bigl( V^1\d_xV^2-(1+V^2)\d_xV^1\bigr)
\hfill\cr\hfill+\frac12\int_\R(\d_xv_j)^2\bigl(W^2\d_xW^1-(1+W^1)\d_xW^2\bigr)
+\frac12\int_\R\bigl((\d_xu_j)^2\d_tV^2+(\d_xv_j)^2\d_tW^1\bigr)
\hfill\cr\hfill+\int_\R\bigl(u_j-\frac12\d_xv_j\bigr)R_j^1+\int_\R\bigl(v_j+\frac12\d_xu_j\bigr)R_j^2
+\int_\R\bigl((1+V^2)\d_xu_j\d_xR_j^1
+(1+W^1)\d_xv_j\d_xR_j^2\bigr)\hfill\cr\hfill
-\kappa\int_\R\Bigl(\bigl(v_j+\frac12\d_xu_j\bigr)\ddj v^q+(1+W^1)\d_x\ddj v^q\d_xv_j\Bigr)\cdotp}$$
Since 
$$\d_t V^2=-(V^2)'\bigl((1+V^2)\d_xu+W^2\d_xv+v+\kappa v^q\bigr),$$ remembering \eqref{eq:small}, we have
$$
\|\d_t V^2\|_{L^\infty} \lesssim \|\d_xu\|_{L^\infty} +\|(u,v)\|_{L^\infty} \|\d_x v\|_{L^\infty} +\|v\|_{L^\infty}$$
and, similarly, 
$$\|\d_tW^1\|_{L^\infty} \lesssim \|\d_xu\|_{L^\infty} + \|\d_x v\|_{L^\infty} +\|v\|_{L^\infty}.$$
Observe also that 
$$\|\d_xV^i\|_{L^\infty}\lesssim \|\d_xv\|_{L^\infty}\andf
\|\d_xW^i\|_{L^\infty}\lesssim\|(\d_xu,\d_xv)\|_{L^\infty}\quad\hbox{for }\ i=1,2,$$
whence, in particular
$$\int_\R u_j v_j \d_xV^2 \lesssim \|\d_xv\|_{L^\infty}\|u_j\|_{L^2}\|v_j\|_{L^2}.$$
Therefore,
$$\displaylines{
\frac12\frac d{dt}\cL_j^2+\cH_j^2\lesssim \|(u,v)\|_{L^\infty}\|\d_xv_j\|_{L^2}\|(u_j,\d_xu_j,\d_xv_j)\|_{L^2}
+\|v\|_{L^\infty}\|\d_xu_j\|^2_{L^2}\hfill\cr\hfill+\|\d_xu\|_{L^\infty}\|(v_j,\d_xu_j,\d_xv_j)\|_{L^2}^2
+\|\d_xv\|_{L^\infty} \|(u_j,v_j,\d_xu_j,\d_xv_j)\|_{L^2}^2\hfill\cr\hfill
+ \|(R_j^1,R_j^2)\|_{L^2} \|(u_j,v_j,\d_xu_j,\d_xv_j)\|_{L^2}
+  \|(\d_xR_j^1,\d_xR_j^2)\|_{L^2} \|(\d_xu_j,\d_xv_j)\|_{L^2}\hfill\cr\hfill
+\|(v_j,\d_xu_j)\|_{L^2}\|\ddj v^q\|_{L^2}+\|\d_xv_j\|_{L^2}\|\d_x\ddj v^q\|_{L^2}.}
$$
Then, remembering  \eqref{eq:equivLLj} and  using lemma \ref{SimpliCarre}, we discover that for all $j\in\Z$ 
and $t\in[0,T],$
\begin{multline}\label{eq:Ljjj}
\cL_j(t) +c\min(1,2^{2j})\int_0^t\cL_j\leq \cL_j(0)
\\+C\int_0^t \bigl(\|\d_x v\|_{L^\infty}\cL_j+\|\d_xu\|_{L^\infty}\|v_j\|_{L^2}
+ \|(v,\d_x u)\|_{L^\infty}\|\d_xu_j\|_{L^2}+\|(u,v,\d_xu)\|_{L^\infty}\|\d_xv_j\|_{L^2}\bigr)\\
 + \int_0^t\|(\ddj v^q,\d_x\ddj v^q)\|_{L^2}
+C\int_0^t\|(R_j^1,R_j^2,\d_xR_j^1,\d_xR_j^2)\|_{L^2}.
\end{multline}
To bound the commutator terms, let us use  \eqref{eq:com1} that yields 
$$\|R_j^1\|_{L^2}\lesssim c_j 2^{-\frac j2}\bigl(\|\d_xV^1\|_{\dot\B^{\frac12}_{2,1}}\|u\|_{\dot\B^{\frac12}_{2,1}}
+ \|\d_xW^1\|_{\dot\B^{\frac12}_{2,1}}\|v\|_{\dot\B^{\frac12}_{2,1}}\bigr)\ \hbox{ for all }\ j\in\Z.$$
Clearly, since $v$ is small in $\dot\B^{\frac12}_{2,1},$ $V^1=V^1(v)$   and $V^1(0)=0,$ Proposition \ref{Composition} entails that
\begin{equation}\label{eq:dxV1}\|\d_xV^1\|_{\dot\B^{\frac12}_{2,1}}\lesssim\|\d_xv\|_{\dot\B^{\frac12}_{2,1}}.\end{equation}
In order to bound the term with $W^1,$ we use the fact that
there exist two smooth functions $G=G(u,v)$ and $H=H(u,v)$ vanishing at $(0,0)$ and such that
$$
\d_xW^1=\d_uW^1(0,0)\d_xu + \d_vW^1(0,0)\d_x v + G(u,v)\d_xu+H(u,v)\d_xv.$$
Consequently, using the stability of the space $\dot\B^{\frac12}_{2,1}$ by product
and results in \cite[Section 5.5]{RS} for bounding $G(u,v)$ and $H(u,v),$ we get
\begin{equation}\label{eq:dxW1}\|\d_xW^1\|_{\dot\B^{\frac12}_{2,1}} \lesssim \|(\d_xu,\d_xv)\|_{\dot\B^{\frac12}_{2,1}}
\bigl(1+\|(u,v)\|_{\dot\B^{\frac12}_{2,1}}\bigr)\cdotp\end{equation}
So finally, remembering \eqref{eq:smalluv}, we have
\begin{equation}\label{eq:Rj1}
\|R_j^1\|_{L^2}\lesssim c_j 2^{-\frac j2}\bigl(\|u\|_{\dot\B^{\frac12}_{2,1}}\|\d_xv\|_{\dot\B^{\frac12}_{2,1}}
+\|v\|_{\dot\B^{\frac12}_{2,1}} \|(\d_xu,\d_xv)\|_{\dot\B^{\frac12}_{2,1}}\bigr)\cdotp
\end{equation}
Bounding $R_j^2$ works exactly the same. 
Next, in light of \eqref{eq:com2}, we have
$$\|\d_xR_j^1\|_{L^2}\lesssim c_j 2^{-\frac j2}\bigl(\|\d_xV^1\|_{\dot\B^{\frac12}_{2,1}}\|\d_xu\|_{\dot\B^{\frac12}_{2,1}}+ \|\d_xW^1\|_{\dot\B^{\frac12}_{2,1}}\|\d_xv\|_{\dot\B^{\frac12}_{2,1}}\bigr),$$
and a similar inequality for $\d_xR_j^2.$ Hence repeating the above arguments for bounding
$\d_xV^1,$ $\d_xV^2,$ $\d_xW^1$ and $\d_xW^2,$ we end up with 
\begin{equation}\label{eq:dxRj1}
\|\d_xR_j^i\|_{L^2}\lesssim c_j 2^{-\frac j2}\|\d_xv\|_{\dot\B^{\frac12}_{2,1}}
 \|(\d_xu,\d_xv)\|_{\dot\B^{\frac12}_{2,1}},\quad i=1,2.
\end{equation}
Now, reverting to  \eqref{eq:Ljjj}, using the embedding $\dot\B^{\frac12}_{2,1}\hookrightarrow L^\infty$
and that 
\begin{equation}\label{eq:equivH}
\cH_j\simeq \|(v_j,\d_xu_j,\d_xv_j)\|_{L^2}
\end{equation}
 we get,  denoting 
$$\cL\triangleq \sum_{j\in\Z} 2^{\frac j2}\cL_j,$$
two  positive constants $c$ and $C$ such that   
\begin{multline}\label{eq:v1}
\cL(t) +c\sum_j \min(1,2^{2j}) 2^{\frac j2} \int_0^t \cL_j
\leq \cL(0) +C\int_0^t \|\d_xv\|_{\dot\B^{\frac12}_{2,1}}\cL\\ +C\int_0^t\|v\|_{\dot\B^{\frac12}_{2,1}}
\|\d_xu\|_{\dot\B^{\frac12}_{2,1}} +C\int_0^t\|\d_xu\|_{\dot\B^{\frac12}_{2,1}}^2+C\int_0^t\|(v^q,\d_xv^q)\|_{\dot\B^{\frac12}_{2,1}}.\end{multline}
As for $(TM)$, we need better properties of integrability for $v$ in order to close 
the above estimate. The situation  is a bit  more  complex  since the second line  above was not present. 
Nevertheless, it  is still  possible  to exhibit a control of $z\triangleq v+\d_xu$ in $L^1(\R_+;\dot\B^{\frac12}_{2,1})$ 
(which, as we saw in \eqref{eq:vL2} yields a bound for  $v$ in $L^2(\R_+;\dot\B^{\frac12}_{2,1})$).
Indeed, we have 
\begin{equation}\label{eq:dtz}
\d_tz+ z+V^1\d_xz=(V^1-W^2)\d_xv-V^2\d_xu-\d^2_{xx}v-\d_xV^1\,\d_xu-\d_x(W^1\d_xv)-\kappa v^q\end{equation}
which, as in the proof of \eqref{eq:zzz} leads to
$$\displaylines{\|z(t)\|^\ell_{\dot\B^{\frac12}_{2,1}}+\int_0^t\|z\|^\ell_{\dot\B^{\frac12}_{2,1}}\leq \|z_0\|^\ell_{\dot\B^{\frac12}_{2,1}}
+\int_0^t\|\d^2_{xx}v\|_{\dot\B^{\frac12}_{2,1}}^\ell+C\int_0^t\|\d_xV^1\|_{\dot\B^{\frac12}_{2,1}}\|z\|_{\dot\B^{\frac12}_{2,1}}
\hfill\cr\hfill+\int_0^t\|(V^1-W^2)\d_xv+V^2\d_xu+\d_xV^1\,\d_xu\|_{\dot\B^{\frac12}_{2,1}}^\ell
+\int_0^t\|\d_x(W^1\d_xv)\|_{\dot\B^{\frac12}_{2,1}}^\ell+\kappa\int_0^t\|v^q\|_{\dot\B^{\frac12}_{2,1}}^\ell.}$$
Using product and composition estimates and remembering \eqref{eq:smalluv}, we get 
$$\begin{aligned}
\|(V^1-W^2)\d_xv\|_{\dot\B^{\frac12}_{2,1}}&\lesssim\|(u,v)\|_{\dot\B^{\frac12}_{2,1}}\|\d_xv\|_{\dot\B^{\frac12}_{2,1}},\\
\|V^2\d_xu\|_{\dot\B^{\frac12}_{2,1}}&\lesssim\|v\|_{\dot\B^{\frac12}_{2,1}}\|\d_xu\|_{\dot\B^{\frac12}_{2,1}},\\
\|\d_xV^1\,\d_xu\|_{\dot\B^{\frac12}_{2,1}}&\lesssim\|\d_xv\|_{\dot\B^{\frac12}_{2,1}}\|\d_xu\|_{\dot\B^{\frac12}_{2,1}}.
\end{aligned}$$
Since only low frequencies are involved, we have
$$
\|\d_x(W^1\d_xv)\|_{\dot\B^{\frac12}_{2,1}}^\ell\lesssim \|W^1\d_xv\|_{\dot\B^{\frac12}_{2,1}}
\lesssim\|(u,v)\|_{\dot\B^{\frac12}_{2,1}}\|\d_xv\|_{\dot\B^{\frac12}_{2,1}}.
$$
Hence, using also \eqref{eq:dxV1}, we get 
\begin{multline}\label{eq:v2}
\|z(t)\|_{\dot\B^{\frac12}_{2,1}}+\int_0^t\|z\|_{\dot\B^{\frac12}_{2,1}}\leq \|z_0\|_{\dot\B^{\frac12}_{2,1}}+\int_0^t\|\d^2_{xx}v\|_{\dot\B^{\frac12}_{2,1}}^\ell\\
+C\int_0^t\|\d_xv\|_{\dot\B^{\frac12}_{2,1}}\|(u,v,z,\d_xu)\|_{\dot\B^{\frac12}_{2,1}}
+C\int_0^t \|v\|_{\dot\B^{\frac12}_{2,1}}\|\d_xu\|_{\dot\B^{\frac12}_{2,1}}+\int_0^t\|v^q\|_{\dot\B^{\frac12}_{2,1}}.
\end{multline}
In order to close the estimates for the solution, it suffices to add up \eqref{eq:v1} to 
$\varepsilon\cdot$\eqref{eq:v2}  with suitably small $\varepsilon.$  More precisely,  setting 
$$\wt\cL\triangleq \cL +\varepsilon \|z\|_{\dot\B^{\frac12}_{2,1}}^\ell\andf 
\wt\cH\triangleq c\sum_{j\in\Z}\min(1,2^{2j})2^{\frac j2}\cL_j +\varepsilon  \|z\|_{\dot\B^{\frac12}_{2,1}}^\ell,$$
we get for all $t\in[0,T]$ if $\varepsilon$ has been chosen small enough,  
\begin{multline}\label{eq:wtcL}\wt\cL(t) +\frac12\int_0^t\wt\cH\leq\wt\cL(0)+C\int_0^t
\bigl(\|\d_xv\|_{\dot\B^{\frac12}_{2,1}}\wt\cL+ \|\d_xu\|_{\dot\B^{\frac12}_{2,1}}^2
+ \|v\|_{\dot\B^{\frac12}_{2,1}}\|\d_xu\|_{\dot\B^{\frac12}_{2,1}}\bigr)\\+C\int_0^t\|(v^q,\d_xv^q)\|_{\dot\B^{\frac32}_{2,1}}.\end{multline}
Let us emphasize that 
$$
\wt\cL\simeq \|(u,v,\d_xu,\d_xv)\|_{\dot\B^{\frac12}_{2,1}} \andf
\wt\cH\simeq  \|u\|_{\dot\B^{\frac52}_{2,1}}^\ell + \|u\|_{\dot\B^{\frac32}_{2,1}}^h + \|v+\d_xu\|^\ell_{\dot\B^{\frac12}_{2,1}}
+\|v\|_{\dot\B^{\frac32}_{2,1}}^h.$$
Hence in particular,  we have $\|\d_xv\|_{\dot\B^{\frac12}_{2,1}}\lesssim\wt\cH$ and, as explained
in the previous section (just replace $n$ by $u$ and $V$ by $v$),   
\begin{equation}\label{eq:justabove}  \|v\|_{\dot\B^{\frac12}_{2,1}}\|\d_xu\|_{\dot\B^{\frac12}_{2,1}}\lesssim \wt\cL\wt\cH.\end{equation}
Furthermore, 
$$\begin{aligned} \|\d_xu\|_{\dot\B^{\frac12}_{2,1}}^2&\lesssim
 (\|\d_xu\|^\ell_{\dot\B^{\frac12}_{2,1}})^2 + 
 (\|\d_xu\|_{\dot\B^{\frac12}_{2,1}}^h)^2\\
 &\lesssim  \|u\|_{\dot\B^{\frac12}_{2,1}}^\ell \|u\|_{\dot\B^{\frac52}_{2,1}}^\ell
 + (\|u\|_{\dot\B^{\frac32}_{2,1}}^h)^2 \lesssim\wt\cL\wt\cH.\end{aligned} $$
Finally,   Lemma \ref{LP} and \eqref{eq:justabove}  together  ensure that 
$$\begin{aligned} \norme{v^q}_{\dot{\mathbb{B}}^{\frac{1}{2}}_{2,1}}&\lesssim  \norme{v}^q_{\dot{\mathbb{B}}^{\frac{1}{2}}_{2,1}}\\
&\lesssim \|v\|_{\dot{\mathbb{B}}^{\frac{1}{2}}_{2,1}}^{q-1}\bigl(\|z\|_{\dot\B^{\frac12}_{2,1}}+\|\d_xu\|_{\dot\B^{\frac12}_{2,1}}\bigr)\\
&\lesssim \wt\cL^{q-1}\wt\cH +\wt\cL^{q-2}\|v\|_{\dot\B^{\frac12}_{2,1}}\|\d_xu\|_{\dot\B^{\frac12}_{2,1}}\\
&\lesssim   \wt\cL^{q-1}\wt\cH
\end{aligned}
$$
and 
$$\norme{\d_xv^q}_{\dot{\mathbb{B}}^{\frac{1}{2}}_{2,1}}\lesssim
    \norme{\d_xv}_{\dot{\mathbb{B}}^{\frac{1}{2}}_{2,1}}  \norme{v}^{q-1}_{\dot{\mathbb{B}}^{\frac{1}{2}}_{2,1}}\lesssim   \wt\cL^{q-1}\wt\cH.
 $$ 
Consequently, Inequality \eqref{eq:wtcL} reduces to 
$$\wt\cL(t) +\frac12\int_0^t\wt\cH\leq\wt\cL(0)+ C\int_0^t(\wt\cL+\wt\cL^{q-1})\wt\cH.$$
Now, applying a bootstrap argument, one may conclude that 
there exists a small constant $\eta$ such that if $\wt\cL(0)\leq\eta,$ then
\begin{equation}\label{eq:final}
\forall t\in[0,T],\; \wt\cL(t) +\frac14\int_0^t\wt\cH\leq\wt\cL(0).\end{equation}
This gives the desired control on the norm of the solution and, in addition, that $\wt\cL$ is a Lyapunov functional. 

\subsubsection*{Decay estimates} Granted with a Lyapunov functional that has the same properties 
as in the previous sections, in order to get the whole family of decay estimates, 
 it suffices to establish a uniform in time  bound in  $\dot\B^{-\sigma_1}_{2,\infty}$ for the solution.  
 The starting point is  that, for all $j\in\Z,$ 
$$\left\{
\begin{aligned}&\d_tu_j+\d_xv_j+V^1\d_xu_j=[V^1,\ddj]\d_x-\ddj(W^1\d_xv),\\
&\d_tv_j+\d_xu_j+v_j=-\ddj(W^2\d_xv)-\ddj(V^2\d_xu)-\kappa \ddj v^q.\end{aligned}\right.$$
Applying an energy method, using Lemma \ref{SimpliCarre} and  Inequality \eqref{eq:com3} eventually delivers: 
$$\displaylines{\|(u,v)(t)\|_{\dot\B^{-\sigma_1}_{2,\infty}} \leq \|(u_0,v_0)\|_{\dot\B^{-\sigma_1}_{2,\infty}}
+C\int_0^t \|\d_xV^1\|_{\dot\B^{\frac12}_{2,1}}\|(u,v)\|_{\dot\B^{-\sigma_1}_{2,\infty}}
 \hfill\cr\hfill + \int_0^t\bigl(\|W^1\d_xv\|_{\dot\B^{-\sigma_1}_{2,\infty}}+  \|W^2\d_xv\|_{\dot\B^{-\sigma_1}_{2,\infty}}
+ \|V^2\d_xu\|_{\dot\B^{-\sigma_1}_{2,\infty}} + \kappa \|v^q \|_{\dot\B^{-\sigma_1}_{2,\infty}}\bigr)\cdotp}$$ 
Using for $i=1,2,$  the decomposition 
$$W^i(u,v)=\bigl(\d_uW^i(0,0) + G^i(u,v)\bigr) u + \bigl(\d_vW^i(0,0) + H^i(u,v)\bigr) v$$
where $G^i$ and $H^i$ are smooth functions vanishing at $(0,0),$ we get thanks to results in \cite[Section 5.5]{RS}
and  Proposition \ref{LP} 
$$  \|W^i\d_xv\|_{\dot\B^{-\sigma_1}_{2,\infty}}\lesssim \|(u,v)\|_{\dot\B^{-\sigma_1}_{2,\infty}} \|\d_xv\|_{\dot\B^{\frac12}_{2,1}}.$$
Proposition \ref{LP} also implies that 
$$ \|v^q \|_{\dot\B^{-\sigma_1}_{2,\infty}}\lesssim  \|v^2 \|_{\dot\B^{-\sigma_1}_{2,\infty}}\|v\|_{\dot\B^{\frac12}_{2,1}}^{q-2}.$$
In order to estimate the term with $v^2,$ we use that $v=z-\d_xu$ and get the decomposition:
$$v^2=  v^h (v+v^\ell)  + z^\ell (v^\ell-\d_xu^\ell) +(\d_xu^\ell)^2.$$
By Proposition \ref{LP} and interpolation, we thus have
$$ \begin{aligned} \|v^2 \|_{\dot\B^{-\sigma_1}_{2,\infty}}&\lesssim \|v^h\|_{\dot\B^{\frac12}_{2,1}} \|v+v^\ell\|_{\dot\B^{-\sigma_1}_{2,\infty}} 
+ \bigl(\|v^\ell \|_{\dot\B^{-\sigma_1}_{2,\infty}} +\|\d_xu^\ell \|_{\dot\B^{-\sigma_1}_{2,\infty}}\bigr)\|z^\ell\|_{\dot\B^{\frac12}_{2,1}}
+ \|\d_xu^\ell\|_{\dot\B^{\frac12(\frac12-\sigma_1)}_{2,1}}^2\\
&\lesssim\|v\|^h_{\dot\B^{\frac32}_{2,1}} \|v\|_{\dot\B^{-\sigma_1}_{2,\infty}} 
+ \|(u,v) \|_{\dot\B^{-\sigma_1}_{2,\infty}} \|z\|^\ell_{\dot\B^{\frac12}_{2,1}}
+ \|u\|^\ell_{\dot\B^{-\sigma_1}_{2,\infty}} \|u\|^\ell_{\dot\B^{\frac52}_{2,1}}.\end{aligned}
$$
Hence we have 
$$ \|v^q \|_{\dot\B^{-\sigma_1}_{2,\infty}}\lesssim \|v\|_{\dot\B^{\frac12}_{2,1}}^{q-2} \, \|(u,v) \|_{\dot\B^{-\sigma_1}_{2,\infty}}\,\wt\cH.$$
Finally, using the decomposition 
$$V^2(v)\d_xu= V^2(v)\d_x u^h + V^2(z)\d_x u^\ell - \biggl(\int_0^1 V^2(z-\tau\d_xu)\biggr) \Bigl(\d_xu^\ell\d_x u^\ell
+\d_xu^\ell\d_x u^h\Bigr),$$
we get by similar computations that 
$$ \|V^2\d_xu\|_{\dot\B^{-\sigma_1}_{2,\infty}} \lesssim  \|(u,v) \|_{\dot\B^{-\sigma_1}_{2,\infty}}\,\wt\cH.$$
Therefore, in the end, we get 
$$\|(u,v)(t)\|_{\dot\B^{-\sigma_1}_{2,\infty}} \leq \|(u_0,v_0)\|_{\dot\B^{-\sigma_1}_{2,\infty}}
+C\int_0^t \wt\cH \bigl(1+   \|v\|_{\dot\B^{\frac12}_{2,1}}^{q-2}\bigr)  \|(u,v) \|_{\dot\B^{-\sigma_1}_{2,\infty}},$$
which, combined with \eqref{eq:final} and Gronwall lemma implies that 
$$\sup_{t\in[0,T]} \|(u,v)(t)\|_{\dot\B^{-\sigma_1}_{2,\infty}} \lesssim  \|(u_0,v_0)\|_{\dot\B^{-\sigma_1}_{2,\infty}}.$$
At this stage, completing the proof of decay estimates is left to the reader. \qed



\appendix
\section{}

Here we gather a few technical results that have been used repeatedly in the paper. 
The first one is a rather standard lemma pertaining 
to some differential inequality.
\begin{Lemme}\label{SimpliCarre}
Let  $p\geq 1$ and $X : [0,T]\to \mathbb{R}^+$ be a continuous function such that $X^p$ is differentiable
almost everywhere. We assume that there exists 
 a constant $B\geq 0$ and  a measurable function $A : [0,T]\to \mathbb{R}^+$ 
such that 
 $$\frac{1}{p}\frac{d}{dt}X^p+BX^p\leq AX^{p-1}\quad\hbox{a.e.  on }\ [0,T].$$ 
 Then, for all $t\in[0,T],$ we have
$$X(t)+B\int_0^tX\leq X_0+\int_0^tA.$$
\end{Lemme}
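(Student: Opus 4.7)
The core difficulty is that one cannot immediately divide the differential inequality by $X^{p-1}$, since $X$ may vanish. My plan is therefore to regularize: introduce $X_\varepsilon\triangleq (X^p+\varepsilon^p)^{1/p}$ for $\varepsilon>0$, derive a clean linear differential inequality for $X_\varepsilon$, integrate it, and then send $\varepsilon\to 0$.

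For the first step, since $X_\varepsilon\geq \varepsilon>0$, the function $X_\varepsilon^p=X^p+\varepsilon^p$ is differentiable a.e. with the same derivative as $X^p$, and the chain rule gives
$$\frac{d}{dt}X_\varepsilon = X_\varepsilon^{1-p}\cdot\frac{1}{p}\frac{d}{dt}X^p \leq X_\varepsilon^{1-p}\bigl(AX^{p-1}-BX^p\bigr)\quad\hbox{a.e. on }[0,T].$$
Two elementary inequalities clean up the right-hand side: because $X\leq X_\varepsilon$ and $p-1\geq 0$, one has $X_\varepsilon^{1-p}X^{p-1}\leq 1$, so the first term is at most $A$; and writing $X^p=X_\varepsilon^p-\varepsilon^p$ together with $X_\varepsilon\geq \varepsilon$ and $1-p\leq 0$ yields $\varepsilon^p X_\varepsilon^{1-p}\leq \varepsilon$, hence $X_\varepsilon^{1-p}X^p\geq X_\varepsilon-\varepsilon$. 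Combining these,
$$\frac{d}{dt}X_\varepsilon + BX_\varepsilon\leq A + B\varepsilon\quad\hbox{a.e. on }[0,T].$$

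The second step is to integrate this inequality over $[0,t]$ (using that $X_\varepsilon$ inherits absolute continuity from $X^p$), which produces $X_\varepsilon(t)+B\int_0^t X_\varepsilon\leq X_\varepsilon(0)+\int_0^t A + B\varepsilon t$. The third and last step is to let $\varepsilon\to 0$: pointwise $X_\varepsilon\to X$ and, since $(a^p+b^p)^{1/p}\leq a+b$ for $a,b\geq 0$ and $p\geq 1$, one has $X_\varepsilon\leq X+\varepsilon$, so the continuity of $X$ on the compact interval $[0,T]$ furnishes a dominating function, and dominated convergence delivers the claimed inequality.

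The main obstacle is the first step: one needs a regularization that simultaneously keeps $X_\varepsilon$ bounded below by a positive constant (so that $X_\varepsilon^{1-p}$ makes sense and the chain rule applies) while losing only an $O(\varepsilon)$ error in converting the nonlinear bound $BX^p$ into the linear bound $BX_\varepsilon$. The specific choice $X_\varepsilon=(X^p+\varepsilon^p)^{1/p}$ works precisely because the identity $X^p=X_\varepsilon^p-\varepsilon^p$ turns the troublesome $X_\varepsilon^{1-p}X^p$ into $X_\varepsilon$ minus the harmless term $\varepsilon^p X_\varepsilon^{1-p}\leq\varepsilon$. Once this algebraic feature is identified, the remainder of the argument is routine.
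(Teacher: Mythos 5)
Your proof is correct and follows essentially the same route as the paper: the same regularization $X_\varepsilon=(X^p+\varepsilon^p)^{1/p}$, the same two elementary bounds (controlling $X_\varepsilon^{1-p}X^{p-1}$ by $1$ and $\varepsilon^p X_\varepsilon^{1-p}$ by $\varepsilon$), and the same integration-then-limit conclusion. The only cosmetic differences are that the paper first manipulates the inequality at the level of $X_\varepsilon^p$ and then divides by $X_\varepsilon^{p-1}$, whereas you apply the chain rule immediately, and the paper sets aside the trivial case $p=1$ up front while your argument handles it uniformly; neither changes the substance.
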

\begin{Proof}
The case $p=1$ being obvious,  assume that $1<p<\infty$. 
Then, we set $X_\varepsilon\triangleq(X^p+\varepsilon^p)^{1/p}$ for $\varepsilon>0,$ and observe that
$$\frac{1}{p}\frac{d}{dt}X_\varepsilon^p+BX_\varepsilon^p\leq AX^{p-1}_\varepsilon+B\varepsilon^p\quad\hbox{a.e.  on }\ [0,T]. $$
Dividing both sides by  the positive function $X_\varepsilon^{p-1}$, we get
$$\frac{d}{dt}X_\varepsilon+BX_\varepsilon\leq A+B\varepsilon\left(\frac{\varepsilon}{X_\varepsilon}\right)^{p-1},$$
whence, as $\varepsilon/X_\varepsilon\leq 1$, 
$$\frac{d}{dt}X_\varepsilon+BX_\varepsilon\leq A+B\varepsilon. $$
Then,  integrating in time and taking the limit as $\varepsilon$ tends to $0$ yields the desired inequality. 
\end{Proof}
The following result from \cite{Da01} has been used in the proof of Proposition \ref{APLP}. 
\begin{Prop}\label{p:bernstein} If $\text{Supp}(\mathcal{F}f)\subset\{\xi\in\mathbb{R}^d : R_1\lambda\leq|\xi|\leq R_2\lambda\}$ for some $0<R_1<R_2$ then, there exists $c=c(d,R_1,R_2)>0$ such that for all $p\in [2,\infty[$, we have
$$
c\lambda^2\left(\frac{p-1}{p}\right)\int_{\mathbb{R}^d}|f|^p \leq (p-1)\int_{\mathbb{R}^d}|\nabla f|^2|f|^{p-2}
= -\int_{\mathbb{R}^d}f\Delta f|f|^{p-2}.
$$\end{Prop}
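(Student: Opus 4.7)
The right-hand equality follows from integration by parts: the chain rule applied to $\sigma(t)=t|t|^{p-2}$ yields $\nabla(f|f|^{p-2})=(p-1)|f|^{p-2}\nabla f$, and hence
$-\int f\Delta f\,|f|^{p-2}\,dx=\int\nabla f\cdot\nabla(f|f|^{p-2})\,dx=(p-1)\int|\nabla f|^2|f|^{p-2}\,dx$.
For complex-valued $f$ the same formula holds by reducing to $|f|^2=f\bar f$ and mimicking the real case.

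To establish the lower bound, I plan to exploit the spectral localization by introducing the auxiliary function $g:=(-\Delta)^{-1}f$. Since $\mathrm{Supp}(\hat f)\subset\{R_1\lambda\leq|\xi|\leq R_2\lambda\}$ is bounded away from the origin, this is well defined: choosing $\tilde\varphi\in C_c^\infty(\mathbb R^d)$ equal to $1$ on $\{R_1\leq|\xi|\leq R_2\}$ and supported in a slightly larger annulus bounded away from $0$, one writes $g=\lambda^{-2}\psi(D/\lambda)f$ with $\psi(\xi):=\tilde\varphi(\xi)/|\xi|^2\in C_c^\infty$. Since $\check\psi$ is Schwartz, Young's convolution inequality yields the Bernstein-type bounds $\|g\|_p\leq C\lambda^{-2}\|f\|_p$ and $\|\nabla g\|_p\leq C\lambda^{-1}\|f\|_p$ for every $p\in[1,\infty]$, with $C=C(d,R_1,R_2)$, and by construction $f=-\Delta g$.

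Using this and the identity from the first step with $g$ replacing one copy of $f$, one obtains $\|f\|_p^p=-\int\Delta g\cdot f|f|^{p-2}\,dx=(p-1)\int\nabla g\cdot\nabla f\,|f|^{p-2}\,dx$. Cauchy--Schwarz in the weighted measure $|f|^{p-2}\,dx$, combined with H\"older's inequality with exponents $p/2$ and $p/(p-2)$ applied to the first factor, gives $\int|\nabla g|^2|f|^{p-2}\leq\|\nabla g\|_p^2\|f\|_p^{p-2}\leq C^2\lambda^{-2}\|f\|_p^p$. Substituting, squaring and dividing by $\|f\|_p^p$ produces $\lambda^2\|f\|_p^p\leq C^2(p-1)^2\int|\nabla f|^2|f|^{p-2}\,dx$, which after division by $(p-1)$ and the trivial bound $(p-1)/p\leq 1$ yields an inequality of the desired form.

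The main subtlety (and the obstacle that prevents simply reading off the stated constant from the above) is to ensure that the final $c$ depends only on $d,R_1,R_2$ and is \emph{uniform} in $p\in[2,\infty)$, whereas the Cauchy--Schwarz route produces a factor of order $p/[(p-1)^2C^2]$ in the $(p-1)/p$-form, which is controlled near $p=2$ but degrades as $p\to\infty$. A uniform constant can be recovered by refining the argument through the heat semigroup: one estimates the rescaled kernel $\mathcal{F}^{-1}(e^{-s|\eta|^2}\tilde\varphi(\eta))$ in $L^1$ to obtain $\|e^{t\Delta}f\|_p\lesssim e^{-c\lambda^2 t}\|f\|_p$ for spectrally localized $f$, and then extracts the desired inequality from the identity $\frac{d}{dt}\|e^{t\Delta}f\|_p^p=-p(p-1)\int|\nabla e^{t\Delta}f|^2|e^{t\Delta}f|^{p-2}\,dx$ together with the continuity $\|e^{t\Delta}f\|_p\to\|f\|_p$ as $t\to 0^+$.
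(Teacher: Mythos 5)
The paper does not prove this proposition at all: it is imported verbatim from \cite{Da01}, so there is no ``paper's own proof'' to match against; I can only assess your argument on its merits.

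The integration-by-parts identity on the right is fine for real-valued $f$ (the complex case is in fact \emph{not} obtained by simply ``mimicking the real case'' — the weighted Dirichlet form changes — but the paper only uses real $f$, so this is a non-issue). Your argument via $g=(-\Delta)^{-1}f$ and weighted Cauchy--Schwarz is correct as far as it goes: after the Hölder step and squaring you do get
$\lambda^2\|f\|_p^p \le C^2(p-1)^2\int|\nabla f|^2|f|^{p-2}$,
with $C=C(d,R_1,R_2)$. You rightly observe that this only yields the stated inequality with a constant $c$ that degenerates like $p/(p-1)^2$, so the claimed $p$-uniformity is not obtained. (Incidentally, for the one place the paper actually invokes this result — Proposition \ref{APLP} with $p\in[2,4]$ — a $p$-dependent constant is perfectly adequate, so your first argument already covers the paper's needs even if it does not prove the proposition as stated.)

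The genuine gap is in the paragraph meant to repair this. You propose to first prove $\|e^{t\Delta}f\|_{L^p}\le C\,e^{-c\lambda^2 t}\|f\|_{L^p}$ by kernel estimates (standard and fine) and then ``extract the desired inequality from $\frac{d}{dt}\|e^{t\Delta}f\|_p^p=-p(p-1)\int|\nabla e^{t\Delta}f|^2|e^{t\Delta}f|^{p-2}$ together with continuity at $t=0^+$.'' This does not close. Setting $\phi(t)=\|e^{t\Delta}f\|_p^p$, the decay bound reads $\phi(t)\le C^p e^{-pc\lambda^2 t}\phi(0)$; since $C>1$ for $p\neq 2$, the right-hand side exceeds $\phi(0)$ for $t$ small, so this inequality carries \emph{no} information about $\phi'(0^+)$, and right-continuity of $\phi$ at $0$ does not help. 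To push the long-time decay into a derivative bound at $t=0$ one needs an additional structural fact — for instance convexity of $t\mapsto\log\phi(t)$, or monotonicity of $t\mapsto-\phi'(t)=p(p-1)\int|\nabla e^{t\Delta}f|^2|e^{t\Delta}f|^{p-2}$ along the heat flow — and neither is stated, proved, nor obvious for general $p$. As written, the second paragraph is a heuristic, not a proof, and the $p$-uniform constant $c(d,R_1,R_2)$ remains unestablished by your argument.
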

The proof of the following  inequality may be found in e.g. \cite[Chap. 2]{HJR}.
\begin{Lemme} \label{Commutateur1}
Let $1\leq p,q,r\leq\infty$ be such that $\frac{1}{p}+\frac{1}{q}=\frac{1}{r}\cdotp$ 
Let $a$ be a function with gradient  in $L^p$ and $b,$ a function in $L^q.$ 
There exists a constant $C$ such that 
$$\norme{[\dot{\Delta}_j,a]b}_{L^r}\leq C2^{-j}\norme{\nabla a}_{L^q}\norme{b}_{L^p}
\quad\hbox{for all }\ j\in\Z.$$
\end{Lemme}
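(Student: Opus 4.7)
The plan is to exploit the convolution representation of $\dot\Delta_j$ so that the commutator becomes a genuine finite-difference operator acting on $a$, which immediately gives the gain of one derivative and the factor $2^{-j}$.

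First I would write $\dot\Delta_j f = h_j \ast f$, where $h_j(z) \triangleq 2^{jd} h(2^j z)$ and $h \triangleq \mathcal{F}^{-1}\varphi$. Since $h$ is a Schwartz function, both $h$ and $z \mapsto |z|\,h(z)$ belong to $L^1(\R^d)$, with norms independent of $j$. By definition of convolution,
$$
[\dot\Delta_j,a]b(x) = \int_{\R^d} h_j(x-y)\bigl(a(y)-a(x)\bigr) b(y)\,dy.
$$
Then I would invoke the fundamental theorem of calculus to write
$$
a(y)-a(x) = \int_0^1 (y-x)\cdot \nabla a\bigl(x+\tau(y-x)\bigr)\,d\tau,
$$
which inserts a factor $(y-x)$ inside the integrand; together with $h_j(x-y)$ this produces the kernel $(x-y)h_j(x-y)$, whose $L^1$ norm is exactly $2^{-j}\|\,|\cdot|\,h\,\|_{L^1}$ by the change of variables $z=2^j(x-y)$.

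Next, taking absolute values and using the Fubini theorem together with Minkowski's integral inequality in $L^r$, I would bound
$$
\bigl\| [\dot\Delta_j,a]b \bigr\|_{L^r} \leq \int_0^1\int_{\R^d} \bigl|(x-y)h_j(x-y)\bigr|\cdot \bigl\| \nabla a(x+\tau(y-x))\,b(y)\bigr\|_{L^r_x}\,dy\,d\tau.
$$
For each fixed $y$ and $\tau$, H\"older's inequality with $1/p+1/q=1/r$ gives
$$
\bigl\| \nabla a(\cdot+\tau(y-\cdot))\,b(y)\bigr\|_{L^r_x}\leq |b(y)|\cdot \bigl\|\nabla a\bigr\|_{L^q}
$$
(after the change of variable $x\mapsto x+\tau(y-x)$, whose Jacobian is $(1-\tau)^{-d}$; this is the only mildly delicate point, and is handled by first carrying out the $x$--integration and then the $y$--integration, or equivalently by an application of Young's convolution inequality to the $y$--integral afterwards). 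Combining, one gets
$$
\bigl\|[\dot\Delta_j,a]b\bigr\|_{L^r}\leq \bigl\|\nabla a\bigr\|_{L^q}\,\|b\|_{L^p}\,\bigl\|\,|\cdot|\,h_j\,\bigr\|_{L^1} = 2^{-j}\,C\,\|\nabla a\|_{L^q}\,\|b\|_{L^p},
$$
with $C=\||\cdot|h\|_{L^1}$ independent of $j$, which is exactly the claimed inequality.

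The only step I expect to require some care is the bookkeeping in the application of H\"older with the Jacobian $(1-\tau)^{-d}$: one must integrate in $y$ first using Young's inequality (applied to the convolution kernel $|z|h_j(z)$ on $L^p$) and in $x$ second, rather than the other way around, so that the singularity at $\tau=1$ is avoided. Once this ordering is chosen correctly, the estimate is completely mechanical, and the factor $2^{-j}$ comes solely from the scaling of $\||\cdot|h_j\|_{L^1}$.
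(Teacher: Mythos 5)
Your overall strategy is the standard one (the paper itself gives no proof, referring to \cite[Chap.~2]{HJR}, where precisely this kernel argument appears): represent $\dot\Delta_j$ as convolution with $h_j=2^{jd}h(2^j\cdot)$, write $[\dot\Delta_j,a]b(x)=\int h_j(x-y)(a(y)-a(x))b(y)\,dy$, insert the first-order Taylor formula, and let the factor $2^{-j}$ come from $\norme{|\cdot|\,h_j}_{L^1}=2^{-j}\norme{|\cdot|\,h}_{L^1}$. That part is correct. But the middle of your argument, as written, does not hold together. Your Minkowski display leaves the $x$-dependent kernel $|(x-y)h_j(x-y)|$ outside the $L^r_x$ norm, and the subsequent ``H\"older'' bound $\norme{\nabla a(\cdot+\tau(y-\cdot))\,b(y)}_{L^r_x}\leq |b(y)|\,\norme{\nabla a}_{L^q}$ is false: for fixed $y$ and $\tau$ there is only \emph{one} $x$-dependent factor, so the left-hand side equals $|b(y)|\,(1-\tau)^{-d/r}\norme{\nabla a}_{L^r}$ --- wrong exponent and a $\tau$-singular constant. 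Moreover, in this fixed-$y$ scheme the remaining $y$-integration can only produce $\norme{b}_{L^1}$, never $\norme{b}_{L^p}$, and the remedies you suggest (reordering the integrations, or Young's inequality in $y$) do not apply, because $\nabla a((1-\tau)x+\tau y)$ is neither a function of $x-y$ nor of $y$ alone.

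The missing idea is a simple and standard substitution: set $z=x-y$ \emph{before} applying Minkowski, so that
$$\bigl|[\dot\Delta_j,a]b(x)\bigr|\leq \int_0^1\!\!\int_{\R^d}|z|\,|h_j(z)|\;\bigl|\nabla a(x-\tau z)\bigr|\,\bigl|b(x-z)\bigr|\,dz\,d\tau,$$
and the composition is now a pure translation in $x$ for each fixed $(z,\tau)$ --- no Jacobian occurs at all. Keeping \emph{both} $x$-dependent factors inside the norm, Minkowski gives $\norme{[\dot\Delta_j,a]b}_{L^r}\leq\int_0^1\!\int|z|\,|h_j(z)|\,\norme{\nabla a(\cdot-\tau z)\,b(\cdot-z)}_{L^r}\,dz\,d\tau$, and H\"older with $1/p+1/q=1/r$ together with translation invariance bounds the inner norm by $\norme{\nabla a}\,\norme{b}$ uniformly in $(z,\tau)$; the $z$-integral then yields $C2^{-j}$. (Incidentally, the paper's statement has a slip --- the hypotheses put $\nabla a\in L^p$, $b\in L^q$ while the conclusion displays $\norme{\nabla a}_{L^q}\norme{b}_{L^p}$; since the argument is symmetric in the two exponents this is harmless, but you may as well match them.) With this correction your proof is complete and coincides with the cited one.
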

The following estimates are   proved in  \cite[Chap. 2]{HJR} and \cite{RLG}, respectively.
\begin{Prop}\label{C1} Assume that $d=1$ and that  $1\leq p\leq\infty.$   The following inequalities hold: 
\begin{itemize}
\item  If $s\in \left]-\min\left(\frac{1}{p},\frac{1}{p'}\right),\frac{1}{p}+1\right]$, then
 \begin{equation}\label{eq:com1}
2^{js}\norme{[w,\dot{\Delta}_j]\d_xv}_{L^p}\leq Cc_j\norme{\d_x w}_{\dot{\mathbb{B}}^{\frac{1}{p}}_{p,1}}\norme{v}_{\dot{\mathbb{B}}^{s}_{p,1}}  \with\sum_{j\in\mathbb{Z}}c_j=1.
\end{equation}
\item  If $s\in \left[-\min\left(\frac{1}{p},\frac{1}{p'}\right),\frac{1}{p}+1\right[$, then
\begin{equation}\label{eq:com3} 
\sup_{j\in\Z} 2^{js}\|[w,\ddj]\d_xv\|_{L^p}\leq C\|\d_xw\|_{\dot\B^{\frac1p}_{p,1}} \|v\|_{\dot\B^{s}_{p,\infty}}.\end{equation}
\item If  $s\in \left]-1-\min(\frac{1}{p},\frac{1}{p'}),\frac{1}{p}\right]$, then we have
 \begin{equation}\label{eq:com2}
  \norme{\d_x([w,\dot{\Delta}_j]v)}_{L^p}\leq Cc_j2^{-js}\norme{\d_x w}_{\dot{\mathbb{B}}^{\frac{1}{p}}_{p,1}}\norme{v}_{\dot{\mathbb{B}}^{s}_{p,1}} \with\sum_{j\in\mathbb{Z}}c_j=1.
  \end{equation}
  \end{itemize}
 \end{Prop}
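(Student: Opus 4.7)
The plan is to prove all three commutator estimates through a unified treatment based on Bony's paraproduct decomposition, followed by the classical continuity properties of the paraproduct $T$ and remainder $R$ on homogeneous Besov spaces.

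First, I would rewrite the commutator using the decomposition
$$w\,\partial_x v = T_w\partial_x v + T_{\partial_x v}w + R(w,\partial_x v),$$
which gives
$$[\dot\Delta_j,w]\partial_x v = \bigl[\dot\Delta_j,T_w\bigr]\partial_x v
+ \bigl(\dot\Delta_j T_{\partial_x v}w - T_{\dot\Delta_j\partial_x v}w\bigr)
+ \bigl(\dot\Delta_j R(w,\partial_x v) - R(w,\dot\Delta_j\partial_x v)\bigr).$$
Each of the three terms on the right has Fourier support in an annulus of size $2^j$, so the task reduces to bounding each piece in $L^p$ and then summing on $j$ with the weights $2^{js}$ corresponding to the target norm.

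For the paraproduct commutator $[\dot\Delta_j,T_w]\partial_x v$, I would expand $T_w \partial_x v=\sum_k \dot S_{k-1}w\,\dot\Delta_k\partial_x v$ and observe that, after applying $[\dot\Delta_j,\cdot]$, only the indices $k$ with $|k-j|\leq N_0$ survive. Applying Lemma \ref{Commutateur1} to each of the finitely many remaining terms produces a gain $2^{-j}$ that compensates the derivative on $v$, and the resulting factor $\|\nabla \dot S_{k-1}w\|_{L^\infty}$ is controlled by $\|\partial_x w\|_{\dot\B^{1/p}_{p,1}}$ via the embedding $\dot\B^{1/p}_{p,1}\hookrightarrow L^\infty$. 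The other two pieces --- the paraproduct of $\partial_x v$ against $w$ and the symmetric remainder --- are handled directly via the continuity of $T$ and $R$ on homogeneous Besov spaces, noting that each such piece lives at frequency comparable to $2^j$ modulo a fixed shift, so that passing from an $L^p$ bound on a single block to an $\ell^1$ (respectively $\ell^\infty$) Besov bound is routine.

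For the derivative version \eqref{eq:com2}, I would use the identity
$$\partial_x[\dot\Delta_j,w]v = [\dot\Delta_j,w]\partial_x v + [\dot\Delta_j,\partial_x w]v$$
to reduce to two commutators of the previous type, with an overall shift of one in the regularity index. The main obstacle lies in tracking the sharp endpoints: the remainder piece requires $s>-\min(1/p,1/p')$ for convergence in low frequencies, while the paraproduct piece is valid up to $s\leq 1/p+1$; the strict versus non-strict inequalities distinguishing the $\ell^1$ version \eqref{eq:com1} from the $\ell^\infty$ version \eqref{eq:com3} are then dictated by the side from which the sequence $(c_j)$ must be $\ell^1$-summable. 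Once these bookkeeping details are settled, each of the three estimates follows by summing appropriately in $j$.
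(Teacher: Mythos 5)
First, a point of context: the paper does not prove Proposition~\ref{C1} at all — it is cited as established in \cite[Chap.~2]{HJR} and \cite{RLG} — so your attempt supplies an argument the paper elides, and must be judged on its own. Your Bony splitting of the commutator into a paraproduct commutator, a ``reversed'' paraproduct piece and a remainder piece is the standard route, and your treatment of $[\dot\Delta_j,T_w]\partial_xv$ via Lemma~\ref{Commutateur1} together with $\dot\B^{1/p}_{p,1}\hookrightarrow L^\infty$ is correct.

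The gap is in the remaining two pieces, and it is not a bookkeeping matter. A minor inaccuracy first: it is false that all three terms are spectrally supported in an annulus of size $2^j$. In $T_{\dot\Delta_j\partial_xv}w=\sum_k\dot S_{k-1}(\dot\Delta_j\partial_xv)\,\dot\Delta_kw$ the surviving $k$ satisfy $k\gtrsim j$, so the sum ranges over all high frequencies, and $R(w,\dot\Delta_j\partial_xv)$ is supported in a \emph{ball} of radius $\sim2^j$; these must be estimated as series, not read off as one block (in fact both admit bounds with no constraint on $s$ at all). The genuine obstruction is $\dot\Delta_jR(w,\partial_xv)=\sum_{k\geq j-N}\dot\Delta_j(\dot\Delta_kw\,\widetilde{\dot\Delta}_k\partial_xv)$, which is the sole source of the lower endpoint $s>-\min(1/p,1/p')$, and ``handled directly via the continuity of $R$'' does not reach it: the standard remainder continuity with Lebesgue exponents satisfying $1/p_1+1/p_2=1/p$ (say $w\in\dot\B^{1+1/p}_{p,1}$, $\partial_xv\in\dot\B^{s-1-1/p}_{\infty,\infty}$) requires the sum of regularities to be positive, which only gives $s>0$. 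To obtain the stated range one must estimate each resonant product in $L^q$ with $q<p$ and then Bernstein $L^q\to L^p$ on the $\dot\Delta_j$-block; the right choice is $q=p/2$ (Bernstein cost $2^{j/p}$) when $p\geq2$ and $q=1$ via $L^p\times L^{p'}$ H\"older (cost $2^{j/p'}$) when $p\leq2$, which is exactly where $\min(1/p,1/p')$ enters. Your sketch records neither this Bernstein/H\"older step nor the $p\lessgtr2$ dichotomy, so as written it establishes at best $s>0$. One last remark: your Leibniz identity for \eqref{eq:com2} generates the commutator $[\dot\Delta_j,\partial_xw]v$, which is \emph{not} of the form $[\dot\Delta_j,w]\partial_xv$ covered by \eqref{eq:com1}--\eqref{eq:com3}; it needs its own (simpler) Bony estimate and its own verification of the shifted range $s\in\,]-1-\min(1/p,1/p'),1/p]$.
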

The following product laws in Besov spaces have been used several times. 
 \begin{Prop} \label{LP} Let $(s,p,r)\in ]0,\infty[\times[1,\infty]^2.$ Then, 
 $\dot{\mathbb{B}}^{s}_{p,r}\cap L^\infty$ is an algebra and we have
\begin{equation}\label{eq:prod1}
\norme{ab}_{\dot{\mathbb{B}}^{s}_{p,r}}\leq C\bigl(\norme{a}_{L^\infty}\norme{b}_{\dot{\mathbb{B}}^{s}_{p,r}}+\norme{a}_{\dot{\mathbb{B}}^{s}_{p,r}}\norme{b}_{L^\infty}\bigr)\cdotp
\end{equation}
If, furthermore, $-\min(d/p,d/p')<s\leq d/p,$ then the following inequality holds:
\begin{equation}\label{eq:prod2}
\|ab\|_{\dot\B^{s}_{p,1}}\leq C\|a\|_{\dot\B^{\frac dp}_{p,1}}\|b\|_{\dot\B^{s}_{p,1}}.
\end{equation}
We have, if $-\min(d/p,d/p')<s\leq d/p+1,$ 
\begin{equation}\label{eq:prod3}
\|a\,b\|^\ell_{\dot\B^{s}_{p,1}}\lesssim  
\|a\|_{L^\infty\cap \dot\B^{\frac1p+1}_{p,1}}\,\|b\|_{\dot\B^{s-1}_{p,1}}.\end{equation}
In the case $d=1$ and $2\leq p\leq 4,$ we have 
\begin{equation}\label{eq:prod4}
\|ab\|_{\dot\B^{\frac12}_{2,1}}\lesssim 
\bigl(\|a\|_{\dot\B^{\frac1p-1}_{p,1}}^\ell+\|a\|^h_{\dot\B^{\frac12}_{2,1}}\bigr)
\bigl(\|b\|^\ell_{\dot\B^{\frac2p-\frac12}_{p,1}}+\|b\|_{\dot\B^{\frac12}_{2,1}}^h\bigr)\cdotp
\end{equation}
\end{Prop}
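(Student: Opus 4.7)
The approach to all four inequalities relies on Bony's homogeneous paraproduct decomposition
\[
ab = T_ab + T_ba + R(a,b),\qquad T_ab\triangleq\sum_{j}\dot S_{j-1}a\,\ddj b,\quad R(a,b)\triangleq\!\!\sum_{|j'-j|\leq 1}\!\!\ddj a\,\dot\Delta_{j'} b,
\]
together with the two spectral localization principles: the $j$-th summand of $T_ab$ is Fourier-supported in an annulus of size $2^j,$ while the $j$-th summand of $R(a,b)$ is supported in a ball of radius $\lesssim 2^j.$ Combined with Bernstein's inequalities, this delivers the full toolbox of paraproduct estimates used throughout \cite[Chap.~2]{HJR}, and each of the four inequalities of Proposition \ref{LP} will follow by applying this toolbox with the appropriate choice of indices.

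Inequalities \eqref{eq:prod1} and \eqref{eq:prod2} are classical. For \eqref{eq:prod1} one bounds $T_ab$ and $T_ba$ using $\|\dot S_{j-1}a\|_{L^\infty}\lesssim \|a\|_{L^\infty}$ (and the symmetric bound on $b$), while the hypothesis $s>0$ ensures the $\ell^r$-summability of the low–high contribution of $R(a,b).$ For \eqref{eq:prod2} one replaces the $L^\infty$ bounds by the embedding $\dot\B^{d/p}_{p,1}\hookrightarrow L^\infty;$ the lower bound $s>-\min(d/p,d/p')$ is what makes the remainder summable, whereas the upper bound $s\leq d/p$ controls the admissible regularity shift between factors in $T_ba.$

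Inequality \eqref{eq:prod3} exploits the key observation that, on a fixed low-frequency band,
\[
\|f\|^\ell_{\dot\B^{\sigma}_{p,1}}=\!\sum_{j\leq J_0}2^{j\sigma}\|\ddj f\|_{L^p}\leq 2^{J_0}\!\sum_{j\leq J_0}2^{j(\sigma-1)}\|\ddj f\|_{L^p}\leq 2^{J_0}\|f\|_{\dot\B^{\sigma-1}_{p,1}}\qquad\forall\sigma\in\R.
\]
Applying Bony's decomposition to $(ab)^\ell$ and using this trick on each block reduces the problem to the product law at level $s-1$: the contribution $(T_ab)^\ell$ is bounded by $\|a\|_{L^\infty}\|b\|_{\dot\B^{s-1}_{p,1}},$ while $(T_ba)^\ell$ and $R(a,b)^\ell$ absorb the extra derivative carried by $\|a\|_{\dot\B^{1/p+1}_{p,1}}.$ The admissible range $-\min(d/p,d/p')<s\leq d/p+1$ then falls out exactly as in \eqref{eq:prod2} after the shift $s\mapsto s-1.$

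For \eqref{eq:prod4}, I would split $a=a^\ell+a^h$ and $b=b^\ell+b^h$ and estimate the four cross-terms separately in $\dot\B^{1/2}_{2,1}.$ The piece $a^hb^h$ is handled directly by \eqref{eq:prod2} with $d=1,$ $p=2,$ $s=1/2.$ The mixed pieces $a^\ell b^h$ and $a^hb^\ell$ are treated by combining the $L^2$-based paraproduct estimates on the high-frequency factor with the Bernstein upgrade $\dot\B^{1/p-1}_{p,1}\cap\{\text{low freq}\}\hookrightarrow L^\infty,$ which converts the $L^p$-based control of the low-frequency factor into an $L^\infty$ bound with a constant depending only on $J_0.$ The low-low piece $a^\ell b^\ell$ is estimated via Hölder in $L^{p/2}$ (which requires $p\geq 2$) followed by Bernstein from $L^{p/2}$ to $L^2$ on each dyadic block of size $2^k,$ which costs $2^{k(2/p-1/2)}$ and is valid precisely because $p/2\leq 2,$ i.e. $p\leq 4;$ the resulting exponent $2/p-1/2$ matches exactly the regularity gap between $\dot\B^{2/p-1/2}_{p,1}$ and $\dot\B^{1/2}_{2,1}.$ The main technical obstacle is this last step: one has to verify that the exponents $1/p-1$ on $a^\ell$ and $2/p-1/2$ on $b^\ell$ are calibrated so that all four cross-term estimates close simultaneously, and the interval $2\leq p\leq 4$ is exactly what makes the Hölder step and the Bernstein upgrade compatible at both ends.
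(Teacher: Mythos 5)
Your arguments for \eqref{eq:prod1}--\eqref{eq:prod3} are essentially the paper's (Bony decomposition plus the low-frequency Bernstein shift), so the issue lies with \eqref{eq:prod4}, and specifically with your treatment of the low-low piece $a^\ell b^\ell.$

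You propose bounding $\|\dot\Delta_k(a^\ell b^\ell)\|_{L^2}$ by the Hölder pairing $\|a^\ell b^\ell\|_{L^{p/2}}\leq\|a^\ell\|_{L^p}\|b^\ell\|_{L^p}$ followed by a Bernstein upgrade $L^{p/2}\to L^2$ at cost $2^{k(2/p-1/2)}.$ This requires $b^\ell\in L^p,$ but $b^\ell$ is controlled only in $\dot\B^{2/p-1/2}_{p,1},$ whose regularity index $2/p-1/2$ is \emph{strictly positive} for $2\leq p<4.$ A positive Besov index gives no control of the $L^p$ norm on low frequencies: since $2^{j(2/p-1/2)}\to0$ as $j\to-\infty,$ one can have $\sum_{j\leq J_0}2^{j(2/p-1/2)}\|\ddj b\|_{L^p}<\infty$ while $\sum_{j\leq J_0}\|\ddj b\|_{L^p}=\infty.$ Your pairing is therefore valid only at the endpoint $p=4$, precisely where the index vanishes, and the ``regularity gap'' arithmetic you invoke does not rescue the argument because the constant in the putative embedding $\dot\B^{2/p-1/2}_{p,1}\cap\{\text{low freq}\}\hookrightarrow L^p$ is genuinely infinite. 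The correct Hölder pairing for this piece is $L^p\times L^{p^*}\to L^2$ with $1/p+1/p^*=1/2,$ combined with the critical embedding $\dot\B^{2/p-1/2}_{p,1}\hookrightarrow L^{p^*}$ — indeed $2/p-1/2=1/p-1/p^*$ is exactly the Sobolev index of that embedding, which is where the exponent in \eqref{eq:prod4} comes from — after which there is no Bernstein loss and the weights $2^{k/2}$ are summable on $k\leq J_0+2$ directly. The paper avoids the problem altogether by applying Bony's decomposition \emph{before} splitting into low and high frequencies, writing $ab=T'_ab^\ell+T'_ab^h+T_{b^\ell}a^\ell+T_ba^h+T_{b^h}a^\ell$: the paraproduct structure then automatically couples the lower-frequency factor to $L^\infty$ or $L^{p^*},$ and the leftover term $T_{b^h}a^\ell$ collapses to the single dyadic block $\dot S_{J_0}b^h\,\dot\Delta_{J_0+1}a^\ell$ by spectral support. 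Your split-then-multiply route is usable, but each block, including $a^\ell b^\ell,$ must still be handled through paraproducts or the critical Hölder exponents, not the symmetric $L^p\times L^p$ one.
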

\begin{proof} The first two inequalities are direct consequences of the results 
stated in \cite[Chap. 2]{HJR}. 
To prove the third one, we need  the  following so-called Bony decomposition for the product 
of two tempered distributions $a$ and $b$ (whenever it is defined):
$$ab=T_ab+T'_ba\with T_ab\triangleq\sum_{j\in\Z}\dot S_{j-1}a\,\ddj b\andf
T'_ba\triangleq \sum_{j\in\Z}\dot S_{j+2}b\,\ddj a.$$
Now, using Bernstein inequality and 
the results of continuity for $T$ and $T'$ stated in  \cite[Chap. 2]{HJR}, we may write:
$$\|T_ab\|_{\dot\B^s_{p,1}}^\ell\lesssim \|T_ab\|_{\dot\B^{s-1}_{p,1}}
\lesssim \|a\|_{L^\infty} \|b\|_{\dot\B^{s-1}_{p,1}}$$
and, provided, $s-1\leq d/p$ and $s>-\min(d/p,d/p'),$ 
$$\|T'_ba\|_{\dot\B^s_{p,1}}\lesssim \|a\|_{\dot\B^{\frac dp+1}_{p,1}}
 \|b\|_{\dot\B^{s-1}_{p,1}}.$$
 This gives \eqref{eq:prod3}.
 \medbreak
 For proving \eqref{eq:prod4}, we combine Bony's decomposition 
 and decomposition of $a$ and $b$ in low and high frequencies, writing
 $$ ab=T'_ab^\ell+T'_ab^h+T_{b^\ell}a^\ell+T_{b}a^h+T_{b^h}a^\ell.$$
 All the terms in the right-hand side, except for the last one, may be bounded
 by means of the standard results of continuity for operators $T$ and $T'$
   (see again \cite[Chap. 2]{HJR}). Setting $p^*=2p/(p-2),$ we get:
 $$\begin{aligned}
 \|T'_ab^\ell\|_{\dot\B^{\frac12}_{2,1}}&\lesssim\|a\|_{\dot\B^{\frac1p-1}_{p,1}}\|b^\ell\|_{\dot\B^{\frac1{p^*}+1}_{p^*,1}}\lesssim \|a\|_{\dot\B^{\frac1p-1}_{p,1}}\|b\|^\ell_{\dot\B^{\frac1p+1}_{p,1}}, \\
 \|T'_ab^h\|_{\dot\B^{\frac12}_{2,1}}&\lesssim\|a\|_{L^\infty}\|b\|^h_{\dot\B^{\frac12}_{2,1}},\\
 \|T_{b^\ell}a^\ell\|_{\dot\B^{\frac12}_{2,1}}& \lesssim \|b^\ell\|_{L^{p^*}}\|a^\ell\|_{\dot\B^{\frac12}_{p,1}},\\
 \|T_{b}a^h\|_{\dot\B^{\frac12}_{2,1}}&\lesssim\|b\|_{L^\infty}\|a^h\|_{\dot\B^{\frac12}_{2,1}}.
\end{aligned}$$ 
Finally, since $a^\ell=\dot S_{J_0+1}a$ and $b^h=({\rm Id}-\dot S_{J_0+1})b,$ we see that 
$$
T_{b^h} a^\ell=  \dot S_{J_0}b^h\,\dot\Delta_{J_0+1} a^\ell.$$
Consequently, 
$$\|T_{b^h} a^\ell\|_{\dot\B^{\frac12}_{2,1}}\lesssim
\|\dot\Delta_{J_0+1}a^\ell\|_{L^\infty} \|\dot S_{J_0}b^h\|_{L^2}\lesssim \|a\|_{L^\infty} 
 \|b\|^h_{\dot\B^{\frac12}_{2,1}}.$$
Adding up this latter inequality to the previous ones gives
$$
\|ab\|_{\dot\B^{\frac12}_{2,1}}\lesssim  \|a\|_{L^\infty}\|b\|^h_{\dot\B^{\frac12}_{2,1}}
+ \|a\|_{\dot\B^{\frac1p-1}_{p,1}}\|b\|^\ell_{\dot\B^{\frac1p+1}_{p,1}}
+\|b\|_{L^\infty}\|a^h\|_{\dot\B^{\frac12}_{2,1}}
+ \|b^\ell\|_{L^{p^*}}\|a^\ell\|_{\dot\B^{\frac12}_{p,1}}.$$
Then, using  Bernstein inequality, $2/p-1/2\leq1/p$ and the embeddings 
   $\dot\B^{\frac12}_{2,1}\hookrightarrow L^\infty$ and
   $\dot\B^{\frac2p-\frac12}_{p,1}\hookrightarrow L^{p^*}$
completes the proof of \eqref{eq:prod4}. 
\end{proof}
The following result for composition in Besov spaces may be found in \cite{HJR}. 
\begin{Prop}\label{Composition}
Let $f$ be a function in $\mathcal{C}^\infty(\mathbb{R})$ such that $f(0)=0$. Let $(s_1,s_2)\in]0,\infty[^2$ and \\$(p_1,p_2,r_1,r_2)\in[1,\infty]^4$. We assume that $s_1<\frac{d}{p_1}$ or that $s_1=\frac{d}{p_1}$ and $r_1=1$.

Then, for every real-valued function $u$ in $\dot{\mathbb{B}}^{s_1}_{p_1,r_1}\cap\dot{\mathbb{B}}^{s_2}_{p_2,r_2}\cap L^\infty$, the function $f\circ u$ belongs to $\dot{\mathbb{B}}^{s_1}_{p_1,r_1}\cap\dot{\mathbb{B}}^{s_2}_{p_2,r_2}\cap L^\infty,$ and we have in particular
$$\norme{f\circ u}_{\dot{\mathbb{B}}^{s_k}_{p_k,r_k}}\leq C\left(f',\norme{u}_{L^\infty}\right)\norme{u}_{\dot{\mathbb{B}}^{s_k}_{p_k,r_k}}\quad\hbox{for}\  k\in\{1,2\}.$$
\end{Prop}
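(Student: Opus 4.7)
The plan is to establish the estimate via Meyer's telescoping linearization of $f(u)$, combined with elementary Bernstein-type block estimates. Since $u$ lies in $\mathcal{S}'_h\cap L^\infty$, the partial sums $\dot S_j u$ are uniformly bounded by $C\|u\|_{L^\infty}$ and converge to $0$ (as $j\to-\infty$) and to $u$ (as $j\to+\infty$) in $\mathcal{S}'$; the technical hypothesis $s_1<d/p_1$, or $s_1=d/p_1$ with $r_1=1$, ensures that $\dot{\mathbb{B}}^{s_1}_{p_1,r_1}$ really is a space of tempered distributions (included in $\mathcal{S}'_h$), so that the convergence is compatible with the assumption $u\in\mathcal{S}'_h$. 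Using $f(0)=0$, I first write the telescoping identity
$$f(u)=\sum_{j\in\Z}\bigl(f(\dot S_{j+1}u)-f(\dot S_j u)\bigr)=\sum_{j\in\Z}m_j\,\dot\Delta_j u,\quad m_j(x)\triangleq \int_0^1 f'\bigl(\dot S_j u(x)+\tau\,\dot\Delta_j u(x)\bigr)\,d\tau,$$
the right-hand side converging in $\mathcal{S}'$ and pointwise almost everywhere in $L^\infty$.

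The next step is to derive the key pointwise bounds $\|m_j\|_{L^\infty}\leq C(f',\|u\|_{L^\infty})$ and, more importantly, the derivative estimates $\|\partial^\alpha m_j\|_{L^\infty}\leq C_\alpha(f,\|u\|_{L^\infty})\,2^{j|\alpha|}$ for every multi-index $\alpha$. These follow from Fa\`a di Bruno's formula applied to $m_j$ viewed as $f'$ evaluated at $\dot S_j u+\tau\dot\Delta_j u$, together with Bernstein's inequality which provides $\|\partial^\beta \dot S_j u\|_{L^\infty}+\|\partial^\beta \dot\Delta_j u\|_{L^\infty}\lesssim 2^{j|\beta|}\|u\|_{L^\infty}$. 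Thus, although $m_j$ is not spectrally localized, it behaves as if its spectrum were confined to a ball of radius $\sim 2^j$.

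I then fix $k\in\{1,2\}$, localize at frequency $2^{j'}$ via $\dot\Delta_{j'}$, and split the telescoping sum according to $j\geq j'-N_0$ or $j<j'-N_0$, for a suitable absolute $N_0$. In the high-frequency regime, the $L^{p_k}$-boundedness of $\dot\Delta_{j'}$ gives immediately
$$\norme{\dot\Delta_{j'}(m_j\,\dot\Delta_j u)}_{L^{p_k}}\leq C\,\norme{m_j}_{L^\infty}\norme{\dot\Delta_j u}_{L^{p_k}}.$$
In the low-frequency regime, Bernstein's inequality of order $M>s_k$ combined with the derivative bound on $m_j$ and Leibniz's rule yields
$$\norme{\dot\Delta_{j'}(m_j\,\dot\Delta_j u)}_{L^{p_k}}\lesssim 2^{-j'M}\,\norme{\partial^M(m_j\,\dot\Delta_j u)}_{L^{p_k}}\lesssim 2^{-(j'-j)M}\,C(f,\|u\|_{L^\infty})\,\norme{\dot\Delta_j u}_{L^{p_k}}.$$
Multiplying by $2^{j's_k}$ and applying Young's convolution inequality on $\ell^{r_k}(\Z)$ (using $0<s_k<M$) then delivers the claimed bound in $\dot{\mathbb{B}}^{s_k}_{p_k,r_k}$.

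The main difficulty I anticipate is the rigorous justification of the telescoping identity at the critical endpoint $s_1=d/p_1$, $r_1=1$: one must combine the defining property of $\mathcal{S}'_h$ at low frequencies with the sharp embedding $\dot{\mathbb{B}}^{d/p_1}_{p_1,1}\hookrightarrow L^\infty$ in order to pass to the limit inside the nonlinearity $f$ and control the tails of the series. The derivative bound on $m_j$ is also slightly delicate, since one must keep careful track of the combinatorics of Fa\`a di Bruno to verify that the constant depends only on $f$ and $\|u\|_{L^\infty}$, as stated.
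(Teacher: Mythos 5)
Your argument is correct and is exactly the classical Meyer first-linearization proof of the composition estimate; the paper itself gives no proof but simply refers to \cite{HJR}, where precisely this telescoping decomposition $f(u)=\sum_j m_j\,\dot\Delta_j u$ with the bounds $\|\partial^\alpha m_j\|_{L^\infty}\lesssim 2^{j|\alpha|}$ and the low/high splitting of the sum is carried out. Your use of the hypothesis $s_1<d/p_1$, or $s_1=d/p_1$ with $r_1=1$ (to get $\dot S_ju\to0$ in $L^\infty$, hence convergence of the telescoping series and $f\circ u\in\mathcal{S}'_h$), matches the role this assumption plays in the cited proof.
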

The following result is the key to Theorem  \ref{ThmExistLp} in the general case. 
\begin{Lemme} \label{CP} Assume that $d=1.$ Let $p\in[2,4]$ and $s\in[1/2,3/2[.$  Define $p^*\triangleq 2p/(p-2).$  For all $j\in\Z,$ 
denote $\cR_j\triangleq \dot S_{j-1}w\,\d_x\ddj z-\ddj(w\,\d_xz).$

There exists  a constant $C$ depending only on  the threshold 
number $J_0$ between low and high frequencies and on $s,$   such that 
 $$\displaylines{
\sum_{j\geq J_0}\left(2^{js}\norme{\cR_j}_{L^2}\right)\leq
C\Bigl(\norme{\partial_xw}_{L^\infty}\norme{\d_xz}^h_{\dot{\mathbb{B}}^{s-1}_{2,1}}
+ \norme{\partial_xz}_{\dot{\mathbb{B}}^{\frac{1}{p}-1}_{p,1}}\norme{w}^\ell_{\dot{\mathbb{B}}^{1+\frac{1}{p^*}}_{p^*,1}}\hfill\cr\hfill
+\norme{\partial_xz}_{\dot{\mathbb{B}}^{s-\frac32}_{\infty,\infty}}\norme{w}^h_{\dot{\mathbb{B}}^s_{2,1}}
+ \norme{\partial_xz}^\ell_{\dot{\mathbb{B}}^{s-\frac12-\frac{1}{p}}_{p,1}}\norme{\d_xw}^\ell_{\dot{\mathbb{B}}^{-\frac{1}{p^*}}_{p^*,1}}\Bigr)\cdotp}$$
In the case $s=3/2,$  we have $$\displaylines{
\sum_{j\geq J_0}\left(2^{j\frac{3}{2}}\norme{\cR_j}_{L^2}\right)\leq
C\Bigl(\norme{\partial_xw}_{L^\infty}\norme{\d_xz}^h_{\dot{\mathbb{B}}^{\frac12}_{2,1}}
+ \norme{\partial_xz}_{\dot{\mathbb{B}}^{\frac{1}{p}-1}_{p,1}}\norme{w}^\ell_{\dot{\mathbb{B}}^{1+\frac{1}{p^*}}_{p^*,1}}\hfill\cr\hfill
+\norme{\partial_xz}_{L^\infty}\norme{w}^h_{\dot{\mathbb{B}}^{\frac32}_{2,1}}
+ \norme{\partial_xz}^\ell_{\dot{\mathbb{B}}^{1-\frac{1}{p}}_{p,1}}\norme{\d_xw}^\ell_{\dot{\mathbb{B}}^{-\frac{1}{p^*}}_{p^*,1}}\Bigr)\cdotp}$$
\end{Lemme}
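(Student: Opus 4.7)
The plan is to combine Bony's paraproduct decomposition with careful low/high-frequency splittings of both $w$ and $z$ in order to isolate the four contributions on the right-hand side.

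For $j\geq J_0$, I would first split $w = w^\ell + w^h$ and $\d_xz = (\d_xz)^\ell + (\d_xz)^h$, and use that $\ddj(\d_xz)^\ell = 0$ for $j \geq J_0+2$ (so $\dot S_{j-1}w\,\ddj(\d_xz)^\ell$ only contributes at $O(1)$ many scales, absorbed in $C = C(J_0,s)$) and similarly $\ddj(w^\ell(\d_xz)^\ell) = 0$ there. Modulo such finite-$j$ boundary terms (which, after H\"older in $(L^{p^*}, L^p)$ and Bernstein on the spectrally-bounded pieces, contribute to the fourth term), one reduces to
$$\cR_j = \bigl[\dot S_{j-1}w^\ell\,\ddj(\d_xz)^h - \ddj(w^\ell(\d_xz)^h)\bigr] + \bigl[\dot S_{j-1}w^h\,\ddj(\d_xz)^h - \ddj(w^h(\d_xz)^h)\bigr] - \ddj(w^h(\d_xz)^\ell).$$
Each of the three pieces is then estimated in turn.

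For the high-high commutator $[\dot S_{j-1}w^h,\ddj](\d_xz)^h$, Lemma~\ref{Commutateur1} with H\"older pair $(L^\infty,L^2)$ yields $2^{-j}\|\d_xw\|_{L^\infty}\|\wt\ddj(\d_xz)^h\|_{L^2}$, which, after multiplication by $2^{js}$ and summation in $j\geq J_0$, telescopes into $\|\d_xw\|_{L^\infty}\|\d_xz\|^h_{\dot\B^{s-1}_{2,1}}$, producing the first term. For the low-high commutator $[\dot S_{j-1}w^\ell,\ddj](\d_xz)^h$, the bounded spectral support of $w^\ell$ permits an iterated Taylor expansion of the commutator (using that all moments of the kernel of $\ddj$ vanish, since $\hat\varphi$ vanishes identically near $0$), giving $\|[\dot S_{j-1}w^\ell,\ddj]f\|_{L^2}\lesssim 2^{-jN}\|\d_x^Nw^\ell\|_{L^{p^*}}\|f\|_{L^p}$ for any $N\in\N$. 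Combining Bernstein on $w^\ell$ with a choice of $N$ satisfying $s+1-1/p-N<0$ makes the $j$-sum geometrically convergent and produces the second term $\|\d_xz\|_{\dot\B^{1/p-1}_{p,1}}\|w\|^\ell_{\dot\B^{1+1/p^*}_{p^*,1}}$, with a $J_0$-dependent constant.

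For the remaining piece $-\ddj(w^h(\d_xz)^\ell)$, I would apply Bony's decomposition again to the product $w^h\,(\d_xz)^\ell$. The dominant contribution comes from $T_{(\d_xz)^\ell}w^h=\sum_k\dot S_{k-1}(\d_xz)^\ell\,\ddk w^h$; using $L^\infty\times L^2$ H\"older, with $\|\dot S_{k-1}(\d_xz)^\ell\|_{L^\infty}$ estimated through the definition of $\dot\B^{s-3/2}_{\infty,\infty}$ as a geometric series in $k'\leq k-2$ (dominated by its top scale precisely when $s<3/2$), one obtains the third term $\|\d_xz\|_{\dot\B^{s-3/2}_{\infty,\infty}}\|w\|^h_{\dot\B^s_{2,1}}$. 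When $s=3/2$ the geometric series diverges logarithmically, so one uses instead $\|\dot S_{k-1}(\d_xz)^\ell\|_{L^\infty}\leq\|\d_xz\|_{L^\infty}$, producing the variant form stated in the second part of the lemma. The remaining paraproduct pieces $T_{w^h}(\d_xz)^\ell$ and $R(w^h,(\d_xz)^\ell)$ are supported on a bounded number of frequency scales (both factors being spectrally bounded together) and contribute only through finite-$j$ terms absorbed in the $C(J_0,s)$ constant or in the fourth term.

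The main obstacle throughout is the delicate bookkeeping of the weighted $\ell^1$-sums in $j\geq J_0$: the four Besov exponents on the right are forced precisely by the interplay between the $2^{-j}$ (iterated) commutator gain, Bernstein's inequality on the spectrally-bounded pieces $w^\ell$ and $(\d_xz)^\ell$, the $2^{j/2}$ 1D-Bernstein gain used in the residual remainders, and the requirement that each series in $j$ converge. The threshold $s<3/2$ is exactly what secures the geometric summation in term~3, so the borderline case $s=3/2$ must be isolated with the stronger $L^\infty$ bound on $\d_xz$.
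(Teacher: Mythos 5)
The decomposition you start from, after discarding the genuine boundary terms, is algebraically correct, but two of the three pieces are misidentified, and the error is fatal.

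\textbf{The second bracket is not a commutator.} You treat
$\dot S_{j-1}w^h\,\ddj(\d_xz)^h - \ddj\bigl(w^h(\d_xz)^h\bigr)$
as if it were $[\dot S_{j-1}w^h,\ddj](\d_xz)^h$ and apply Lemma~\ref{Commutateur1}. But those two expressions differ by
$\ddj\bigl(({\rm Id}-\dot S_{j-1})w^h\,(\d_xz)^h\bigr),$
which is not small: it is (up to tails) the second paraproduct $T'_{(\d_xz)^h}w^h$, and its $\dot\B^s_{2,1}$-norm is of the same order as the third term of the lemma, namely $\norme{\d_xz}^h_{\dot{\mathbb{B}}^{s-\frac32}_{\infty,\infty}}\norme{w^h}_{\dot\B^{\frac32}_{2,1}}$. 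Your ``remaining piece'' $-\ddj(w^h(\d_xz)^\ell)$ only delivers the $(\d_xz)^\ell$ part of that term, so the $(\d_xz)^h$ contribution is simply absent from the argument. The paper avoids this by running Bony's decomposition on $w\d_xz$ from the outset, so that the full $T'_{\d_xz}w$ piece is isolated as one object ($\cR^1_j$) and estimated by paraproduct bounds, while the remaining genuinely commutator-type pieces ($\cR^2_j$, $\cR^3_j$) are handled with Lemma~\ref{Commutateur1}; your direct low/high split of $w$ and $\d_xz$ hides the $T'$ contribution inside a term you then treat as a commutator.

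\textbf{The iterated Taylor commutator estimate is false.} The claimed bound
$\|[\dot S_{j-1}w^\ell,\ddj]f\|_{L^2}\lesssim 2^{-jN}\|\d_x^Nw^\ell\|_{L^{p^*}}\|f\|_{L^p}$
does not hold. Taylor-expanding $w^\ell(y)-w^\ell(x)$ produces, for each $1\le k<N$, a term
$(w^\ell)^{(k)}(x)\,(m_{k,j}f)(x)$ where $m_{k,j}$ is the Fourier multiplier with symbol $i^k2^{-jk}(\d^k\wh\varphi)(2^{-j}\xi)$, which is a bounded multiplier on the annulus carrying $f$ but is not zero; the vanishing of the moments $\int u^k\varphi(u)\,du$ only says that $\ddj$ annihilates polynomials, it does not cancel these intermediate contributions. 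The dominant one is $k=1$, giving the usual $2^{-j}\|\d_xw^\ell\|_{L^{p^*}}\|f\|_{L^p}$, and since Bernstein only increases, not decreases, derivatives on a low-frequency piece, $\|\d_xw^\ell\|_{L^{p^*}}$ cannot be controlled by $2^{-j(N-1)}\|\d_x^Nw^\ell\|_{L^{p^*}}$. (A near-oscillating example $w^\ell\approx e^{i\epsilon x}\chi$, $\epsilon\sim2^{J_0}\ll2^j$, gives commutator $\sim 2^{-j}\epsilon\|f\|$ whereas $2^{-jN}\epsilon^N=(2^{-j}\epsilon)^N\ll 2^{-j}\epsilon$, so the claimed gain is strictly impossible.) Consequently, your $j$-sum for the low-high piece does not become geometrically convergent, and the paper's second term $\norme{\d_xz}_{\dot\B^{\frac1p-1}_{p,1}}\norme{w}^\ell_{\dot\B^{1+\frac1{p^*}}_{p^*,1}}$ is not reached along this route. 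In the paper this term comes from the paraproduct estimate for $T'_{\d_xz}w^\ell$, whose Fourier support lies at $|\xi|\lesssim 2^{J_0}$, making $\ddj T'_{\d_xz}w^\ell$ genuinely a boundary contribution over finitely many $j$; your commutator $[w^\ell,\ddj](\d_xz)^h$ is a different object, supported at $|\xi|\sim2^j$ for all $j\geq J_0$, and does not vanish for large $j$, so it cannot be absorbed into a boundary constant.
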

\begin{proof}
{}From Bony decomposition recalled above, we deduce that
$$\begin{aligned}\cR_j&=-\ddj(T'_{\d_xz}w)-\sum_{|j'-j|\leq4} [\ddj,\dot S_{j'-1}w]\d_x\dot\Delta_{j'}z
- \sum_{|j'-j|\leq 1}\left(\dot{S}_{j'-1}w-\dot{S}_{j-1}w\right)\dot{\Delta}_j\dot{\Delta}_{j'}\partial_xz\\
&=\mathcal{R}^1_j + \mathcal{R}^2_j +  \mathcal{R}^3_j. 
\end{aligned}$$
To estimate $\mathcal{R}^1_j$, we decompose $w$ into low and high 
frequencies, getting
$$T'_{\d_xz}w=T'_{\d_xz}w^\ell+T'_{\d_xz}w^h.$$
Because $1/p+1/p^*=1/2,$ the classical results of continuity for paraproduct and remainder operators
(see e.g. \cite[Chap. 2]{HJR}) ensure that
$$\|T'_{\d_xz}w^\ell\|_{\dot\B^{\frac12}_{2,1}}\lesssim
\|\d_xz\|_{\dot\B^{\frac1p-1}_{p,1}}\|w^\ell\|_{\dot\B^{\frac1{p^*}+1}_{p^*,1}},$$
and we have
$$\|T'_{\d_xz}w^h\|_{\dot\B^{s}_{2,1}}\lesssim
\|\d_xz\|_{\dot\B^{s-\frac32}_{\infty,\infty}}\|w^h\|_{\dot\B^{\frac32}_{2,1}}\ \hbox{if } 0<s<3/2,\andf
\|T'_{\d_xz}w^h\|_{\dot\B^{\frac32}_{2,1}}\lesssim
\|\d_xz\|_{L^\infty}\|w^h\|_{\dot\B^{\frac32}_{2,1}}.
$$
Observing that $T'_{\d_xz}w^\ell$ contains only low frequencies so that
its norm in $\dot\B^{s}_{2,1}$ is controlled by its norm 
in  $\dot\B^{\frac12}_{2,1}$  if $s\geq1/2,$ we deduce that 
\begin{eqnarray}\label{eq:Rj1a}
\qquad\sum_{j\in\Z}\left(2^{js}\norme{\cR_j^1}_{L^2}\right)&\!\!\!\leq\!\!\!& C\Bigl(
\norme{\partial_xz}_{\dot{\mathbb{B}}^{\frac{1}{p}-1}_{p,1}}\norme{w^\ell}_{\dot{\mathbb{B}}^{1+\frac{1}{p^*}}_{p^*,1}}
+\norme{\partial_xz}_{\dot\B^{s-\frac32}_{\infty,\infty}}\norme{w^h}_{\dot{\mathbb{B}}^{\frac32}_{2,1}}\Bigr)
\  \hbox{ if }\ 1/2\leq s<3/2,\\\label{eq:Rj1b}
\qquad\sum_{j\in\Z}\left(2^{\frac32j}\norme{\cR_j^1}_{L^2}\right)&\!\!\!\leq\!\!\!& C\Bigl(
\norme{\partial_xz}_{\dot{\mathbb{B}}^{\frac{1}{p}-1}_{p,1}}\norme{w^\ell}_{\dot{\mathbb{B}}^{1+\frac{1}{p^*}}_{p^*,1}}
+\norme{\partial_xz}_{L^\infty}\norme{w^h}_{\dot{\mathbb{B}}^{\frac32}_{2,1}}\Bigr)\cdotp
\end{eqnarray}
Next, taking advantage of Lemma \ref{Commutateur1}, we see that
if $j'\geq J_0$ and $|j-j'|\leq 4,$ then we have 
$$
2^{js} \|[\ddj,\dot S_{j'-1}w]\d_x\dot\Delta_{j'}z\|_{L^2}
\lesssim \|\d_x\dot S_{j'-1}w\|_{L^\infty} \, 2^{j'(s-1)}\|\d_x\dot\Delta_{j'}z\|_{L^2}
$$
while, if $j'<J_0,$ $j\geq J_0$ and $|j-j'|\leq 4,$ 
$$2^{js} \|[\ddj,\dot S_{j'-1}w]\d_x\dot\Delta_{j'}z\|_{L^2}
\lesssim 2^{-\frac{j'}{p^*}}\|\d_x\dot S_{j'-1}w\|_{L^{p^*}} \, 2^{j'(s-\frac12-\frac1p)}\|\d_x\dot\Delta_{j'}z\|_{L^p}.
$$
Therefore, 
\begin{equation}\label{eq:Rj2}
\sum_{j\geq J_0}\left(2^{js}\norme{\cR_j^2}_{L^2}\right)\leq 
C\Bigl(\norme{\partial_xw}_{L^\infty}\norme{\partial_xz}^h_{\dot{\mathbb{B}}^{s-1}_{2,1}}
+ \norme{\partial_xz}^\ell_{\dot{\mathbb{B}}^{s-\frac12-\frac{1}{p}}_{p,1}}\norme{\d_xw}^\ell_{\dot{\mathbb{B}}^{-\frac{1}{p^*}}_{p^*,1}}\Bigr)\cdotp
\end{equation}
Finally, for all $j\geq J_0$ and $|j'-j|\leq 1,$ we have
$$\begin{aligned}
2^{js}\|(\dot{S}_{j'-1}w-\dot{S}_{j-1}w)\dot{\Delta}_j\dot{\Delta}_{j'}\partial_xz\|_{L^2}
&\leq 2^j\|\dot\Delta_{j\pm1} w\|_{L^\infty}\, 2^{j(s-1)}\|\d_x\dot\Delta_{j'}\ddj z\|_{L^2}\\
&\leq C\|\dot\Delta_{j\pm1} \d_xw\|_{L^\infty}\, 2^{j(s-1)}\|\d_x\ddj z\|_{L^2}.\end{aligned}
$$
Hence
\begin{equation}\label{eq:Rj3}
\sum_{j\geq J_0}\left(2^{js}\norme{\cR_j^3}_{L^2}\right)\leq 
C\|\d_xw\|_{L^\infty}\|\d_xz\|^h_{\dot\B^{s-1}_{2,1}}.\end{equation}
Putting \eqref{eq:Rj1a}, \eqref{eq:Rj1b}, \eqref{eq:Rj2} and \eqref{eq:Rj3} together completes
the proof.
\end{proof}

\bibliographystyle{plain}
\nocite{*}

\bibliography{Biblio}

\end{document}